\documentclass[11pt]{amsart}
\usepackage[utf8]{inputenc} 	
\usepackage[T1]{fontenc}
\usepackage{lmodern}
\usepackage{yhmath}

\usepackage{amsthm,amsmath,amssymb,amsbsy,bbm,mathrsfs,supertabular,eurosym,graphicx,enumitem,xcolor}
\usepackage{mathtools}
\usepackage{leftidx} 
\usepackage{scalerel}

\newtheoremstyle{BBstyle0}  {}{}{\itshape}{}{\bfseries}{}{6pt}{}
\newtheoremstyle{BBstyle1}  {3pt}{3pt}{\rmfamily}{}{\itshape}{: }{3pt}{}
\newtheoremstyle{BBstyle2}  {3pt}{3pt}{\itshape}{}{\bfseries\large}{}{0pt}{}
\newtheoremstyle{BBstyle3}  {}{}{\itshape}{}{\bfseries}{: }{3pt}{}
\newtheoremstyle{BBstyle4}  {}{}{\rmfamily}{}{\bfseries}{}{6pt}{}
  
\usepackage[normalem]{ulem} 

\usepackage[authoryear]{natbib}
\usepackage[hidelinks]{hyperref}  

\newcommand{\pa}[1]{\left({#1}\right)}
\newcommand{\norm}[1]{\left\|{#1}\right\|}
\newcommand{\cro}[1]{\left[{#1}\right]}
\newcommand{\ab}[1]{\left|{#1}\right|}
\newcommand{\ac}[1]{\left\{{#1}\right\}}

\newcommand{\argmin}{\mathop{\rm argmin}}

\newcommand{\dfleche}[1]{\,\displaystyle{\mathop{\longrightarrow}_{#1}}\,}

\newcommand{\CV}[1]{\dfleche{#1}}

\newcommand{\E}{{\mathbb{E}}}
 
\renewcommand{\L}{{\mathbb{L}}}
\newcommand{\N}{{\mathbb{N}}}
\renewcommand{\P}{{\mathbb{P}}}
 
\newcommand{\R}{{\mathbb{R}}}
\renewcommand{\S}{{\mathbb{S}}}

\renewcommand{\H}{{\mathbb{H}}}

\newcommand{\sA}{{\mathscr{A}}}
\newcommand{\sB}{{\mathscr{B}}}
\newcommand{\sC}{{\mathscr{C}}}

\newcommand{\sL}{{\mathscr{L}}} 
\newcommand{\sM}{{\mathscr{M}}}

\newcommand{\sW}{{\mathscr{W}}}

\newcommand{\sY}{{\mathscr{Y}}}

\newcommand{\frc}{{\mathfrak{c}}}

\newcommand{\frB}{{\mathfrak{B}}}

\newcommand{\frN}{{\mathfrak{N}}}

\DeclareMathAlphabet{\mathscrbf}{OMS}{mdugm}{b}{n}

\newcommand{\sbB}{{\mathscrbf{B}}}

\newcommand{\sbM}{{\mathscrbf{M}}}

\newcommand{\sbY}{{\mathscrbf{Y}}}

\newcommand{\cA}{{\mathcal{A}}}
\newcommand{\cB}{{\mathcal{B}}}
\newcommand{\cC}{{\mathcal{C}}}

\newcommand{\cE}{{\mathcal{E}}}
\newcommand{\cF}{{\mathcal{F}}}

\newcommand{\cJ}{{\mathcal{J}}}

\newcommand{\cM}{{\mathcal{M}}}

\newcommand{\cX}{{\mathcal{X}}}
\newcommand{\cY}{{\mathcal{Y}}}

\newcommand{\gp}{{\mathbf{p}}}

\newcommand{\gs}{{\mathbf{s}}}

\newcommand{\gx}{{\mathbf{x}}}

\newcommand{\gE}{{\mathbf{E}}}

\newcommand{\gH}{{\mathbf{H}}}

\newcommand{\gP}{{\mathbf{P}}}

\newcommand{\gT}{{\mathbf{T}}}

\newcommand{\bs}[1]{\boldsymbol{#1}}

\newcommand{\bsX}{{\bs{X}}}
\newcommand{\bsY}{{\bs{Y}}} 
\newcommand{\bsZ}{{\bs{Z}}}
\newcommand{\gmu}{{\bs{\mu}}}
\newcommand{\gnu}{{\bs{\nu}}}

\newcommand{\gpi}{{\bs{\pi}}}

\newcommand{\gtheta}{{\bs{\theta}}}

\newlist{lista}{enumerate}{2}
\setlist[lista,1]{label=\alph*),ref=\alph*)}

\newlist{listi}{enumerate}{1}
\setlist[listi,1]{label=(\roman*),ref=(\roman*),align=left}

\newlist{lists}{enumerate}{3}
\setlist[lists,1]{label=$\blacktriangleright$,align=right}
\setlist[lists,2]{label=$\bullet$,align=right}
\setlist[lists,3]{label=$\star$,align=right}

\newcommand{\eref}[1]{(\ref{#1})}

\renewcommand{\ge}{\geqslant}
\renewcommand{\le}{\leqslant}
\newcommand{\1}{1\hskip-2.6pt{\rm l}}

\newcommand{\scal}[2]{\left\langle #1,#2\right\rangle}

\newcommand{\etc}[1]{#1_1,\ldots,#1_n}

\newcommand{\dps}[1]{\displaystyle{#1}}

\newcommand{\et}{^{\star}}
\newcommand{\map}[5]{
\begin{array}{l|rcl}
#1: & #2 & \longrightarrow & #3 \\
    & #4 & \longmapsto & #5 
\end{array}
    }

\newcommand{\co}[1]{\leftidx{^\mathsf{c}}{\!{#1}}{}} 

\DeclarePairedDelimiter\ceil{\lceil}{\rceil}

\newcommand{\PES}[1]{\ceil*{#1}}

\newtheorem{thm}{Theorem}
\newtheorem{lem}{Lemma}
\newtheorem{prop}{Proposition}

\newtheorem{cor}{Corollary}
\newtheorem{ass}{Assumption}

\theoremstyle{definition}
\newtheorem{exa}{Example}

\usepackage[english]{babel}

\def\ray{{r}}

\begin{document}
\title[Randomised estimators]{Statistical Inference via T-Posterior Randomised Estimators}
\author{Yannick BARAUD}
\author{Gabriel ROMON}
\address{\parbox{\linewidth}{Department of Mathematics,\\
University of Luxembourg\\
Maison du nombre\\
6 avenue de la Fonte\\
L-4364 Esch-sur-Alzette\\
Grand Duchy of Luxembourg}}
\email{yannick.baraud@uni.lu}
\keywords{Estimation -- Bayes procedure -- Posterior distribution --  Gibbs estimator -- Robustness -- Hellinger distance -- Poisson process.}
\subjclass{Primary 62G05, 62G35, 62F35, 62F15}
\date{\today}

\begin{abstract}
Given a statistical model, we propose a novel estimation method that yields randomised estimators for the unknown distribution of an observed random variable. We establish non-asymptotic bounds for the performance of these estimators and demonstrate their robustness to potential model misspecification. Notably, these properties are established by circumventing the use of concentration inequalities and empirical process theory. We provide an illustration of this approach to the problem of estimating the intensity of a Poisson process.
\end{abstract}

\maketitle

\section{Introduction}
The aim of this paper is to propose a new approach for estimating the distribution $\gP\et$ of an observed random variable $\bsX$. Given a parameterised family $\sbM=\ac{\gP_{\theta},\; \theta\in \Theta}$ of candidate probability distributions for $\gP\et$, our aim is to propose an estimation strategy that yields an estimator $\widehat \theta$ whose values belong to $\Theta$ and for which $\gP\et$ and $\gP_{\smash{\widehat \theta}}$ are as close as possible, in a sense that we shall specify later. Unlike the classical frequentist approach, in which the estimator is a deterministic function of the data $\bsX$, ours is drawn  over the parameter space $\Theta$ from a distribution $\gpi_{\bsX}$ that depends on $\bsX$. This distribution plays the same role as the posterior one in the Bayesian paradigm. From this perspective, our approach can be interpreted as a randomised estimation procedure, or a Bayes-like one, and we shall see that it offers certain advantages over the frequentist approach. 

To design our random estimator, or equivalently the distribution $\gpi_{\bsX}$ that generates it, we need two main elements.

Firstly, we need a loss function $\ell$ on the parameter space $\Theta$ as well as a test statistic $\gT(\bsX,\theta,\theta')$ between $\theta$ and $\theta'$ that allows us to compare two candidate parameters $\theta,\theta'$ in $\Theta$. When $\gP\et=\gP_{\theta\et}$ belongs to our model $\sbM$ and $\theta$ is much closer to $\theta\et$ than to $\theta'$ (relative to our loss $\ell$), we expect our test statistic $\gT(\bsX,\theta,\theta')$ to take very negative values. Since we are considering test statistics that satisfy $\gT(\bsX,\theta,\theta') = -\gT(\bsX,\theta',\theta)$ for all $(\theta,\theta')\in \Theta^{2}$, $\gT(\bsX,\theta,\theta')$ is automatically large in the opposite situation where $\theta'$ is much closer to $\theta\et$ than to $\theta$. When these parameters are either both close to or both far from $\theta\et$, the choice between one or the other becomes unimportant, as does the value of $\gT(\bsX,\theta,\theta')$. The statistic $\gT(\bsX,\theta,\theta')$ can be regarded as an estimator of the difference $\ell(\theta\et,\theta)-\ell(\theta\et,\theta')$ and we shall sometimes refer to this interpretation when describing certain heuristics.

Our second ingredient is a prior distribution $\gpi$ over the parameter space $\Theta$. This prior  (for short) will prove to be a practical tool for assessing the complexity of the parameter space, and more specifically, of an element $\theta$ within it.
 We shall use it to replace other notions of complexity such as VC-dimensions, entropies, Rademacher complexities, amongst others. Even though the latter notions are commonly used in the frequentist paradigm, they may also be difficult to evaluate in general.

To explain the heuristic that underlies our approach, let us assume for a moment that our test statistic $\gT(\bsX,\theta,\theta')$ is the ideal quantity $\ell(\theta\et,\theta)-\ell(\theta\et,\theta')$. For $\theta\in \Theta$, let us introduce 
\[
\gT(\bsX,\theta)=\int_{\Theta}\gT(\bsX,\theta,\theta')\;d\widetilde \gpi_{\bsX,\theta}(\theta')\quad \text{where}\quad \frac{d\widetilde \gpi_{\bsX,\theta}}{d\gpi}(\theta')=\frac{\exp\left(\gT(\bsX,\theta,\theta')\right)} {\int_{\Theta} \exp\left(\gT(\bsX,\theta,\theta')\right) d\gpi(\theta')}
\]
and define the distribution $\gpi_{\bsX}$ as 
%
\begin{equation}\label{def-piX-intro}
\frac{d\gpi_{\bsX}}{d\gpi}(\theta)=\frac{\exp\cro{-\gT(\bsX,\theta)}}{\int_{\Theta}\exp\cro{-\gT(\bsX,\theta)}d\gpi(\theta)}.
\end{equation}
For $\theta\in \Theta$, we note that $\widetilde \gpi_{\bsX,\theta}$ is a Gibbs measure associated with the function $\theta'\mapsto \gT(\bsX,\theta,\theta')$. Our interest in this Gibbs measure lies in the fact that it concentrates its mass around the values of $\theta'$ for which $\gT(\bsX,\theta,\theta')$ is large. Since in our case $\gT(\bsX,\theta,\theta')=\ell(\theta\et,\theta)-\ell(\theta\et,\theta')$, $\widetilde \gpi_{\bsX,\theta}$ concentrates its mass in a neighbourhood of $\theta\et$. 
Furthermore, for this specific test statistic, the density of $\widetilde \gpi_{\bsX,\theta}$ takes the form 
\[
\frac{d\widetilde \gpi_{\bsX,\theta}}{d\gpi}(\theta')=\frac{\exp\cro{-\ell(\theta\et,\theta')}}{\int_{\Theta}\exp\cro{-\ell(\theta\et,\theta')}d\gpi(\theta')}
\]
and is therefore independent of $\theta$. As a consequence, the statistic 
\begin{align*}
\gT(\bsX,\theta)=\int_{\Theta}\gT(\bsX,\theta,\theta')d\widetilde \gpi_{\bsX,\theta}(\theta')=\int_{\Theta}\pa{\ell(\theta\et,\theta)-\ell\pa{\theta\et,\theta'}}d\widetilde \gpi_{\bsX,\theta}(\theta')
\end {align*}
is of the form $\gT(\bsX,\theta)=\ell(\theta\et,\theta)+C$ for some constant $C\in\R$ and we observe that $\gpi_{\bsX}$ is also a Gibbs measure, but associated with $-\gT(\bsX,\theta)=-\ell(\theta\et,\theta)-C$ since its density takes the form
\[
\frac{d\gpi_{\bsX}}{d\gpi}(\theta)=\frac{\exp\cro{-\ell(\theta\et,\theta)-C}}{\int_{\Theta}\exp\cro{-\ell(\theta\et,\theta')-C}d\gpi(\theta')}=\frac{\exp\cro{-\ell(\theta\et,\theta)}}{\int_{\Theta}\exp\cro{-\ell(\theta\et,\theta')}d\gpi(\theta')}.
\]
The distribution $\gpi_{\bsX}$ therefore concentrates its mass around these $\theta$ for which $\ell(\theta\et,\theta)$ is small and a random variable $\widehat \theta$ with distribution $\gpi_{\bsX}$ is likely to lie close to $\theta\et$. In order to be more precise and derive a risk bound, at least in the simple situation where the parameter space $\Theta$ is finite or countable,  let us first observe that  
\begin{equation}\label{eq-intro0}
\int_{\Theta}\exp\cro{\ell(\theta\et,\theta)}d\gpi_{\bsX}(\theta)=
\frac{\gpi(\Theta)}{\int_{\Theta}\exp\cro{-\ell(\theta\et,\theta')}d\gpi(\theta')}=\frac{1}{\int_{\Theta}\exp\cro{-\ell(\theta\et,\theta')}d\gpi(\theta')}.
\end{equation}
Using \eref{eq-intro0} and the concavity property of the logarithm, we obtain that
\begin{align*}
\int_{\Theta}\ell(\theta\et,\theta)d\gpi_{\bsX}(\theta)&=\int_{\Theta}\log\exp\cro{\ell(\theta\et,\theta)}d\gpi_{\bsX}(\theta)\le \log \int_{\Theta}\exp\cro{\ell(\theta\et,\theta)}d\gpi_{\bsX}(\theta)\\
&=-\log \int_{\Theta}\exp\cro{-\ell(\theta\et,\theta)}d\gpi(\theta)=-\log\cro{ \sum_{\theta\in \Theta}\exp\cro{- \ell(\theta\et,\theta)}\gpi(\theta)}\\
&\le \inf_{\theta\in \Theta}\cro{\ell(\theta\et,\theta)+\log\pa{\frac{1}{\gpi(\theta)}}}.
\end{align*}
This bound can be interpreted as 
\[
\E\cro{\ell(\theta\et,\widehat \theta)|\bsX}\le \inf_{\theta\in \Theta}\cro{\ell(\theta\et,\theta)+\log\frac{1}{\gpi(\theta)}}
\]
when $\widehat \theta$ has a distribution $\gpi_{\bsX}$ conditionally on $\bsX$. Taking the expectation on both sides with respect to $\bsX$, we obtain the risk bound 
\begin{equation}\label{eq-intro-risk}
\E\cro{\ell(\theta\et,\widehat \theta)} \le \inf_{\theta\in \Theta} \cro{\ell(\theta\et,\theta) + \log\pa{\frac{1}{\gpi(\theta)}}}.
\end{equation}
We observe that it decomposes into two terms. The second term $\log\pa{1/\gpi(\theta)}$ depends solely on our choice of the prior distribution.
 If the parameter space $\Theta$ is finite and has cardinality $N$, we can take $\gpi(\theta)=1/N$, in which case the risk depends logarithmically on $N$ and the quantity $\log N$ can be interpreted as the complexity of our parameter space. Nevertheless, it may sometimes be more sensible to choose a prior distribution $\gpi$ that favours certain parameters over others, and $\log\pa{1/\gpi(\theta)}$ then corresponds to a complexity value associated with the parameter $\theta$. The first term on the right-hand side of \eref{eq-intro-risk} is an approximation term. It measures the quality of the approximation of $\theta\et$ by an element with complexity $\log\pa{1/\gpi(\theta)}$. The risk bound provides the best compromise between approximation and complexity amongst the elements of the parameter space.

Since $\theta\et$ is unknown, it is unfortunately impossible to take $\gT(\bsX,\theta,\theta')=\ell(\theta\et,\theta)-\ell(\theta\et,\theta')$ as we did in the heuristic argument that we presented above. The main idea is therefore to replace $\ell(\theta\et,\theta)-\ell(\theta\et,\theta')$ with a suitable estimator of it. 

In this paper, we shall see how it is possible to establish risk bounds under an appropriate control of the Laplace transform of a certain linear combination of the test statistics $\gT(\bsX,\theta,\theta')$, namely that of the random variable $\Delta_{\kappa}(\bsX,\theta,\theta',\theta'')=\kappa \gT(\bsX,\theta'' ,\theta')-\gT(\bsX,\theta'',\theta)$ where $\kappa$ denotes a positive number. Interestingly, it is sufficient to establish this control for fixed values of $\theta,\theta',\theta''$  in $\Theta$. In contrast, a frequentist approach based on minimising a criterion ${\rm crit}(\bsX,\cdot)$ over the parameter space would generally require a uniform control of the difference ${\rm crit}(\bsX,\theta)-\E\cro{{\rm crit} (\bsX,\theta)}$ over $\Theta$ to establish such risk bounds. These controls generally rely on concentration inequalities, which can be quite difficult to establish when the data are not independent. We refer the reader to Koltchinskii~\citeyearpar{Koltchinski} and Massart~\citeyearpar{MR1813803}, amongst other references, for the crucial roles that concentration inequalities play in statistical estimation and statistical learning in the frequentist paradigm. Our approach entirely circumvents these difficulties.

The test statistics we consider here have the property of yielding robust tests between two probabilities. This means that the initial assumption that $\gP\et=\gP_{\theta\et}$ belongs to $\sbM$ can be relaxed.
 The value of $\gT(\bsX,\theta,\theta')$ will in fact depend on the proximity (in a sense to be specified) of $\gP\et$ relative to $\gP_{\theta}$ and $\gP_{\theta'}$. The robustness of these statistics, combined with the fact that we only need to control the Laplace transform of $\Delta_{\kappa}(\bsX,\theta,\theta',\theta'')$, allows us to establish these risk bounds under weak assumptions on $\gP\et$, in particular not only in situations where $\bsX=(\etc{X})$ consists of independent data.

\subsection{Randomised versus non-randomised estimators}
Constructing a distribution over the parameter space from data for estimation purposes is not a new idea. 
It has been used successfully for decades in Bayesian statistics. Indeed, in the special case where $\sbM=\{\gp_{\theta}\cdot\gmu,\; \theta\in \Theta\}$ is a dominated parametric model and 
\[
\gT(\bsX,\theta,\theta')=\log\gp_{\theta'}(\bsX)-\log\gp_{\theta}(\bsX)
\]
is the difference between the log-likelihood functions, the distribution $\gpi_{\bsX}$ given by \eref{def-piX-intro} coincides with the classical Bayesian posterior distribution $\gpi_{\bsX}^{B}$. From this perspective, our approach can be regarded as a generalisation of the Bayesian approach. In this paradigm, the authors primarily study the contraction properties of the posterior distribution when the model is well-specified or, at least, when the Kullback--Leibler divergence between $\gP\et$ and a suitable element $\gP_{\theta\et}$ in the statistical model $\sbM$ is sufficiently small. We refer readers to 
Ghosal and van der Vaart~\citeyearpar{MR1790007,MR3587782}, Birg\'e~\citeyearpar{Birg__2015}, Castillo~\citeyearpar{MR4864217} amongst other references. It follows that the Bayesian approach has been shown to possess good estimation properties when the true data distribution $\gP\et=\gP_{\theta\et}$ belongs to the statistical model $\sbM$. The authors mentioned above show that, under appropriate assumptions on $\gpi$, the posterior distribution $\gpi_{\bsX}^{B}$ concentrates most of its mass on small neighbourhoods of $\theta\et$, except perhaps when $\bsX$ belongs to an unlikely set $\Omega'$ of configurations. In other words, except on a set $\Omega'$ of low probability, a random variable $\widehat \theta$ drawn from the distribution $\gpi_{\bsX}^{B}$ is close to $\theta\et$ with a probability close to 1.

Modifications of the Bayesian posterior distribution have also been proposed in the literature. We mention only a few of them. Chernozhukov and Hong~\citeyearpar{MR1984779} studied Laplace-type estimators. The results established therein are mainly asymptotic. This is not the case for Atchad\'e~\citeyearpar{MR3718168}, who studied the contraction properties of quasi-likelihoods for the estimation problem in sparse parameter spaces.

Under appropriate assumptions regarding the statistical model and $\gpi$, the Bernstein-von Mises theorem bridges the gap between the Bayesian approach and the well-known frequentist one based on maximum likelihood. Nevertheless, this result does not imply that there are no fundamental differences between the Bayesian and frequentist approaches.
 For example, if we observe $n$ i.i.d. data points drawn from a shifted density $p(\cdot-\theta\et)$ that is positive and unbounded on $\R$, the maximum likelihood estimator will not exist, whereas it is always possible to estimate the location parameter $\theta\et$ using a Bayesian approach. This simple example already illustrates the fact that there may be certain advantages to using randomised estimators rather than designing one by optimising a criterion.

Randomised estimators have also been introduced into statistical learning, notably thanks to the pioneering work of Olivier Catoni. The theory we develop here is closely related to his. In particular, Catoni was, to our knowledge, the first to introduce Gibbs estimators as an alternative to the classical empirical risk minimiser. His idea is to replace the minimisation of an empirical risk $\theta\mapsto {\rm crit}(\bsX,\theta)$ over the parameter space $\Theta$ with the random selection $\widehat \theta$ of a parameter $\theta\in \Theta$ with a Gibbs distribution 
\[
\frac{dP_{\beta}}{d\gpi}(\theta)=\frac{\exp\cro{-\beta {\rm crit}(\bsX,\theta)}}{\int_{\Theta}\exp\cro{-\beta {\rm crit}(\bsX,\theta)}d\gpi(\theta)}\quad \text{with $\beta>0$.}
\]
Catoni obtained oracle-type inequalities for $\widehat \theta$ for appropriate values of $\beta$. We refer the reader to Catoni~\citeyearpar{Catoni04}. Gibbs posterior distributions were also used by Jiang and Tanner~\citeyearpar{MR2458185} to address the problem of variable selection from a Bayesian perspective. 

Catoni also studied the risk of more general randomised estimators for which he established PAC  Bayesian bounds. His approach was used in Catoni~\citeyearpar{MR2483528} for classification purposes, in Audibert and Catoni~\citeyearpar{audibert2011linear} to estimate a regression function and some generalisations of Catoni's approach are available in Alquier~\citeyearpar{MR2483458}. We also mention that Bhattacharya et al.~\citeyearpar{MR3909926} studied the properties of the posterior distribution based on fractional likelihood. Certain PAC Bayesian  bounds were established there for the $\alpha$-R\'enyi divergence loss.
  
\subsection{Robustness}
A common undesirable feature of the Bayesian approach and some frequentist ones, typically based on the minimisation of a contrast function like the least-squares or the likelihood, is their lack of stability under misspecification. The resulting estimators may perform well when the true distribution $\gP\et$ of the data belongs to the statistical model but may also perform very poorly when this condition is not met, even when $\gP\et$ is very close to $\sbM$. The problem of robustness in statistics is a major one as statisticians often use statistical models which are approximations of reality. 

In the frequentist paradigm, this problem has been known for a long time and the search for robust estimators has led to numerous papers especially in the 1960s and 1970s. The reader may find an account of this research in Huber~\citeyearpar{MR606374}. It was tackled in Birg\'e~\citeyearpar{MR552295,MR707532,MR692785} with $T$-estimators and more recently in a series of papers based on $\rho$-estimation by Baraud et al~\citeyearpar{MR3595933}, Baraud and Birg\'e~\citeyearpar{MR3565484,BarBir2018}, Baraud and Chen~\citeyearpar{MR4725162}, Chen~\citeyearpar{MR4699568,MR4739374,MR5023015}, Sart~\citeyearpar{sart2016,MR4255173}.  

We are not aware of many examples of robust procedures in the Bayesian paradigm, at least as soon as the distance between $\gP\et$ and $\sbM$ is small enough in the Hellinger or total variation distance. The only exceptions we are aware of are Baraud and Birg\'e~~\citeyearpar{BarBir2020} and Baraud~\citeyearpar{baraud2021robust} which are, however, limited to density estimation. 

\subsection{What is new here?}
The aim of this article is to propose an alternative approach to $\rho$-estimation in order to construct optimal and robust estimators in statistics. The randomised approach we develop here offers certain theoretical and computational advantages over $\rho$-estimators. We believe they may be easier to implement, at least when the parameter space is not too complex, as they require the simulation of random variables following a given distribution $\gpi_{\bsX}$, whereas the calculation of $\rho$-estimators requires the optimisation of a criterion. As already mentioned, another motivation for these randomised estimators lies in the fact that the complexity of a parameter space equipped with a prior can be much easier to assess in the Bayesian paradigm than in the frequentist one.  This property facilitates the analysis of their risks.

The statistical method we describe here is intended to be applied across various statistical frameworks in subsequent work. In the present paper, we illustrate it by considering the problem of estimating the intensity of a Poisson process, possibly in the presence of covariates. 

Without covariates, this problem was studied by Reynaud-Bouret~\citeyearpar{MR1981635} under the $\L_{2}$-loss, and by Birg\'e~\citeyearpar{Birge-Poisson} using a Hellinger-type loss. Sart~\citeyearpar{MR3394490} extended Birg\'e's work to the setting where the intensity depends on covariates. Assuming that the target intensity is square-integrable, Reynaud-Bouret analysed an estimator defined as the minimiser of a penalised $\L_{2}$-criterion, and derived risk bounds using suitable concentration inequalities. Birg\'e~\citeyearpar{Birge-Poisson} and Sart~\citeyearpar{MR3394490} proposed an alternative approach based on $T$-estimators. These $T$-estimators are constructed from robust tests comparing the elements of a suitable discretisation of the parameter space.

A common feature of all these papers is the assumption that the observed random variable $\bsX$ is an exact realisation of a Poisson process. We do not make  this assumption here, as it is rather strong and cannot be verified in practice. Instead, we assume that $\bsX$ coincides with an unobserved Poisson process $\bsX\et$, up to the addition or removal of a small number of points. In this sense, the observed process $\bsX$ may be viewed as a corrupted version of the ideal dataset $\bsX\et$.

\subsection{Organisation of the paper and notation}
The statistical setting is presented in Section~\ref{sct-setting}, while Section~\ref{sect-def-r} introduces our measure of complexity for a parameter space -- or, more precisely, the measure of the complexity of a parameter space at a specific point. In this section, we provide several examples and establish connections with both the classical dimension of linear spaces and the entropy of more general ones. We define our posterior distribution in Section~\ref{sect-post}, where we also present our main result regarding the performance of our randomised estimator. Section~\ref{sect-Poisson} is devoted to the estimation of Poisson process intensities. The proof of our main theorem is given in Section~\ref{sect-PThm1}, whilst Section~\ref{sect-proof} contains all remaining proofs.

We now introduce our primary notation. The random variable $\bsX$ is defined on a measurable space $(\Omega,\cC)$, $\P$  is  the probability on $(\Omega,\cC)$ under which $\bsX$ has distribution $\gP\et$ and $\E$ denotes the corresponding expectation. The cardinality of a set $B$ is denoted by $|B|$ while its complement is $\co{B}$. The ceiling of a nonnegative number $x$ (the smallest integer greater than or equal to  $x$) is denoted by $\PES{x}$ and for $y\ge 0$, $x\vee y$, $x\wedge y$ are the maximum and the minimum between $x$ and $y$ respectively.  We write $\log_{+}(x)$ for $(\log x)\vee 1$ whilst $\log_{2}x$ denotes the logarithm of $x$ in base 2. Unless otherwise specified, $a/0=+\infty$ for every $a>0$. The Euclidean norm of $\R^{k}$ is denoted by $\ab{\cdot}$ without any reference to the dimension $k$. The letter $C$ denotes a positive numerical constant the value of which may change from line to line whilst $C(a)$ specifies its dependency with respect to a parameter $a$. Throughout the paper, we shall employ the functions $\psi$ and $\phi$ defined as
\begin{equation}\label{def-psi}
\map{\psi}{[0,+\infty]}{[-1,1]}{u}{
\dps{\frac{u-1}{u+1}}
}
%
%
\quad \text{and}\quad \map{\phi}{\R}{[0,+\infty)}{u}{
\dps{\frac{e^{u}-1-u}{u^{2}/2}}
}
\end{equation}
with the conventions $\psi(+\infty)=1=\phi(0)$ and $0/0=1$.
The function $\phi$ is increasing on $\R$ and we shall repeatedly use the fact that if $Z$ is a square integrable random variable not larger than $b>0$,
\begin{equation}\label{eq-debase}
\E\cro{e^{Z}}=\E\cro{1+Z+\phi(Z)\frac{Z^{2}}{2}}\le \E\cro{1+Z+\phi(b)\frac{Z^{2}}{2}}\le \exp\cro{\E(Z)+\frac{\phi(b)}{2}\E(Z^{2})}.
\end{equation}
Finally, all the measures that we consider in this paper are assumed to be positive and $\sigma$-finite.

\section{The statistical setting}\label{sct-setting}
As previously mentioned, our goal is to estimate the unknown distribution $\gP\et$ of an observed random variable $\bsX$ 
defined 
between the measurable spaces $(\Omega,\cC)$ and $(\gE,\mathcal E)$. 
To this end, we introduce certain assumptions regarding $\gP\et$ and design a parameterised statistical model $\sbM=\ac{\gP_{\theta},\; \theta\in \Theta}$ which is intended to approximate $\gP\et$. 
In the case where the model is well-specified and $\gP\et = \gP_{\theta\et}$, 
we focus on estimating the parameter $\theta\et$. To evaluate the accuracy of our estimator, we equip $\Theta$ with a loss function 
$\ell:\Theta^2 \to [0,\infty)$
acting as a distance; that is, an estimator $\widehat \theta$ is considered to perform well if and only if $\ell(\theta\et, \widehat \theta)$ is sufficiently small. In cases where the model is misspecified, $\theta\et$ should be interpreted as the parameter associated with the distribution $\gP_{\theta\et} \in \sbM$ that best approximates $\gP\et$ or provides the best compromise between approximation and complexity as in the right-hand side of \eref{eq-intro-risk}.

The estimators proposed herein are randomised, meaning they are drawn from a distribution $\gpi_{\bsX}$ on $\Theta$ that depends upon the data. We call $\gpi_{\bsX}$  the {\em $T$-posterior distribution} (or {\em $T$-posterior} for short) to emphasise its analogy with the classical Bayesian framework. The letter $T$ refers to {\em Test (statistic)}, as in the $T$-estimators introduced by Birg\'e within the frequentist paradigm.

Throughout this paper, we assume that the randomisation step can be achieved by means of a random variable $U$ (typically uniformly distributed on $[0,1]$) which is independent of $\bsX$ and, more generally, of every random variables built on $(\Omega,\cC)$ that we may consider later on. We may then see our randomised estimator $\widehat \theta=\widehat \theta(\bsX,U)$ as a deterministic function of a new observation $(\bsX,U)$ for which the conditional distribution of $\widehat \theta(\bsX,U)$ given $\bsX$ is $\gpi_{\bsX}$.

We are primarily interested in evaluating the mass of $\gpi_{\bsX}$ on sets of the form $\sB(\theta,r)=\{\theta' \in \Theta, \; \ell(\theta, \theta') \le r\}$ for $\theta \in \Theta$ and $r \ge 0$, which we call $\ell$-balls, with special interest in those centred at $\theta\et$. We treat the parameter space $\Theta$ as a measurable space by equipping it with a $\sigma$-algebra $\frB$ that ensures all such balls are measurable.

It is convenient to establish our results on a set $B$ with positive probability, typically close to 1, which is $\cC$-measurable and therefore independent of the auxiliary random variable $U$. This set corresponds to favourable configurations of $\bsX$ under which our assumptions on $\gP\et$ would hold. For example, if $\bsX$ is an $n$-tuple $(\etc{X})$ the coordinates of which are presumed to be i.i.d., $B$ might represent the set of $\omega \in \Omega$ for which the value of $\bsX(\omega)$ coincides with that of an $n$-sample built on the same measurable space $(\Omega,\cC)$. It is then of interest to examine how the risk of our estimator depends on $\P(B)$ and, consequently, on the validity of our assumption on the $X_{i}$. We denote by $\P_{B}$ and $\E_{B}$ the conditional probability and expectation given $B$. If $\widehat \theta$ is a randomised estimator with distribution $\gpi_{\bsX}$, then for every measurable subset $A$ of $\Theta$:
\begin{equation}\label{eq-cond}
\P_{B}\cro{\widehat \theta \in A} = \E_{B}\cro{\P\cro{\widehat \theta(\bsX,U) \in A | \cC}} = \E_{B}\cro{\P\cro{\widehat \theta(\bsX,U) \in A | \bsX}}=\E_{B}\cro{\gpi_{\bsX}(A)}.
\end{equation}
We shall repeatedly use this equality to link the behaviour of $\widehat \theta$ as an estimator of $\theta\et$ to the properties of $\gpi_{\bsX}$.

\section{The $\mathbf{\pi}$-complexity of the parameter space at a point}\label{sect-def-r}
Throughout this paper, we consider a prior distribution $\gpi$ on $\Theta$. As previously noted in the introduction, this distribution is used to quantify the complexity of the parameter space $\Theta$ at a point $\theta\in \Theta$.

\subsection{Definition}
Given a parameter space $\Theta$ equipped with a prior $\gpi$ and a positive number $\gamma$, we define {\em the $\gpi$-complexity of $\Theta$ at $\theta\in \Theta$} as the smallest nonnegative number $\ray(\gpi,\theta)$ which satisfies 
\begin{align}
0<\gpi\pa{\sB(\theta,2r)}\le \exp\pa{\gamma r}\gpi\pa{\sB(\theta,r)}\quad\text{for every }r\ge \ray(\gpi,\theta).
\label{prop-epsn}
\end{align}
This notion of complexity is not new and has been introduced before in the Bayesian setting for balls based on the Kullback-Leibler divergence (see for example Ghosal et al~\citeyearpar{MR1790007}). However, our definition is close to that given in Birg\'e~\citeyearpar{Birg__2015} for analysing the risk of Bayesian estimators. 

That $\ray(\gpi,\theta)$ is well-defined is the subject of Lemma~\ref{lem-r-well-defined} in Section~\ref{sect-proof}.
The quantity $\ray(\gpi,\theta)$ depends upon the value of $\gamma>0$; however, as $\gamma$ remains a numerical constant throughout our results, we shall henceforth suppress this dependency in our notation. 
We relate the value of $\ray(\gpi,\theta)$ to other classical notions of ``complexity'' in the following examples. 
\subsection{The parametric case}\label{caspara}
Let us start with the simple situation  where $\Theta$ is a convex subset of $\R^{D}$, $D\ge 1$, on which the prior $\gpi$ is equivalent to Lebesgue measure $\nu$. Also assume that the loss $\ell$ can be related to a measurable function $\frN$ on $\R^{D}$ that satisfies for some $\alpha>0$, 
\begin{equation}\label{def-homo}
\frN(av)=a^{\alpha}\frN(v)\quad \text{for every $a>0$ and $v\in\R^{D}$}.
\end{equation}
More precisely, we assume that there exist positive numbers $\underline a,\overline a, \underline b,\overline b$ such that for every measurable subset $A$ of $\Theta$ and $\theta,\theta'\in \Theta$
\begin{equation}\label{eq-Lucien}
\underline a\frN(\theta'-\theta)\le \ell(\theta,\theta')\le \overline a \frN(\theta'-\theta)\quad \text{and}\quad \underline b\nu(A)\le \gpi(A)\le \overline b\nu(A).
\end{equation}
%
The following proposition is a new version of Proposition~10 in Baraud and Birg\'e~\citeyearpar{BarBir2020} which  improves the constant and extends the result to homogeneous functions $\frN$.
\begin{prop}\label{prop-lucien}
Assume that $\Theta$ is a convex subset of $\R^{D}$ and \eref{eq-Lucien} holds for a measurable function $\frN$ satisfying \eref{def-homo}, 
where $\nu$ denotes the Lebesgue measure. Then, for every $\theta\in \Theta$
\begin{equation}\label{eq-r-para}
\ray(\gpi,\theta)\le \overline \kappa_{D} D\le \overline \kappa_{1}D\quad \text{with}\quad \overline \kappa_{D}=\frac{1}{\gamma}\cro{\frac{1}{\alpha}\log\pa{\frac{2\overline a}{\underline a}}+\frac{1}{D}\log\pa{\frac{\overline b}{\underline b}}}.
\end{equation}
\end{prop}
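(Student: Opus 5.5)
The plan is to compare the two balls $\sB(\theta,2r)$ and $\sB(\theta,r)$ through Lebesgue measure, using the homogeneity \eref{def-homo} of $\frN$ and the convexity of $\Theta$. I would show the following, valid for every $r>0$:
\begin{equation*}
\gpi\pa{\sB(\theta,2r)}\le \frac{\overline b}{\underline b}\pa{\frac{2\overline a}{\underline a}}^{D/\alpha}\gpi\pa{\sB(\theta,r)}=\exp\pa{\gamma\overline \kappa_{D}D}\gpi\pa{\sB(\theta,r)}.
\end{equation*}
This gives \eref{prop-epsn} with $e^{\gamma r}$ on the right as soon as $r\ge \overline\kappa_{D}D$. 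The second inequality in \eref{eq-r-para} is immediate: $1/D\le 1$ and $\log(\overline b/\underline b)\ge 0$, since necessarily $\underline b\le \overline b$.

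For $s>0$, set $K(s)=\{v\in\R^{D},\;\frN(v)\le s\}$. By \eref{def-homo}, $K(s)=s^{1/\alpha}K(1)$. By \eref{eq-Lucien}, with $\Theta_{\theta}=\Theta-\theta$,
\begin{equation*}
\sB(\theta,2r)\subset \theta+\pa{\Theta_{\theta}\cap K(2r/\underline a)}\quad\text{and}\quad \sB(\theta,r)\supset \theta+\pa{\Theta_{\theta}\cap K(r/\overline a)}.
\end{equation*}
Let $t=(2\overline a/\underline a)^{1/\alpha}$, which is at least $1$ because $\underline a\le\overline a$. The key step is the inclusion
\begin{equation*}
t^{-1}\pa{\Theta_{\theta}\cap K(2r/\underline a)}\subset \Theta_{\theta}\cap K(r/\overline a).
\end{equation*}
It holds for two reasons. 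First, $\Theta_{\theta}$ is convex and contains $0$, so it is star-shaped at $0$ and stable under multiplication by $t^{-1}\in(0,1]$. Second, if $\frN(v)\le 2r/\underline a$, then $\frN(v/t)=t^{-\alpha}\frN(v)\le r/\overline a$.

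Lebesgue measure scales as $\nu(t^{-1}A)=t^{-D}\nu(A)$ and is translation invariant. Combining this with the two-sided comparison between $\gpi$ and $\nu$ in \eref{eq-Lucien} gives
\begin{equation*}
\gpi(\sB(\theta,2r))\le\overline b\, t^{D}\,\nu\pa{\sB(\theta,r)}\le (\overline b/\underline b)\,t^{D}\,\gpi(\sB(\theta,r)),
\end{equation*}
which is the displayed bound. Measurability of the sets involved follows from the measurability of $\frN$, or from the standing assumption on $\frB$.

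It remains to check the positivity requirement $\gpi(\sB(\theta,2r))>0$, which I expect to be the only slightly delicate point. Since $\frN$ is finite-valued, $\Theta=\bigcup_{k}\sB(\theta,2^{k}r)$, and $\gpi(\Theta)>0$. Hence $\gpi(\sB(\theta,2^{k}r))>0$ for some $k$. Iterating the bound above $k$ times yields $\gpi(\sB(\theta,r))\ge c^{k}\,\gpi(\sB(\theta,2^{k}r))>0$ with $c=\exp(-\gamma\overline\kappa_{D}D)$. The same argument applied at radius $2r$ gives $\gpi(\sB(\theta,2r))>0$, so \eref{prop-epsn} holds for all $r\ge\overline\kappa_{D}D$. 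Hence $\ray(\gpi,\theta)\le \overline\kappa_{D}D$.
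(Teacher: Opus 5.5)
Your proof is correct and follows essentially the same route as the paper's. The paper also sandwiches $\sB(\theta,r)$ and $\sB(\theta,2r)$ between sets of the form $\Theta\cap\{\theta+r_i v,\ \frN(v)\le 1\}$. It then maps the larger set into the smaller one with the homothety $x\mapsto\theta+(r_1/r_2)(x-\theta)$, which stays in $\Theta$ by convexity, and compares Lebesgue measures through the Jacobian $(r_1/r_2)^D$. Positivity is obtained, as in your argument, by iterating the doubling bound along $\Theta=\bigcup_j\sB(\theta,2^jr)$.
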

The proof is postponed to Section~\ref{prf-propL}. 

When the loss function is equivalent to the power of a norm on $\Theta\subset \R^{D}$ and $\gpi$ has a density with respect to the Lebesgue measure which is bounded away from 0 and infinity, the $\gpi$-complexity $\ray(\gpi,\theta)$ of $\Theta$ at $\theta$ is  bounded (independently of $\theta$) by the dimension $D$ of the ambient space $\R^{D}$. 


\subsection{A bound based on a critical radius}
Given $\theta\in \Theta$, a convenient way to control the size of the quantity $r(\gpi,\theta)$ is to assume that the prior $\gpi$ puts ``enough mass'' on an $\ell$-ball centred at $\theta$. We measure it by means of $\overline r(\gpi,\theta)$ which is the smallest nonnegative number  that satisfies $\gpi\pa{\sB(\theta,r)}\ge e^{-\gamma r}$, hence 
%
\begin{equation}\label{eq-rbar}
\overline r(\gpi,\theta)=\inf\ac{r\ge 0,\; e^{\gamma r}\gpi\pa{\sB(\theta,r)}\ge 1}.
\end{equation}
The quantity $\overline r(\gpi,\theta)$ satisfies the inequality $\gpi\pa{\sB(\theta,r)}\ge e^{-\gamma r}$ and all the values of $r\ge \overline r(\gpi,\theta)$ as well. We shall call it {\em the $\gpi$-critical radius at $\theta$}. Since for every $r\ge \overline r(\gpi,\theta)$, 
\[
\frac{\gpi\pa{\sB(\theta,2r)}}{\gpi\pa{\sB(\theta,r)}}\le \frac{1}{\gpi\pa{\sB(\theta,r)}}\le \exp\pa{\gamma r},
\]
we obtain that 
\begin{equation}\label{eq-r-rbar}
r(\gpi,\theta)\le \overline r(\gpi,\theta)\quad \text{for every $\theta\in \Theta$.}
\end{equation}
The $\gpi$-complexity of $\Theta$ at $\theta$ is therefore not larger than the $\gpi$-critical radius at $\theta$. 

\subsection{Parameter space with an entropy}
Let us now turn to the situation where for every $\epsilon>0$, $\Theta$ can be covered by a finite number of $\ell$-balls with radius $\epsilon$,
and $\ell$ is symmetric: $\ell(\theta, \theta') = \ell(\theta', \theta)$.
This implies that for every $\epsilon>0$ there exists a finite subset $\Theta[\epsilon]$ of $\Theta$ for which every $\ell$-ball centred at $\theta\in \Theta$ with radius $r\ge \epsilon$ contains at least one element of $\Theta[\epsilon]$. If $\gpi=\gpi_{\epsilon}$ is the uniform distribution on $\Theta[\epsilon]$, we derive that
\[
\gpi_{\epsilon}\pa{\sB(\theta,r)}\ge \frac{1}{\ab{\Theta[\epsilon]}}\quad \text{for every $\theta\in \Theta$ and $r\ge \epsilon$}.
\]
This implies that 
\begin{equation}\label{eq-rgamma}
r(\gpi_{\epsilon},\theta)\le \overline r(\gpi_{\epsilon},\theta)\le \max\ac{\epsilon,\frac{\log\ab{\Theta[\epsilon]}}{\gamma}}\quad \text{for every $\theta\in \Theta$.}
\end{equation}
Assuming, with no loss of generality, that $\epsilon\mapsto \ab{\Theta[\epsilon]}$ is non-increasing on $(0,+\infty)$, we may minimise the right-hand side of this inequality by choosing $\epsilon\et=\inf\{\epsilon>0,\; \log\ab{\Theta[\epsilon]}\le \gamma\epsilon\}$. In this case, $r(\gpi_{\epsilon\et},\theta)\le \epsilon\et$. 

\subsection{Hierarchical priors}
Let us now consider the situation where $\Theta=\bigcup_{m\in\cM}\Theta_{m}$ is an at most countable union of parameter spaces $\Theta_{m}$. This framework typically arises when we wish to select a suitable model for $\theta\et$ from a collection $\{\Theta_{m},\; m\in\cM\}$ of candidates. We therefore assume that each model $\Theta_{m}$ is equipped with its own prior distribution $\gpi_{m}$, and we additionally consider a prior on the index set $\cM$ that serves to favour certain models over others. Since $\cM$ is assumed to be at most countable, it suffices to assign a probability of the form $\exp(-L_{m})$ to each $m\in \cM$. Here, $L_{m}$ may be interpreted as a nonnegative weight associated with the model $\Theta_{m}$ with the additional constraint that
\[
\sum_{m\in\cM}e^{-L_{m}}=1.
\]
This results in a prior $\gpi$ on $\Theta$ defined by $\gpi=\sum_{m\in\cM}e^{-L_{m}}\gpi_{m}$ that is called {\em a hierarchical prior}. We note that for every $m\in\cM$ and $r>0$, 
\begin{align*}
\gpi\pa{\sB(\theta,r)}&=\sum_{m'\in\cM}e^{-L_{m'}}\gpi_{m'}\pa{\sB(\theta,r)}\ge e^{-L_{m}}\gpi_{m}\pa{\sB(\theta,r)}.
\end{align*}
In particular, for $r\ge \overline r(\gpi_{m},\theta)+L_{m}/\gamma\ge  \overline r(\gpi_{m},\theta)$ 
\begin{align*}
\gpi\pa{\sB(\theta,r)}\ge e^{-L_{m}}\gpi_{m}\pa{\sB(\theta,\overline r(\gpi_{m},\theta))}\ge \exp\cro{-\gamma\pa{\overline r(\gpi_{m},\theta)+\frac{L_{m}}{\gamma}}}\ge \exp\pa{-\gamma r}.
\end{align*}
Since the index $m$ can be chosen arbitrarily in $\cM$, we obtain that 
\begin{equation}\label{eq-r-Hier}
r(\gpi,\theta)\le \overline r(\gpi,\theta)\le \inf_{m\in\cM}\cro{\overline r(\gpi_{m},\theta)+\frac{L_{m}}{\gamma}}.
\end{equation}

\section{The posterior distribution and its main properties}\label{sect-post}
\subsection{The posterior distribution}
Our estimation strategy is based on the choice of a prior $\gpi$ on $\Theta$ and a suitable test statistic $\gT(\bsX,\theta,\theta')$ that allows one to compare two parameters $\theta\neq \theta'$ in $\Theta$. Given two numbers $\lambda>0$ and $\beta\in (0,1)$, we define  the density with respect to $\gpi$ of our posterior distribution $\gpi_{\bsX}$ on $\Theta$ by 
\begin{equation}\label{def-piX}
\frac{d\gpi_{\bsX}}{d\gpi}(\theta)=\frac{\exp\cro{-\lambda(1-\beta)\gT(\bsX,\theta)}}{\int_{\Theta}\exp\cro{-\lambda(1-\beta)\gT(\bsX,\theta)}d\gpi(\theta)}\quad \text{for $\theta\in \Theta$}
\end{equation}
where 
%
\begin{align}
\gT(\bsX,\theta)
&=\int_{\Theta}\gT(\bsX,\theta,\theta')\frac{\exp\cro{\lambda\gT(\bsX,\theta,\theta')}}{\int_{\Theta}\exp\cro{ \lambda \gT(\bsX,\theta,\theta')}d\gpi(\theta')}d\gpi(\theta').\label{def-TX}
\end{align}
Heuristically, this posterior distribution puts most of its mass around parameters $\theta$ which minimise over $\Theta$ the mapping $\theta\mapsto \sup_{\theta'\in \Theta}\gT(\bsX,\theta,\theta')$. By using Gibbs-like density measures, our randomised estimator mimics the estimator we could get in the frequentist paradigm by considering
\[
\argmin_{\theta\in \Theta}\sup_{\theta'\in \Theta}\gT(\bsX,\theta,\theta').
\]

The results established in this paper hold as soon as the quantities which appear in \eref{def-piX} and \eref{def-TX} are well-defined.  This means that the integrals involved in these definitions are finite and that $\gT(\bsX,\theta,\theta')$ is a measurable function of $\bsX,\theta$ and $\theta'$.  These conditions are satisfied under the assumption below, which is met in all the examples considered later.
\begin{ass}
\label{Ass-posterior-well-defined}
There exist two measurable functions
$\gT_0:\gE\times \Theta^2 \to \R$, $f:\gE\times \Theta \to [0,+\infty)$ such that for each $\gx\in \gE$:
\begin{enumerate}
  \item $\gT(\gx,\theta,\theta') = \gT_0(\gx,\theta,\theta') + f(\gx,\theta) - f(\gx,\theta')$ holds for every $(\theta,\theta')\in \Theta^2$,
  \item There exists $K_\gx\in [0,\infty)$ such that $|\gT_0(\gx,\theta,\theta')|\le K_\gx$ for every $(\theta,\theta')\in \Theta^2$.
\end{enumerate}
\end{ass}
Under this assumption, our posterior is well-defined, as shown in Lemma~\ref{lem-posterior-well-defined} of Section~\ref{sect-proof} 
and the mapping  $(\bsX,\theta)\mapsto d\gpi_{\bsX}(\theta)/d\gpi$ is a measurable function of $(\bsX,\theta)$.

\subsection{The main assumption}
For $\kappa>0$ and $\gtheta=(\theta_{1},\theta_{2},\theta_{3})$ in $\Theta^{3}$, we denote by $\Delta_{\kappa}(\bsX,\gtheta)$ the random variable 
\begin{align}
\Delta_{\kappa}(\bsX,\gtheta)=\kappa\gT(\bsX,\theta_{3},\theta_{2})-\gT(\bsX,\theta_{3},\theta_{1})\label{Delta-k}
\end{align}
and by $\sL_{\kappa}(\gtheta,\lambda|B)$ its Laplace transform at $\lambda>0$ conditionally on a set $B$ with positive probability. That is, 
\[
\sL_{\kappa}(\gtheta,\lambda|B)=\E_{B}\cro{\exp\pa{\lambda\Delta_{\kappa}(\bsX,\gtheta)}}=\frac{1}{\P(B)}\E\cro{\exp\pa{\lambda\Delta_{\kappa}(\bsX,\gtheta)}\1_{B}}.
\]
The following condition links the test statistic $\gT(\bsX,\cdot,\cdot)$ to the loss $\ell$. 
%
\begin{ass}\label{Ass-CasDet}
Let $\lambda>0$, $\beta\in (0,1)$, $\overline \beta=2-\beta\in (1,2)$ and 
$\frc_{1}(\lambda,\beta)$,
$\frc_{1}(\lambda,\overline \beta)$,
$\frc_{2}(\lambda,\beta)$,
$\frc_{2}(\lambda,\overline \beta)$,
$\frc_{3}(\lambda,\beta)$,
$\frc_{3}(\lambda,\overline \beta)$ be numbers depending on $\lambda$ and $\beta$ that satisfy the constraints 
\begin{equation}\label{cond-ei}
\frc_{1}(\lambda,\beta)\wedge \frc_{1}(\lambda,\overline \beta)\ge 0,\quad \frc_{2}(\lambda,\beta)\wedge \frc_{2}(\lambda,\overline \beta)>0,\quad \text{$\frc_{3}(\lambda,\overline \beta)\ge 0$ and $\frc_{3}(\lambda,\beta)<0$}.
\end{equation}
There exist $\theta\et\in \Theta$, an event $B$ with $\P(B)>0$ and a nonnegative number $A_{0}(\lambda,\kappa,\gP\et,B,\theta\et)$ such that for every $\gtheta=(\theta_{1},\theta_{2},\theta_{3})\in \Theta^{3}$ and $\kappa\in\{\beta,\overline \beta\}$, 
\begin{align}
\sL_{\kappa}(\gtheta,\lambda|B)\le  \exp\cro{A_{0}(\lambda,\kappa,\gP\et,B,\theta\et)+\sum_{i=1}^{3}(-1)^{i-1}\frc_{i}(\lambda,\kappa)\ell\pa{\theta\et,\theta_{i}}}.\label{BLaplace0}
\end{align}
\end{ass}


In our applications, $\lambda$ and $\beta$ are numerical constants that are tuned to satisfy the constraints given by \eref{cond-ei}. Given the values of $\lambda$ and $\beta$, the quantities $\frc_{1}(\lambda,\kappa),\frc_{2}(\lambda,\kappa)$ and $\frc_{3}(\lambda,\kappa)$ with $\kappa\in\{\beta,\overline \beta\}$ are therefore numerical constants as well. Even though we only require that \eref{BLaplace0} be satisfied for some parameter $\theta\et\in \Theta$, this inequality turns out to hold for every $\theta\et\in \Theta$ in our applications. The set $B$ may depend on $\theta\et$.

We provide a heuristic explanation below to offer a better insight into our Assumption~\ref{Ass-CasDet}. Let us first assume that $\gP\et=\gP_{\theta\et}$ belongs to $\sbM$ and $B=\Omega$ for the sake of simplicity. For small enough values of $\lambda$, the Laplace transform of $\Delta_{\kappa}(\bsX,\gtheta)$ at $\lambda$ is then of order 
\begin{align*}
\E\cro{1+\lambda \Delta_{\kappa}(\bsX,\gtheta)}&=1+\lambda\pa{\kappa\E\cro{\gT(\bsX,\theta_{3},\theta_{2})}-\E\cro{\gT(\bsX,\theta_{3},\theta_{1})}}.
\end{align*}
If $\gT(\bsX,\theta,\theta')$ were an unbiased estimator of $\ell(\theta\et,\theta)-\ell(\theta\et,\theta')$, we would then get 
\begin{align*}
\E\cro{1+\lambda \Delta_{\kappa}(\bsX,\gtheta)}&=1+\lambda\cro{\kappa\pa{\ell(\theta\et,\theta_{3})-\ell(\theta\et,\theta_{2})}-\pa{\ell(\theta\et,\theta_{3})-\ell(\theta\et,\theta_{1})}}\\
&=1+\lambda\ell\pa{\theta\et,\theta_{1}}-\lambda \kappa \ell\pa{\theta\et,\theta_{2}}+\lambda\pa{\kappa -1}\ell\pa{\theta\et,\theta_{3}}\\
&\le \exp\cro{\lambda\ell\pa{\theta\et,\theta_{1}}-\lambda \kappa \ell\pa{\theta\et,\theta_{2}}+\lambda\pa{\kappa -1}\ell\pa{\theta\et,\theta_{3}}}.
\end{align*}
As a consequence, at least for these small enough values of $\lambda$, our Assumption~\ref{Ass-CasDet} on the Laplace transform of $\Delta_{\kappa}(\bsX,\gtheta)$ is satisfied with $A_{0}(\lambda,\kappa,\gP_{\theta\et}, \Omega,\theta\et)=0$, $\frc_{1}(\lambda,\kappa)=\lambda$, $\frc_{2}(\lambda,\kappa)=\lambda \kappa$ and $\frc_{3}(\lambda,\kappa)=\lambda\pa{\kappa -1}$ for $\kappa\in\{\beta,\overline \beta\}$. Note that the constraints in \eref{cond-ei} are then automatically  satisfied since $\beta\in (0,1)$ and $\overline \beta=2-\beta\in (1,2) $. In fact, Assumption~\ref{Ass-CasDet} would also be met for $\lambda$ close enough to 0 even if $\gT(\bsX,\theta,\theta')$ were not an unbiased estimator of $\ell(\theta\et,\theta)-\ell(\theta\et,\theta')$ provided that it satisfied for every $\theta,\theta'\in \Theta$
\[
c\ell(\theta\et,\theta)-C\ell(\theta\et,\theta')\le \E\cro{\gT(\bsX,\theta,\theta')}\le C\ell(\theta\et,\theta)-c\ell(\theta\et,\theta')
\]
for constants $0<c\le C$. 

When $\gP\et$ does not belong to $\sbM$ and $B\ne \Omega$, we shall see in our examples that the logarithm of $\sL_{\kappa}(\gtheta,\lambda|B)$ inflates by some quantity $A_{0}(\lambda,\kappa,\gP\et,B,\theta\et)>0$. It measures  the impact of a possible misspecification of the model and the fact that our observation $\bsX$ may take unfavourable configurations.   

In view of providing non-asymptotic results on the performance of our randomised estimator, we wish to make the numerical constants which are involved in our main results as explicit as possible. Hereafter, we provide a list of two constants which can be calculated from the values of $\lambda$ and $\beta$:
\begin{align}
\gamma=\gamma(\lambda,\beta)&=\frac{1}{3}\pa{\frc_{2}(\lambda,\beta)\wedge \frc_{2}(\lambda,\overline \beta)\wedge |\frc_{3}(\lambda,\beta)|}>0\label{def-gamma};\\
J=J(\lambda,\beta)&=2+\PES{\log_{2}\pa{1+\frac{\frc_{1}(\lambda,\overline \beta)+\frc_{1}(\lambda,\beta)+\frc_{3}(\lambda,\overline \beta)}{2\gamma}}}.\label{def-J}
\end{align}
These constants are probably not sharp and only aim at making our statement more precise. The value of $\gamma$ that results from these calculations is that which is used throughout this paper in the definitions of the quantities $r(\gpi,\theta)$ and $\overline r(\gpi,\theta)$ that are given in Section~\ref{sect-def-r}.

We also use the notation
\begin{equation}\label{def-e-r}
A_{0}(\gP\et,B,\theta\et)=\frac{A_{0}(\lambda,\beta,\gP\et,B,\theta\et)+A_{0}(\lambda,\overline \beta, \gP\et,B,\theta\et)}{2},
\end{equation}
hence dropping the dependency with respect to the numerical constants $\lambda$ and $\beta$.

We illustrate our assumption in the density framework.
\begin{exa}[The density framework]
Let $\bsX = (X_{1}, \ldots, X_{n})$ be an $n$-tuple of random variables taking values in a measurable space $(\cX, \cA)$. 
One may naturally model these variables as i.i.d.\ with a distribution belonging to a parametrised model $\{P_{\theta} = p_{\theta} \cdot \mu, \;\theta \in \Theta\}$ dominated by a measure $\mu$, 
although this model may be misspecified.
We wish to analyse the performance of our random estimator in a scenario where reality deviates from our model. The observed data $\{X_{1}, \ldots, X_{n}\}$ may be a corrupted version of an ideal dataset $\{X_{1}\et, \ldots, X_{n}\et\}$ consisting of genuine independent random variables on $(\Omega,\cC)$. In particular, $\{X_{1}, \ldots, X_{n}\}$ may contain a small proportion of dependent data, some of which correspond to repetitions of $X_{i}\et$ or a function thereof.

We denote by $I$ the set of indices for which $X_{i}$ and $X_{i}\et$ do not coincide, that is
\begin{equation}\label{def-I}
I = \left\{i \in \{1, \ldots, n\} \colon X_{i} \neq X_{i}\et\right\}.
\end{equation}
This set $I$ may be random, and we denote by $t_{\xi}$ the $(1 - e^{-\xi})$-quantile of $|I|$. The marginal distribution of $X_{i}\et$ for $i \in \{1, \ldots, n\}$ is denoted by $P_{i}\et$, and we denote their average by $P\et = n^{-1} \sum_{i=1}^{n} P_{i}\et$. We expect that our statistical model provides a reasonable approximation to most of the marginals $P_{1}\et, \ldots, P_{n}\et$, and more specifically to their average $P\et$, with respect to the Hellinger distance.

We recall that the Hellinger distance $h$ between two probability measures $P = p \cdot \nu$ and $Q = q \cdot \nu$ on $(\cX, \cA)$, both dominated by $\nu$, is
\[
h(P, Q) = \sqrt{\frac{1}{2} \int_{\cX} \left(\sqrt{p} - \sqrt{q}\right)^{2} \, d\nu}
\]
the result being independent of the choice of $\nu$. For this problem, the test statistic is
\begin{equation}\label{eq-T1}
\gT_{1}(\bsX,\theta,\theta')=\sum_{i=1}^{n}\psi\pa{\sqrt{\frac{p_{\theta'}(X_{i})}{p_{\theta}(X_{i})}}}\quad \text{for $\theta,\theta'\in \Theta$},
\end{equation}
where $\psi$ was defined in \eref{def-psi}, and we recall that $\psi$ takes values in $[-1,1]$.
We assume additionally that the mapping $(x,\theta)\mapsto p_\theta(x)$ is measurable, so that $\gT_{1}$ satisfies Assumption~\ref{Ass-posterior-well-defined}. 
Next, we show that Assumption~\ref{Ass-CasDet} holds.

\begin{prop}\label{prop-densite}
Let $\xi>0$ and $\theta\et$ be an arbitrary point in $\Theta$. Assumption~\ref{Ass-CasDet} is satisfied at $\theta\et$ with $\gT=\gT_{1}$, $B= \{|I|\le t_{\xi}\}$, 
$\ell(\theta,\theta')=nh^{2}(P_{\theta},P_{\theta'})$ for $\theta,\theta'\in \Theta$ provided that  $(\lambda,\beta)\in (0,+\infty)\times (0,1)$ satisfies the constraints  
\begin{equation}\label{eq-constraints1}
\lambda\overline \beta \phi(\lambda (1+\overline \beta))<\frac{1}{4}\quad \text{and}\quad  
\frac{5\beta}{4}+\lambda(\beta^{2}+1)\phi( \lambda (1+\beta))<\frac{1}{4},
\end{equation}
with $\phi$ defined in \eref{def-psi}. The values $\lambda=0.11$ and $\beta=0.09$ suit, in which case one may take $\gamma=7.9\times 10^{-4}$ and $J=13$. The expressions of the numerical constants $\frc_{i}(\lambda,\kappa)$ for $i\in\{0,1,2,3\}$ and $\kappa\in\{\beta,\overline \beta\}$ are provided in the proof of this proposition in Section~\ref{prf-prop-densite} and $A_{0}(\lambda,\kappa,\gP\et,B,\theta\et)=2\lambda (1+\kappa)t_{\xi}-\log(1-e^{-\xi})+\frc_{0}(\lambda,\kappa)nh^{2}(P\et,P_{\theta\et})$. 
\end{prop}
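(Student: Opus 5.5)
Write $\Delta_\kappa(\bsX,\gtheta)=\sum_{i=1}^n Z_i$ with $Z_i=\kappa\psi\bigl(\sqrt{p_{\theta_2}/p_{\theta_3}}(X_i)\bigr)-\psi\bigl(\sqrt{p_{\theta_1}/p_{\theta_3}}(X_i)\bigr)$. Each $Z_i$ satisfies $|Z_i|\le 1+\kappa$. The plan has three steps: remove the corrupted indices, bound the Laplace transform of the clean sum by independence and \eref{eq-debase}, and compute first and second moments through the known properties of $\psi$ relative to the Hellinger distance (as in the $\rho$-estimation papers of Baraud and Birg\'e).

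\emph{Step 1 (reduction to the clean sample).} Let $Z_i\et$ be the analogue of $Z_i$ with $X_i$ replaced by $X_i\et$. Then $Z_i-Z_i\et$ vanishes for $i\notin I$ and is bounded by $2(1+\kappa)$ otherwise. On $B=\{|I|\le t_\xi\}$ this gives $\lambda\Delta_\kappa\le \lambda\sum_i Z_i\et+2\lambda(1+\kappa)t_\xi$. Since $\P(B)\ge 1-e^{-\xi}$, we get
\[
\sL_\kappa(\gtheta,\lambda|B)\le \frac{e^{2\lambda(1+\kappa)t_\xi}}{1-e^{-\xi}}\,\E\Bigl[\exp\Bigl(\lambda\sum_{i=1}^n Z_i\et\Bigr)\Bigr].
\]
These two factors produce the first two terms of $A_0$.

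\emph{Step 2 (independence and \eref{eq-debase}).} By independence the expectation factorises. Apply \eref{eq-debase} to $\lambda Z_i\et\le \lambda(1+\kappa)$ for each $i$, and use that $\phi$ is increasing. This bounds the log of the expectation by
\[
\lambda\sum_i\E[Z_i\et]+\frac{\lambda^2\phi(\lambda(1+\kappa))}{2}\sum_i\E[(Z_i\et)^2].
\]

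\emph{Step 3 (moment bounds).} We need the bounds for $\psi$ for arbitrary $P\et$:
\[
\E_{P}\Bigl[\psi\Bigl(\sqrt{\tfrac{q'}{q}}\Bigr)\Bigr]\le a_1h^2(P,Q)-a_2h^2(P,Q'),\qquad
\E_P\Bigl[\psi^2\Bigl(\sqrt{\tfrac{q'}{q}}\Bigr)\Bigr]\le a_3\bigl(h^2(P,Q)+h^2(P,Q')\bigr),
\]
with, for instance, $a_1=4$, $a_2=3/8$ and $a_3=3\sqrt2$-type constants (Baraud--Birg\'e). I would reprove them directly: the mean bound by writing $\psi(\sqrt{q'/q})=(\sqrt{q'}-\sqrt q)/(\sqrt{q'}+\sqrt q)$ and comparing with $\int\sqrt p(\sqrt{q'}-\sqrt q)$, and the second-moment bound from $\psi^2\le(\sqrt{q'}-\sqrt q)^2/(\sqrt{q'}+\sqrt q)^2$ together with the triangle inequality for $h$.

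The bounds are linear in $P$, so summing over $i$ yields the same bounds with $P\et$ in place of each $P_i\et$, multiplied by $n$. Apply them with $P=P\et$, $Q=P_{\theta_3}$ and $Q'=P_{\theta_2}$ (coefficient $+\kappa$), and with $Q'=P_{\theta_1}$ (coefficient $-1$, using antisymmetry $\psi(1/u)=-\psi(u)$ to get the reverse inequality). For the square use $(a-b)^2\le(1+\kappa)(a^2/\kappa\cdot\kappa^2+\dots)$, or simply $(\kappa a-b)^2\le 2(\kappa^2a^2+b^2)$. Finally replace every $h^2(P\et,P_{\theta_j})$ by $h^2(P_{\theta\et},P_{\theta_j})$, using $h^2(P\et,Q)\le(1+\epsilon)h^2(P_{\theta\et},Q)+(1+1/\epsilon)h^2(P\et,P_{\theta\et})$ in the direction needed and the reverse inequality with a minus sign. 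This produces $\frc_0\,nh^2(P\et,P_{\theta\et})$ and coefficients $\frc_1,\frc_2,\frc_3$ for $\ell(\theta\et,\theta_j)$.

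The main obstacle is the sign bookkeeping for $\frc_2>0$ and $\frc_3(\lambda,\beta)<0$. The $\theta_3$ term has mean coefficient of order $\lambda(\kappa-1)$, which is negative for $\kappa=\beta$. This must beat the variance contribution $\lambda^2\phi(\cdot)(\kappa^2+1)$ and the loss from $\epsilon$, which is exactly what the second constraint in \eref{eq-constraints1} encodes; the first constraint plays the same role for $\frc_2(\lambda,\overline\beta)$. I would choose the precise constants in Step 3 (for example $1/4$-type coefficients) so that these constraints appear verbatim. Finally, a numerical check at $\lambda=0.11$ and $\beta=0.09$ gives $\gamma$ and $J$ via \eref{def-gamma} and \eref{def-J}.
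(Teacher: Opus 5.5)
Your outline follows the paper's proof step for step: the bound $\Delta_\kappa(\bsX,\gtheta)\le 2(1+\kappa)|I|+\Delta_\kappa(\bsX\et,\gtheta)$ on $B$ with $\P(B)\ge 1-e^{-\xi}$, independence plus \eref{eq-debase} with $b=1+\kappa$, $\psi$-moment bounds averaged into $P\et$, antisymmetry of $\psi$ for the $\theta_1$ term, and a transfer from $P\et$ to $P_{\theta\et}$. What is missing is the quantitative core, and that core is precisely what the proposition asserts.

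Suppose $\E_R[\psi(\sqrt{q'/q})]\le a_0h^2(R,Q)-a_1h^2(R,Q')$ and $\E_R[\psi^2(\sqrt{q'/q})]\le a_2^2\,(h^2(R,Q)+h^2(R,Q'))$. Your Steps~2--3 then produce the coefficient $c_2=\lambda\kappa\,(a_1-\lambda\kappa a_2^2\phi(\lambda(1+\kappa)))$ for $\theta_2$ and $c_3=\lambda\,(\kappa a_0-a_1+\lambda a_2^2(\kappa^2+1)\phi(\lambda(1+\kappa)))$ for $\theta_3$. The constraints \eref{eq-constraints1} are literally $c_2(\lambda,\overline\beta)>0$ and $c_3(\lambda,\beta)<0$ with $a_0=5/2$, $a_1=1/2$, $a_2^2=2$. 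The first gives $c_2(\lambda,\beta)>0$ by monotonicity, and $c_1>0$, $c_3(\lambda,\overline\beta)>0$ are automatic.

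With the constants you quote ($4$, $3/8$, $3\sqrt2$), the second condition becomes $4\beta+3\sqrt2\,\lambda(\beta^2+1)\phi(\lambda(1+\beta))<3/8$. Its left-hand side is about $0.85$ at $(\lambda,\beta)=(0.11,0.09)$, and the first condition fails there as well. You would therefore establish Assumption~\ref{Ass-CasDet} only under other, far more restrictive constraints, and lose the claims about $(0.11,0.09)$, $\gamma$ and $J$. ``Choosing the constants so that the constraints appear verbatim'' is not an available step: the constants are dictated by what you can prove.

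Two side remarks. First, the $\theta_3$ mean coefficient is $\lambda(5\kappa-1)/2$, not of order $\lambda(\kappa-1)$; this is why $\beta<1/5$ is forced. Second, the transfer step plays no role in the constraints. With $\epsilon=1$ it sends $c\mapsto 2c$ for $c\ge0$ and $c\mapsto c/2$ for $c<0$, so signs are preserved, and these particular factors are what yield $\gamma=7.9\times10^{-4}$ and $J=13$.

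The missing ingredient is a sharpened $\psi$--Hellinger inequality. It follows from two pointwise bounds, valid for $u,v,w\ge0$ with $u+v>0$:
\[
\frac{v-u}{v+u}\,w^2\le \frac54(w-u)^2-\frac14(w-v)^2+\frac14(v^2-u^2),\qquad \Bigl(\frac{v-u}{v+u}\Bigr)^2w^2\le (w-u)^2+(w-v)^2 .
\]
For the first, the difference is the quadratic $\frac{2u}{u+v}w^2+\frac{v-5u}{2}w+u^2$ in $w$:
\begin{itemize}
\item when $u=0$ it is linear with nonnegative slope;
\item when $v\ge 5u>0$ its minimiser is $\le 0$, so on $[0,\infty)$ it is at least $u^2$;
\item otherwise its discriminant $(v-u)^2(v-7u)/(4(u+v))$ is $\le 0$.
\end{itemize}
The second bound is Cauchy--Schwarz applied to $\frac{(v-u)w}{v+u}=\frac{v}{u+v}(w-u)-\frac{u}{u+v}(w-v)$.

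Take $u=\sqrt q$, $v=\sqrt{q'}$, $w=\sqrt r$ with respect to a common dominating measure and integrate; this gives $a_0=5/2$, $a_1=1/2$, $a_2^2=2$. On $\{q=q'=0\}$ the right-hand sides equal $w^2$ and $2w^2$, and $|\psi|\le 1$ suffices there.

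Your proposed second-moment argument via the triangle inequality for $h$ also does not close as stated. After bounding $(\sqrt{q'}-\sqrt q)^2$, you are left with the weight $r/(\sqrt q+\sqrt{q'})^2$, which is unbounded; the decomposition above is what removes it.
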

\end{exa}
The density framework was studied in Baraud~\citeyearpar{baraud2021robust} (with $\psi/2$ in place of $\psi$) under the assumption that $X_{i}=X_{i}\et$ for all $i\in\{1,\ldots,n\}$, hence, under the assumption that the $X_{i}$ are truly independent.

\subsection{The main result}
The aim of this section is to provide a bound on the loss $\ell(\theta\et,\widehat \theta)$ when $\widehat \theta$ has distribution $\gpi_{\bsX}$. Interestingly, this bound can directly be inferred from the control of the Laplace transform of the random variable $\Delta_{\kappa}(\bsX,\gtheta)$ given by \eref{Delta-k}. 
\begin{thm}\label{thmCasDet}
Let $\theta\et\in \Theta$ and $\xi>0$. Assume that $\lambda>0$, $\beta\in (0,1)$ and the event $B$ satisfy Assumption~\ref{Ass-CasDet} at $\theta\et$. Then our posterior $\gpi_{\bsX}$ defined by~\eref{def-piX} satisfies, 
\begin{equation}\label{eq-thmCasDet}
\E_{B}\cro{\gpi_{\bsX}\pa{\co{\sB}\pa{\theta\et,2^{J}r}}}\le 2\exp(-\xi)\quad \text{with}\quad r=\ray(\gpi,\theta\et)+\frac{A_{0}(\gP\et,B,\theta\et)+1+\xi}{\gamma}
\end{equation}
where $\gamma$ and $J$ are the constants given~\eref{def-gamma} and \eref{def-J} respectively while $A_{0}(\gP\et,B,\theta\et)$ is given by \eref{def-e-r}. This means that a randomised  estimator $\widehat \theta$ with distribution $\gpi_{\bsX}$ satisfies 
\begin{equation}\label{eq-thmCasDet01}
\P_{B}\cro{2^{-J}\ell\pa{\theta\et,\widehat \theta}>\ray(\gpi,\theta\et)+\gamma^{-1}\pa{A_{0}(\gP\et,B,\theta\et)+1+\xi}}\le 2\exp(-\xi).
\end{equation}
In particular, if $\P(B)\ge 1-e^{-\xi}$ then $\widehat \theta$ satisfies
\begin{equation}\label{eq-thmCasDet02}
\P\cro{2^{-J}\ell\pa{\theta\et,\widehat \theta}>\ray(\gpi,\theta\et)+\gamma^{-1}\pa{A_{0}(\gP\et,B,\theta\et)+1+\xi}}\le 3\exp(-\xi).
\end{equation}
\end{thm}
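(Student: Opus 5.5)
The plan is to bound $\gpi_{\bsX}$ of annuli around $\theta\et$ with a change-of-measure (PAC-Bayes-type) argument. The argument applies Jensen's inequality to the Gibbs weights in \eref{def-TX}. It then uses Fubini over products of the prior, so that Assumption~\ref{Ass-CasDet} is only ever needed at fixed triples $\gtheta$. Write $Z_\theta=\int_\Theta e^{\lambda\gT(\bsX,\theta,\theta')}d\gpi(\theta')$ and $\widetilde\gpi_{\bsX,\theta}$ for the corresponding Gibbs measure. Two elementary facts are used throughout. First, $\lambda\gT(\bsX,\theta)=\lambda\int \gT\,d\widetilde\gpi_{\bsX,\theta}$ equals $\log Z_\theta+\mathrm{KL}(\widetilde\gpi_{\bsX,\theta}\|\gpi)\ge \log Z_\theta$. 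Second, by the Donsker--Varadhan variational formula, $\lambda\kappa\gT(\bsX,\theta)\le \log\int e^{\lambda\kappa \gT(\bsX,\theta,\theta')}d\gpi(\theta')+(1-\kappa)\,\mathrm{KL}$-type corrections. The exact algebra of these corrections is what produces the pair $\beta,\overline\beta=2-\beta$ symmetric around~1, with the average $A_0(\gP\et,B,\theta\et)$ of \eref{def-e-r}.

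\textbf{Step 1: upper bound on the numerator.} For a measurable set $\sA\subset\Theta$,
\[
\gpi_{\bsX}(\sA)=\frac{\int_\sA e^{-\lambda(1-\beta)\gT(\bsX,\theta)}d\gpi(\theta)}{\int_\Theta e^{-\lambda(1-\beta)\gT(\bsX,\theta)}d\gpi(\theta)}.
\]
Take $\sA=\sB(\theta\et,2^{j+1}r)\setminus\sB(\theta\et,2^{j}r)$ for $j\ge J$. In the numerator, $-\gT(\bsX,\theta)=\int \gT(\bsX,\theta',\theta)d\widetilde\gpi_{\bsX,\theta}(\theta')$ by antisymmetry. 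Applying Jensen and Hölder to $\widetilde\gpi_{\bsX,\theta}$, I expect a bound by a ratio of integrals of $\exp[\lambda\kappa\gT(\bsX,\theta_3,\theta_2)-\lambda\gT(\bsX,\theta_3,\theta_1)]$ over prior mass, which is exactly $e^{\lambda\Delta_\kappa(\bsX,\gtheta)}$. The idea is to combine the numerator and denominator so that, after multiplying through, the random part becomes
\[
\int\!\!\int\!\!\int e^{\lambda\Delta_\kappa(\bsX,\gtheta)}\,d\gpi(\theta_1)\,d\gpi(\theta_2)\,d\gpi(\theta_3),
\]
with $\theta_1$ restricted to a small ball $\sB(\theta\et,r)$, with $\theta_3\in\sA$, and with $\theta_2$ unrestricted. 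Each such integral is then divided by $\gpi(\sB(\theta\et,r))$.

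\textbf{Step 2: taking $\E_B$ and applying Assumption~\ref{Ass-CasDet}.} Taking $\E_B$, Fubini and \eref{BLaplace0} give
\[
\exp\Big[A_0+\frc_1 r-\frc_2\ell(\theta\et,\theta_2)+\frc_3\ell(\theta\et,\theta_3)\Big].
\]
For $\kappa=\beta$ we have $\frc_3<0$, so this factor decays like $e^{-3\gamma 2^{j}r}$ on $\sA$. The $\theta_2$-integral is handled by splitting $\Theta$ into dyadic shells and using \eref{prop-epsn} with $\frc_2\ge 3\gamma$. The doubling condition \eref{prop-epsn}, valid for $r\ge \ray(\gpi,\theta\et)$, controls $\gpi(\sB(\theta\et,2^{k}r))/\gpi(\sB(\theta\et,r))\le \exp(\gamma(2^{k}-1)r)$. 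This is what makes all the ratios of prior masses summable. The $\overline\beta$ inequality is used to lower bound the denominator, i.e.\ to show that $\gpi_{\bsX}$ does not lose mass near $\theta\et$; there $\frc_3(\lambda,\overline\beta)\ge 0$ only costs a factor absorbed in $J$. Averaging the two versions (geometric mean via Cauchy--Schwarz) is why $A_0(\gP\et,B,\theta\et)$ appears.

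\textbf{Step 3: summation and conclusion.} Summing over $j\ge J$, the exponent for shell $j$ is of order
\[
A_0+(\frc_1(\beta)+\frc_1(\overline\beta)+\frc_3(\overline\beta))r-2\gamma 2^{j}r .
\]
The choice \eref{def-J} ensures $2^{J-2}\cdot 2\gamma$ dominates $\frc_1+\frc_3$ terms plus $\gamma$, so the sum is at most $\sum_j e^{A_0-\gamma 2^{j-J+1} r}\le 2e^{A_0-\gamma r+1}\cdot e^{-1}$. With $\gamma r\ge \gamma\ray+A_0+1+\xi$ this gives $2e^{-\xi}$. Then \eref{eq-thmCasDet01} follows from \eref{eq-cond}, and \eref{eq-thmCasDet02} follows from $\P(A)\le \P_B(A)+\P(\co B)$.

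The main obstacle is Step 1. The ratio defining $\gpi_{\bsX}$ involves $\gT(\bsX,\theta)$, which is a nonlinear functional (a Gibbs average) rather than a quantity of the form $\gT(\bsX,\theta,\theta')$. One must find the right sequence of Jensen/variational inequalities, in both directions and with exponents $1-\beta$ and $1$, so that only $e^{\lambda\Delta_\kappa}$ with $\kappa\in\{\beta,\overline\beta\}$ survives, without stray partition functions $Z_\theta$ left outside the expectation. I would first try bounding $e^{-\lambda(1-\beta)\gT(\bsX,\theta)}$ above by $Z_\theta^{-(1-\beta)}$ and below via the variational formula. I would then apply Hölder to $Z_\theta^{\beta}$ to generate the $\kappa=\beta$ term, and treat the denominator symmetrically with $2-\beta$.
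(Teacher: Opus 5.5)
Your Steps~2--3 follow the paper's endgame: Assumption~\ref{Ass-CasDet}, the dyadic-shell bound of Lemma~\ref{lem-som} with $\frc_2(\lambda,\kappa),|\frc_3(\lambda,\beta)|\ge 3\gamma$, and the choice of $J$ and $r$. The gap is Step~1, which you flag as the main obstacle and leave as an expectation. It is the core of the proof, and two ingredients are missing.

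First, the ratio $\gpi_{\bsX}(\sA)=N/D$ must be decoupled. The paper splits on $\{D>z\}$, applies Markov's inequality to $D^{-1}$ on the complement, and optimises over $z$. This gives $\E_B[\gpi_{\bsX}(\sA)]\le 2\sqrt{\cE_1\cE_2(\sA)}$ with $\cE_1=\E_B[D^{-1}]$ and $\cE_2(\sA)=\E_B[N]$. Your Cauchy--Schwarz idea also works here, via $\gpi_{\bsX}(\sA)\le\sqrt{N/D}$.

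Second, and crucially, the Gibbs average must be turned into fixed-triple quantities. This is Jensen for $\exp$ under $\widetilde\gpi_{\bsX,\theta}$. Equivalently, it is your Donsker--Varadhan bound combined with your \emph{exact} identity $\mathrm{KL}(\widetilde\gpi_{\bsX,\theta}\|\gpi)=\lambda\gT(\bsX,\theta)-\log Z_\theta$; no ``corrections'' are left over:
\[
e^{\lambda s\gT(\bsX,\theta)}\le\frac{1}{Z_\theta}\int_\Theta e^{\lambda(1+s)\gT(\bsX,\theta,\theta_2)}\,d\gpi(\theta_2)=\int_\Theta\pa{\int_\Theta e^{-\lambda\Delta_{1+s}(\bsX,(\theta_1,\theta_2,\theta))}\,d\gpi(\theta_1)}^{-1}d\gpi(\theta_2).
\]
The partition function is absorbed into an inverse integral over $\theta_1$. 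Taking $s=-(1-\beta)$ (numerator) gives $\kappa=\beta$, and $s=1-\beta$ (denominator) gives $\kappa=\overline\beta=2-\beta$. You then still need a way to take $\E_B$ of inverse integrals. The paper uses the Audibert--Catoni inequality (Lemma~\ref{lem-Catoni}), $\E_B[(\int e^{-\lambda\Delta_\kappa}d\gpi(\theta_1))^{-1}]\le(\int d\gpi(\theta_1)/\sL_\kappa(\gtheta,\lambda|B))^{-1}$ with $\gtheta=(\theta_1,\theta_2,\theta)$. It applies it once more to the outer $\theta$-integral in $\cE_1$. Alternatively, restrict $\theta_1$ (and, in $\cE_1$, also $\theta$) to $\sB(\theta\et,r)$ and use convexity of $x\mapsto 1/x$. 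That yields exactly the triple integrals of $e^{\lambda\Delta_\kappa}$ you anticipate, normalised by $\gpi(\sB(\theta\et,r))^{2}$.

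The fallback you sketch would not work. After $e^{-\lambda(1-\beta)\gT(\bsX,\theta)}\le Z_\theta^{-(1-\beta)}=Z_\theta^{-1}Z_\theta^{\beta}$, Hölder gives $\int e^{\lambda\beta\gT(\bsX,\theta,\theta')}d\gpi(\theta')\le Z_\theta^{\beta}$. That is the wrong direction for producing a $\kappa=\beta$ term.

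More importantly, dropping the $\theta_2$-integral destroys the cancellation the theorem relies on. In the paper, $\cE_2(\sA)\le e^{A_0(\beta)}I_{1,\beta}^{-1}(\Theta)I_{2,\beta}(\Theta)I_{3,\beta}(\sA)$. There $I_{2,\beta}(\Theta)\lesssim\gpi(\sB(\theta\et,r))$ cancels the factor $\gpi(\sB(\theta\et,r))^{-1}$ coming from $I_{1,\beta}^{-1}(\Theta)$, and likewise in $\cE_1$. Separately, $I_{3,\beta}(\sA)\lesssim\gpi(\sB(\theta\et,r))$ cancels $I_{3,\overline\beta}^{-1}(\Theta)$ from $\cE_1$. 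Without $I_{2,\beta}$, a negative power of $\gpi(\sB(\theta\et,r))$ survives. You would then need a lower bound on $\gpi(\sB(\theta\et,r))$ itself, i.e.\ the critical radius $\overline r(\gpi,\theta\et)$, rather than the doubling-type complexity $\ray(\gpi,\theta\et)$ appearing in the statement.

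A minor point: your Step~3 drops the constants $e^{\Xi(\eta)}$ from Lemma~\ref{lem-som}. With $\eta=1$ they total $\tfrac32\Xi(1)<1$, and together with the requirement $r\ge\gamma^{-1}$ this is what the ``$+1$'' in $r$ pays for.
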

The proof of Theorem~\ref{thmCasDet} is postponed to Section~\ref{sect-PThm1}.

\begin{exa}[The density framework (continued)]
We have seen in our Proposition~\ref{prop-densite} that the test statistic $\gT_{1}$ given by \eref{eq-T1} satisfies our Assumption~\ref{Ass-CasDet} for the choice $(\lambda,\beta)=(0.11, 0.09)$. We may therefore infer from Theorem~\ref{thmCasDet} the following result. For every  $\xi>\log 3$ and $\theta\et\in \Theta$, a randomised estimator $\widehat \theta$ drawn with the distribution $\gpi_{\bsX}$ defined by \eref{def-piX} with $\gT=\gT_{1}$ possesses the property that with  a probability at least $1-3e^{-\xi}$
\begin{equation}\label{thmDPP-00}
Ch^{2}\pa{P_{\theta\et},P_{\widehat{\theta}}}\le h^{2}(P\et,P_{\theta\et})+\frac{r(\gpi,\theta\et)+t_{\xi}+\xi}{n}
\end{equation}
where $C$ is a positive numerical constant. In particular, it follows from the triangle inequality that with a probability at least $1-3e^{-\xi}$
\begin{equation}\label{thmDPP-01}
Ch^{2}\pa{P\et,P_{\widehat \theta}}\le \inf_{\theta\et\in \Theta}\cro{h^{2}(P\et,P_{\theta\et})+\frac{r(\gpi,\theta\et)}{n}}+\frac{t_{\xi}+\xi}{n}.
\end{equation}
%

Some comments are in order: 

It follows from~\eref{thmDPP-00} that $r(\gpi,\theta\et)/n$ is the bound we would get if the data were truly i.i.d.\ with distribution $P_{\theta\et}\in\sM$.

When the data are independent but not i.i.d., hence $X_{i}=X_{i}\et\sim P_{i}\et$ for all $i\in\{1,\ldots,n\}$, we may take $t_{\xi}=0$ and this risk bound inflates by the additional term $h^{2}(P\et,P_{\theta\et})$ where we recall that $P\et=n^{-1}\sum_{i=1}^{n}P_{i}\et$. In particular, $h^{2}(P\et,P_{\theta\et})$ may be 0 even when none of the marginal distributions $P_{i}\et$ belong to $\sM$ provided that their average does. 

When $\etc{X}$ are independent, an integration of \eref{thmDPP-01} with respect to $\xi>0$ yields the risk bound 
\[
C\E\cro{h^{2}\pa{P\et,P_{\widehat \theta}}}\le \inf_{\theta\et\in \Theta}\cro{h^{2}(P\et,P_{\theta\et})+\frac{1+r(\gpi,\theta\et)}{n}}.
\]
The risk of $\widehat \theta$ therefore achieves the best trade-off between the approximation of $P\et$ by an element of the form $P_{\theta\et}\in\sM$ and the $\gpi$-complexity of the model $\Theta$ at $\theta\et$. 

When the dataset $\{X_{1}\et, \ldots, X_{n}\et\}$ is corrupted in the sense that some of the $X_{i}\et$ are replaced by arbitrary (possibly dependent) data, $t_{\xi}$ may be positive. Inequality \eref{thmDPP-01} then shows that the accuracy of our estimator remains of the same order as in the previous situation, provided that the number of such corrupted data points remains sufficiently small compared to $r(\gpi, \theta\et)$, at least with a probability close to 1.
\end{exa}

\section{Estimating the distribution of a Poisson process with covariates}\label{sect-Poisson}
Throughout this section, 
$\cX$ denotes a Borel subset of a complete separable metric space that we equip with the inherited metric and the corresponding Borel $\sigma$-algebra $\cA$.
We shall view a finite point process $X$ on $(\cX,\cA)$ both as a finite random subset of $\cX$ and a finite discrete measure on $(\cX,\cA)$ putting a mass one at each element of $X$. For example, for a measurable subset $A$ of $\cX$, we shall use the notation $X(A)$ when $X$ is viewed as a measure and the notation $|X\cap A|$ when it is viewed as a subset. In particular, $X(\cX)=|X\cap \cX|<+\infty$ a.s.\ since $X$ is finite. Here, we do not exclude the situation where two (or more) points of $X$ coincide, the elements of $X$ should therefore be counted with their multiplicities.

Given a finite measure $\nu$  on $(\cX,\cA)$, we recall that $X$ is a Poisson process with finite intensity measure $\nu$
if for every bounded measurable function $f$ on $(\cX,\cA)$,
\begin{equation}\label{eq-caraPoisson}
\E\cro{\exp\pa{\int_{\cX}fdX}}=\exp\cro{\int_{\cX}\pa{e^{f}-1}d\nu}.
\end{equation}
In particular, by applying this equality to $sf$, with $s\in\R$, and performing a series expansion around 0, we obtain the equality
\[
\E\cro{\int_{\cX}fdX}=\int_{\cX}fd\nu.
\]
A more classical way of defining a Poisson process is by means of this important property. For every disjoint measurable subsets $A_1,\ldots,A_m$ of $\cX$, $X(A_{1}),\ldots,X(A_{m})$ are independent Poisson random variables with means $\nu(A_{1}),\ldots,\nu(A_{m})$ respectively. By convention, a Poisson distribution with mean 0 is the Dirac mass at 0.

\subsection{The statistical framework}
We observe $n\ge 1$ pairs $(w_{1},X_{1}),\ldots,(w_{n},X_{n})$ of random variables where the $w_{i}$ are deterministic covariates with values in $\sW$ and $\etc{X}$ are (almost surely) finite point processes on the measure space $(\cX,\cA,\mu)$. 

In order to model our data, we assume that there exist $n$ independent Poisson point processes $\etc{X\et}$ defined on $(\Omega,\cC)$ with finite intensity measures $\etc{\nu\et}$ respectively, such that the cardinality of the set $X_{i}\ominus X_{i}\et=(X_{i}\setminus X_{i}\et)\cup (X_{i}\et\setminus X_{i})$ is small enough, at least for most of these indices $i\in \{1,\ldots,n\}$. This means that each $X_{i}$ is regarded as a corrupted version of a genuine Poisson point process $X_{i}\et$ from which some points have possibly been removed or added. The misspecification of our model will be partly measured by the random variable
\[
N = \sum_{i=1}^{n} N_{i} \quad \text{with} \quad N_{i} = \ab{X_{i} \ominus X_{i}\et} \quad \text{for } i \in \{1, \ldots, n\},
\]
which counts how many points lie in the symmetric difference between the subsets $X_{i}\et$ and $X_{i}$. The random variable $N$ can also account for the fact that a subset of our dataset may consist of dependent random variables that could be a source of contamination. We denote by $t_{\xi}$ the  $(1-e^{-\xi})$-quantile of $N$ for $\xi>0$. 

To model the intensities $\etc{\nu\et}$, we consider a set $\Theta$ of functions $\theta$ on $\sW\times \cX$ for which  $\theta_{i}:t\mapsto \theta(w_{i},t)$ is nonnegative and integrable on $(\cX,\cA,\mu)$ for every $i\in\{1,\ldots,n\}$. For $\theta\in \Theta$ and $i\in\{1,\ldots,n\}$, $\nu_{\theta,i}=\theta_{i}\cdot\mu=\theta(w_{i},\cdot)\cdot\mu$ thus defines a finite measure on $(\cX,\cA)$. 
We shall proceed as if there existed $\theta\et\in \Theta$ such that $\nu_{i}\et=\nu_{\theta\et,i}$ for all $i\in\{1,\ldots,n\}$.
Our goal is therefore to estimate $\theta\et$ from the observation $\bsX=(X_{1},\ldots,X_{n})$ and the knowledge of $w_{1},\ldots,w_{n}$. 

Given two finite measures $\nu$ and $\nu'$ which are dominated by $\overline \nu$ on $(\cX,\cA)$, we set 
\begin{equation}
  \label{eq-def-H}
  H^{2}(\nu,\nu')=\frac{1}{2}\int_{\cX}\pa{\sqrt{\frac{d\nu}{d\overline \nu}}-\sqrt{\frac{d\nu'}{d\overline \nu}}}^{2}d\overline \nu,
\end{equation}
the result being independent of the choice of $\overline \nu$. 
We write $\gnu\et = (\etc{\nu\et})$ and for an $n$-tuple $\gnu=(\etc{\nu})$ of finite measures on $(\cX,\cA)$ and $\theta\in \Theta$,
\[
\gnu_{\theta}=(\nu_{\theta,1},\ldots,\nu_{\theta,n})\quad \text{and}\quad \gH^{2}(\gnu,\gnu_{\theta})=\sum_{i=1}^{n}H^{2}(\nu_{i},\nu_{\theta,i}).
\]
We equip our parameter space $\Theta$ with the loss $\ell(\theta,\theta')=\gH^{2}(\gnu_{\theta},\gnu_{\theta'})$ for $\theta,\theta'\in \Theta$ and a $\sigma$-algebra that makes the corresponding balls measurable. This turns $\Theta$ into a measurable space that can be equipped with a prior $\gpi$. 

\subsection{The test statistic and the property of the posterior}
We use the test statistic defined for $\theta,\theta'\in \Theta$ by 
\begin{equation}\label{eq-TPP}
\gT_{2}(\bsX,\theta,\theta')=\sum_{i=1}^{n}\cro{\int_{\cX}\psi\pa{\sqrt{\frac{\theta_{i}'}{\theta_{i}}}}dX_{i}+\frac{1}{4}\pa{\int_{\cX}\theta_{i} d\mu-\int_{\cX}\theta_{i}'d\mu}}.
\end{equation}
A different one was proposed in Sart~\citeyearpar{MR3394490} in the frequentist setting to define $T$-estimators. 
Given a realization $\gx = (x_1,\ldots,x_n)$ where each $x_i$ is a subset of $\mathcal X$ (also viewed as a finite discrete measure),
we note that 
$|\sum_{i=1}^{n}\int_{\cX}\psi(\sqrt{\theta_{i}'/\theta_{i}})dx_{i}|
\le \sum_{i=1}^{n} |x_i|
$, hence Assumption~\ref{Ass-posterior-well-defined} is satisfied.
The following result shows Assumption~\ref{Ass-CasDet} is also met. 

\begin{prop}\label{prop-Poisson}
Let $\xi>0$ and $\theta\et$ be an arbitrary point in $\Theta$. Assumption~\ref{Ass-CasDet} is satisfied at $\theta\et$ with $\gT=\gT_{2}$, $B= \{N\le t_{\xi}\}$, 
$\ell(\theta,\theta')=\gH^{2}(\gnu_{\theta},\gnu_{\theta'})$ for $\theta,\theta'\in \Theta$ provided that  $(\lambda,\beta)\in (0,+\infty)\times (0,1)$ satisfies the constraints given by \eref{eq-constraints1}.
We recall that the values $\lambda=0.11$ and $\beta=0.09$ suit, in which case one may take $\gamma=7.9\times 10^{-4}$ and $J=13$. The expressions of the numerical constants $\frc_{i}(\lambda,\kappa)$ for $i\in\{0,1,2,3\}$ and $\kappa\in\{\beta,\overline \beta\}$ are provided in the proof of Proposition~\ref{prop-densite} in Section~\ref{prf-prop-densite} and $A_{0}(\lambda,\kappa,\gP\et,B,\theta\et)=\lambda (\kappa+1)t_{\xi}-\log(1-e^{-\xi})+\frc_{0}(\lambda,\kappa)\gH^{2}(\gnu\et,\gnu_{\theta\et})$. 
\end{prop}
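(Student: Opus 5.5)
Fix $\gtheta=(\theta_1,\theta_2,\theta_3)\in\Theta^3$ and $\kappa\in\{\beta,\overline\beta\}$. The plan mirrors the density case (Proposition~\ref{prop-densite}): first handle the corruption on $B=\{N\le t_\xi\}$, then compute the Laplace transform for genuine Poisson processes with the characterisation \eref{eq-caraPoisson}, and finally reuse the deterministic Hellinger-type inequalities from the proof of Proposition~\ref{prop-densite}.

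\emph{Step 1 (corruption).} Write $\bsX\et=(X_1\et,\ldots,X_n\et)$. Since $\psi$ takes values in $[-1,1]$, each point of $X_i\ominus X_i\et$ changes $\int\psi(\sqrt{\theta'_i/\theta_i})\,dX_i$ by at most $1$. Hence
\[
|\gT_2(\bsX,\theta,\theta')-\gT_2(\bsX\et,\theta,\theta')|\le N,
\]
and so $|\Delta_\kappa(\bsX,\gtheta)-\Delta_\kappa(\bsX\et,\gtheta)|\le(1+\kappa)N$. On $B$ this gives
\[
\sL_\kappa(\gtheta,\lambda|B)\le \frac{e^{\lambda(1+\kappa)t_\xi}}{\P(B)}\,\E\cro{e^{\lambda\Delta_\kappa(\bsX\et,\gtheta)}},
\]
and $\P(B)\ge1-e^{-\xi}$ produces the terms $\lambda(\kappa+1)t_\xi-\log(1-e^{-\xi})$ in $A_0$.

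\emph{Step 2 (Poisson computation).} We have
\[
\Delta_\kappa(\bsX\et,\gtheta)=\sum_i\int f_i\,dX_i\et+c_i,
\]
where
\[
f_i=\kappa\psi\pa{\sqrt{\theta_{2,i}/\theta_{3,i}}}-\psi\pa{\sqrt{\theta_{1,i}/\theta_{3,i}}},\qquad c_i=\tfrac14\int\pa{\kappa(\theta_{3,i}-\theta_{2,i})-(\theta_{3,i}-\theta_{1,i})}\,d\mu .
\]
The function $f_i$ is bounded by $1+\kappa$. By independence of the $X_i\et$ and \eref{eq-caraPoisson},
\[
\E e^{\lambda\Delta_\kappa}=\exp\Bigl[\sum_i\Bigl(\int(e^{\lambda f_i}-1)\,d\nu_i\et+\lambda c_i\Bigr)\Bigr].
\]
Since $\lambda f_i\le\lambda(1+\kappa)$ and $\phi$ is increasing,
\[
e^{\lambda f_i}-1\le\lambda f_i+\tfrac{\phi(\lambda(1+\kappa))}{2}\lambda^2f_i^2 .
\]
This is exactly the expansion used for \eref{eq-debase}, with $\nu_i\et$ now playing the role of $P_i\et$. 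The exponent is therefore bounded by
\[
\lambda\sum_i\Bigl[\int f_i\,d\nu_i\et+c_i\Bigr]+\tfrac{\lambda^2\phi(\lambda(1+\kappa))}{2}\sum_i\int f_i^2\,d\nu_i\et .
\]
Unlike the density case, no $\log(1+x)\le x$ step is needed.

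\emph{Step 3 (deterministic bounds).} Here the key inputs are the finite-measure analogues of the two inequalities underlying Proposition~\ref{prop-densite}. For finite measures $\nu\et,\nu=t\cdot\mu,\nu'=t'\cdot\mu$, the first is
\[
\int\psi\pa{\sqrt{t'/t}}\,d\nu\et+\tfrac14\pa{\nu(\cX)-\nu'(\cX)}\le aH^2(\nu\et,\nu)-bH^2(\nu\et,\nu'),
\]
and the second is
\[
\int\psi^2\pa{\sqrt{t'/t}}\,d\nu\et\le C\pa{H^2(\nu\et,\nu)+H^2(\nu\et,\nu')}.
\]
In the probability case the mass term vanishes. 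For general masses I would rederive these inequalities with the identity
\[
H^2(\nu,\nu')=\tfrac12\bigl(\nu(\cX)+\nu'(\cX)\bigr)-\int\sqrt{tt'}\,d\mu ,
\]
which is exactly where the $\frac14$ correction in \eref{eq-TPP} is needed to cancel total masses. I expect this to be the main obstacle: one must check that the same constants as in the density proof survive, so that the $\frc_i(\lambda,\kappa)$ of Proposition~\ref{prop-densite} can be reused verbatim.

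\emph{Step 4 (conclusion).} Once these inequalities hold, apply them with $(\nu,\nu')=(\nu_{\theta_3,i},\nu_{\theta_2,i})$ and $(\nu_{\theta_3,i},\nu_{\theta_1,i})$, sum over $i$, and replace $\gH^2(\gnu\et,\gnu_{\theta_j})$ by $\ell(\theta\et,\theta_j)$ up to a multiple of $\gH^2(\gnu\et,\gnu_{\theta\et})$. This uses $(x+y)^2\le(1+\epsilon)x^2+(1+\epsilon^{-1})y^2$ for $H$, a distance. The resulting coefficients, under \eref{eq-constraints1}, are the same $\frc_0,\ldots,\frc_3$ as in Proposition~\ref{prop-densite}, which yields \eref{BLaplace0} with the stated $A_0$.
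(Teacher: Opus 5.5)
Your proposal follows essentially the same route as the paper: the corruption bound $(\kappa+1)N$ on $B$ (giving the $\lambda(\kappa+1)t_{\xi}-\log(1-e^{-\xi})$ terms), the Poisson Laplace functional \eref{eq-caraPoisson} combined with the $\phi$-expansion, the two Hellinger inequalities for the mean and second moment, and the factor-$2$ triangle inequalities to pass from $\gnu\et$ to $\gnu_{\theta\et}$, which reproduce the constants $\frc_{0},\ldots,\frc_{3}$ of Proposition~\ref{prop-densite}. The step you flag as the main obstacle is already settled by Proposition~\ref{prop00}: it holds for arbitrary finite measures $Q,Q'$ and $R$, includes exactly the $\frac14$ mass correction, and has the same constants $5/2$, $1/2$ and $2$. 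One detail to fix: for the subtracted $\theta_{1}$ term, use the antisymmetry $\psi(1/u)=-\psi(u)$ and apply the inequality with $(\nu,\nu')=(\nu_{\theta_{1},i},\nu_{\theta_{3},i})$, not $(\nu_{\theta_{3},i},\nu_{\theta_{1},i})$.
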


Applying Proposition~\ref{prop-Poisson}, we derive from Theorem~\ref{thmCasDet} the following result. 
\begin{thm}\label{thm-Poisson}
Let $\xi>\ln 3$, $\theta\et\in \Theta$ and $(\lambda,\beta)\in (0,+\infty)\times (0,1)$ that satisfies the constraints given by~\eref{eq-constraints1}. An estimator $\widehat \theta$ drawn with the posterior distribution $\gpi_{\bsX}$ defined by \eref{def-piX} with $\gT=\gT_{2}$ given in \eref{eq-TPP}, possesses the following property. With a probability at least $1-3e^{-\xi}$
\begin{equation}\label{thmP-00}
C\gH^{2}\pa{\gnu_{\theta\et},\gnu_{\widehat \theta}}\le \gH^{2}\pa{\gnu\et,\gnu_{\theta\et}}+r(\gpi,\theta\et)+t_{\xi}+\xi
\end{equation}
where $C=C(\lambda,\beta)>0$ is a numerical constant that only depends on the chosen values of $\lambda$ and $\beta$. In particular, with a probability at least $1-3e^{-\xi}$
\begin{equation}\label{thmP-01}
C'\gH^{2}\pa{\gnu\et,\gnu_{\widehat \theta}}\le \inf_{\theta\et\in \Theta}\cro{\gH^{2}\pa{\gnu\et,\gnu_{\theta\et}}+r(\gpi,\theta\et)}+t_{\xi}+\xi
\end{equation}
for some $C'=C'(\lambda,\beta)>0$.
\end{thm}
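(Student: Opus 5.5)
The plan is to combine Proposition~\ref{prop-Poisson} with inequality \eref{eq-thmCasDet02} of Theorem~\ref{thmCasDet}, and then obtain \eref{thmP-01} from \eref{thmP-00} by a triangle-type inequality for $\gH$.

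\emph{Step 1: apply Theorem~\ref{thmCasDet}.} Fix $\theta\et\in\Theta$ and $\xi>\log 3$, and let $B=\{N\le t_{\xi}\}$. Since $t_{\xi}$ is the $(1-e^{-\xi})$-quantile of $N$, we have $\P(B)\ge 1-e^{-\xi}>0$. By Proposition~\ref{prop-Poisson}, Assumption~\ref{Ass-CasDet} holds at $\theta\et$ with this $B$, with the constants $\frc_{i}(\lambda,\kappa)$ of Proposition~\ref{prop-densite}, and with
\[
A_{0}(\gP\et,B,\theta\et)=\lambda t_{\xi}-\log(1-e^{-\xi})+\bar\frc_{0}\,\gH^{2}(\gnu\et,\gnu_{\theta\et}).
\]
This comes from averaging over $\kappa\in\{\beta,\overline\beta\}$ in \eref{def-e-r}. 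Since $\beta+\overline\beta=2$, the coefficient of $t_{\xi}$ is $\lambda(\beta+1+\overline\beta+1)/2=2\lambda$ (the display above has a slip; the bound $A_0\le 2\lambda t_\xi+\cdots$ is what we use), and $\bar\frc_{0}=(\frc_{0}(\lambda,\beta)+\frc_{0}(\lambda,\overline\beta))/2$. Inequality \eref{eq-thmCasDet02} then gives, with probability at least $1-3e^{-\xi}$,
\[
2^{-J}\gH^{2}(\gnu_{\theta\et},\gnu_{\widehat\theta})\le r(\gpi,\theta\et)+\gamma^{-1}\bigl(2\lambda t_{\xi}-\log(1-e^{-\xi})+\bar\frc_{0}\gH^{2}(\gnu\et,\gnu_{\theta\et})+1+\xi\bigr).
\]

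\emph{Step 2: absorb the constants.} Because $\xi>\log 3$, we have $-\log(1-e^{-\xi})\le\log(3/2)\le \xi$, and $1\le \xi/\log 3$. Hence the right-hand side is at most $C_{1}\bigl[\gH^{2}(\gnu\et,\gnu_{\theta\et})+r(\gpi,\theta\et)+t_{\xi}+\xi\bigr]$, where $C_{1}$ depends only on $\gamma,\lambda,\bar\frc_0$, and so only on $(\lambda,\beta)$. If $\gamma\le 1$ the coefficient $1$ in front of $r(\gpi,\theta\et)$ is also dominated. Taking $C=2^{-J}/C_{1}$ yields \eref{thmP-00}. We also need $\bar\frc_0\ge0$; otherwise the term can simply be dropped.

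\emph{Step 3: deduce \eref{thmP-01}.} $H$ is an $\L_{2}$-distance between square roots of densities, and $\gH$ is the $\ell_{2}$-sum of the $H$'s, so $\gH$ satisfies the triangle inequality. Therefore
\[
\gH^{2}(\gnu\et,\gnu_{\widehat\theta})\le 2\gH^{2}(\gnu\et,\gnu_{\theta\et})+2\gH^{2}(\gnu_{\theta\et},\gnu_{\widehat\theta}),
\]
and combining this with \eref{thmP-00} gives the bound for each fixed $\theta\et$.

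\emph{Main obstacle.} The only delicate point is passing to the infimum over $\theta\et$. The event on which \eref{thmP-00} holds depends on $\theta\et$, so we cannot intersect over uncountably many of them. The way around this is that the right-hand side of \eref{thmP-01} does not depend on $\widehat\theta$. So for any $\epsilon>0$, pick a deterministic $\theta_{\epsilon}$ that achieves the infimum within $\epsilon$, or within a factor $2$ if the infimum is $0$, and apply the bound at that single point. This gives the claim with probability at least $1-3e^{-\xi}$, with the factor absorbed into $C'$. The possibly infinite case is trivial.
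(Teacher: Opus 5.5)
Your proposal is correct and follows the paper's route: the paper obtains this theorem by feeding the constants of Proposition~\ref{prop-Poisson} into \eref{eq-thmCasDet02}, averaging via \eref{def-e-r} (so the coefficient of $t_{\xi}$ is $2\lambda$, as you note), and absorbing everything into $C(\lambda,\beta)$; \eref{thmP-01} then follows from $\gH^{2}(\gnu\et,\gnu_{\widehat\theta})\le 2\gH^{2}(\gnu\et,\gnu_{\theta\et})+2\gH^{2}(\gnu_{\theta\et},\gnu_{\widehat\theta})$. One small slip in your last step: a near-minimiser within a factor $2$ exists when the infimum is positive, not when it is $0$; in the zero case pick $\theta$ whose value is at most the infimum plus $\xi$ and absorb the resulting factor $2$ into $C'$, or, with no loss at all, take a minimising sequence, so that the events decrease, and use continuity of $\P$ from above.
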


\subsection{An example}\label{sect-ex}
In this example $\sW$ denotes the unit sphere of $\R^{k}$, where the dimension $k\ge 2$ is typically large, and $\cX$ a Borel subset of $\R^{d}$, $d\ge 1$, equipped with the Lebesgue measure $\mu$. We observe $n$ finite point processes $\etc{X}$ on $\cX$ and assume that $X_{i}$ is close to a Poisson process $X_{i}\et$ on $\cX$ in the sense  $N_{i}=|X_{i}\ominus X_{i}\et|$ is small enough, at least for most of the indices $i$. The $X_{i}\et$ are assumed to be independent and when $X_{i}$ is associated with a covariate $w_{i}\in\sW$, we assume that the intensity $\nu_{i}\et$ of $X_{i}\et$ admits a density $\theta_{i}\et=\theta\et(w_{i},\cdot)$ with respect to $\mu$ which is of the form $\theta_{i}\et:x\mapsto \pa{\rho\et\scal{w_{i}}{a\et}s\et(x)}^{2}$ where  $\rho\et$ is a positive number, $s\et$ belongs to the unit sphere $\S$ of the Hilbert space $\H=\L_{2}(\cX,\cA,\mu)$ whilst $a\et$ belongs to $\sW$. Note that the average cardinality of $X_{i}\et$ is $\int_{\cX}\pa{\rho\et\scal{w_{i}}{a\et}s\et(x)}^{2}d\mu(x)=\pa{\rho\et\scal{w_{i}}{a\et}}^{2}$. It therefore ranges in $[0,(\rho\et)^{2}]$ and depends on the angle between the covariate $w_{i}$ and the direction $a\et$. The Hilbert norm of $\H$ will be denoted $\norm{\cdot}$ throughout this section.

Since $k$ is assumed to be large, we wish to reduce the dimensionality of the problem by assuming some sparsity on the parameter $a\et$. This means that the parameter $a\et$ belongs to a set of the form $A_{J}=\{a\in\sW|\; a_{j}=0, \text{ for } j\not\in J\}$ for a certain nonempty subset $J=J\et$ of $\{1,\ldots,k\}$ the cardinality of which is expected to be small compared to $k$.  We denote by $\cJ$ the class of these nonempty subsets of $\{1,\ldots,k\}$. 

To approximate $s\et$, we consider an at most countable collection $\{V_{m},\; m\in\cM\}$ of finite dimensional linear subspaces of $\H$ that we index with a set denoted by $\cM$. We assume that for every $m\in\cM$, the dimension $D_{m}$ of $V_{m}$ is at least one so that its unit sphere $S_{m}$ is non-empty. 

Our parameter space $\Theta$ therefore consists of these functions $\theta$ on $\sW\times\cX$ which are of the form $\theta_{\rho,a,s}:(w,x)\mapsto \pa{\rho\scal{a}{w}s(x)}^{2}$ where the parameter $(\rho,a,s)$ belongs to one of the sets $\Gamma_{J,m}=(0,+\infty)\times A_{J}\times S_{m}$ with $(J,m)\in\cJ\times \cM$, that is,
\[
\Theta=\ac{\theta_{\rho,a,s},\; (\rho, a,s)\in \bigcup_{(J,m)\in\cJ\times\cM}\Gamma_{J,m}}.
\]
To keep our notation as simple as possible, we denote by $\gnu_{\rho,a,s}$ the $n$-tuple of intensities $(\nu_{\theta,1},\ldots,\nu_{\theta,n})$  when $\theta=\theta_{\rho,a,s}$. For $(\rho,a,s)$ and $(\varrho,b,t)$ in $(0,+\infty)\times \sW\times \S$, we note that 
\begin{align*}
2\gH^{2}\pa{\gnu_{\rho,a,s},\gnu_{\varrho,a,s}}&=\pa{\rho-\varrho}^{2}\sum_{i=1}^{n}\int_{\cX}\pa{\sqrt{\pa{\scal{a}{w_{i}}s(x)}^{2}}}^{2}d\mu(x)\\
&=\pa{\rho-\varrho}^{2}\sum_{i=1}^{n}\ab{\scal{a}{w_{i}}}^{2}\int_{\cX}s^{2}(x)d\mu(x)\le n\pa{\rho-\varrho}^{2}.
\end{align*}
We obtain similarly that  

\begin{align}
2\gH^{2}\pa{\gnu_{\rho,a,s},\gnu_{\rho,a,t}}&=\rho^{2}\sum_{i=1}^{n}\int_{\cX}\pa{\sqrt{\pa{\scal{a}{w_{i}}s(x)}^{2}}-\sqrt{\pa{\scal{a}{w_{i}}t(x)}^{2}}}^{2}d\mu(x)\nonumber\\
&=\rho^{2}\sum_{i=1}^{n}\ab{\scal{a}{w_{i}}}^{2}\int_{\cX}\pa{\ab{s(x)}-\ab{t(x)}}^{2}d\mu\nonumber\\
&\le \rho^{2}\norm{s-t}^{2}\sum_{i=1}^{n}\ab{\scal{a}{w_{i}}}^{2}\le n \rho^{2}\norm{s-t}^{2},\label{H-st}
\end{align}
whilst, since $\norm{s}=1$, 
\begin{align*}
2\gH^{2}\pa{\gnu_{\rho,a,s},\gnu_{\rho,b,s}}&=\rho^{2}\sum_{i=1}^{n}\int_{\cX}\pa{\sqrt{\pa{\scal{a}{w_{i}}s(x)}^{2}}-\sqrt{\pa{\scal{b}{w_{i}}s(x)}^{2}}}^{2}d\mu(x)\\
&=\rho^{2}\sum_{i=1}^{n}\pa{\ab{\scal{a}{w_{i}}}-\ab{\scal{b}{w_{i}}}}^{2}\le  \rho^{2}\sum_{i=1}^{n}\ab{\scal{a-b}{w_{i}}}^{2}\le n\rho^{2}\ab{a-b}^{2}.
\end{align*}
Consequently, for every $(\rho,a,s)$ and $(\varrho,b,t)$ in $(0,+\infty)\times \sW\times \S$
\begin{align}
\gH^{2}\pa{\gnu_{\rho,a,s},\gnu_{\varrho,b,t}}&\le \pa{\gH\pa{\gnu_{\rho,a,s},\gnu_{\rho,b,s}}+\gH\pa{\gnu_{\rho,b,s},\gnu_{\rho,b,t}}+\gH\pa{\gnu_{\rho,b,t},\gnu_{\varrho,b,t}}}^{2}\nonumber\\
&\le 3\cro{\gH^{2}\pa{\gnu_{\rho,a,s},\gnu_{\rho,b,s}}+\gH^{2}\pa{\gnu_{\rho,b,s},\gnu_{\rho,b,t}}+\gH^{2}\pa{\gnu_{\rho,b,t},\gnu_{\varrho,b,t}}}\nonumber\\
&\le \frac 32 n\cro{\rho^{2}\ab{a-b}^{2}+\rho^{2}\norm{s-t}^{2}+\ab{\rho-\varrho}^{2}}.\label{eq-H-E}
\end{align}

Let us now define a prior on $\Theta$. 
Given $(J,m)\in\cJ\times \cM$, we consider an orthonormal basis $\phi_{1,m},\ldots,\phi_{D_{m},m}$ of $V_{m}$ and three independent random variables $R$, $\bsY_{\!\!J}=(Y_{1,J},\ldots,Y_{k,J})$ and $\bsZ_{m}=(Z_{1,m},\ldots Z_{D_{m},m})$ with values in $(0,+\infty)$, $\sW$ and the unit sphere of $\R^{D_{m}}$ respectively. The random variable $R$ has density $x\mapsto (2/\pi)(1+x^{2})^{-1}$ on $(0,+\infty)$,  $\bsZ_{m}$ and $(Y_{j,J})_{j\in J}$ are uniformly distributed on the unit spheres of $\R^{D_{m}}$ and $\R^{|J|}$ respectively whilst $Y_{j,J}=0$ for $j\not\in J$. We denote by $\gpi_{J,m}$ the image of the distribution of the random variable $(R,Y_{\!\!J},\sum_{j=1}^{D_{m}}Z_{j,m}\phi_{j,m})\in \Gamma_{J,m}$ by the mapping $(\rho,a,s)\mapsto \theta_{\rho,a,s}$ which yields a probability measure on $\Theta$. Given a collection $\{L_{J,m},\; (J,m)\in\cJ\times \cM\}$ of nonnegative weights that satisfy $\sum_{(J,m)\in\cJ\times \cM}e^{-L_{J,m}}=1$,  we define our prior on $\Theta$ as 
\begin{equation}\label{eq-piex}
\gpi=\sum_{(J,m)\in\cJ\times \cM}e^{-L_{J,m}}\gpi_{J,m}.
\end{equation}

The following proposition holds. 
\begin{prop}\label{prop-piex}
Let $(J,m)\in \cJ\times\cM$ and $(\rho,a,s)\in \Gamma_{J,m}=(0,+\infty)\times A_{J}\times S_{m}$. Then, 
\begin{align}
\overline r\pa{\gpi,\theta_{\rho,a,s}}&\le \frac{|J|+D_{m}}{\gamma}\log\pa{e+3(1+\rho)\sqrt{\frac{ \gamma n}{(|J|+D_{m})\vee L_{J,m}}}}\label{eq-prop-piex}\\
&\hspace{1cm }+\frac{1}{\gamma}\log\pa{\frac{\pi^{2}(1+\rho)\sqrt{|J|D_{m}}}{2}}+\frac{L_{J,m}}{\gamma}\nonumber.
\end{align}
Besides, if $s\et$ is an arbitrary point on the unit sphere of $\H$,
\begin{equation}\label{eq-prop-approx}
\inf_{s\in S_{m}}\gH^{2}\pa{\gnu_{\rho,a,s\et},\gnu_{\rho,a,s}}\le n\rho^{2}\inf_{v\in V_{m}}\norm{s\et-v}^{2}=n\inf_{v\in V_{m}}\norm{\rho s\et-v}^{2}.
\end{equation}
\end{prop}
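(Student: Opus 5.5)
The plan is to bound $\overline r(\gpi,\theta_{\rho,a,s})$ through the hierarchical inequality \eref{eq-r-Hier}. Since $\gpi\pa{\sB(\theta,r)}\ge e^{-L_{J,m}}\gpi_{J,m}\pa{\sB(\theta,r)}$, it suffices to find a radius $r_{0}$ with $\gpi_{J,m}\pa{\sB(\theta_{\rho,a,s},r_{0})}\ge e^{-\gamma r_{0}}$ and then add $L_{J,m}/\gamma$. To bound $\gpi_{J,m}$ of a ball from below, we use \eref{eq-H-E}, which turns the ball into a product of Euclidean neighbourhoods. With $\epsilon>0$ to be chosen, the ball of radius $r=\frac{3}{2}n\epsilon^{2}\pa{2\rho^{2}+1}$ contains every $\theta_{\varrho,b,t}$ with $|\varrho-\rho|\le\epsilon$, $|b-a|\le \epsilon$ (with $b\in A_{J}$) and $\norm{t-s}\le\epsilon$ (with $t\in S_{m}$). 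We can replace this by the simpler containment with $\rho^{2}+1\le(1+\rho)^{2}$ absorbed into the constant. By independence of $R,\bsY_{\!\!J},\bsZ_{m}$, we get
\[
\gpi_{J,m}\pa{\sB(\theta,r)}\ge \P\pa{|R-\rho|\le\epsilon}\,\P\pa{|\bsY_{\!\!J}-a|\le\epsilon}\,\P\pa{|\bsZ_{m}-z|\le\epsilon},
\]
where $z$ is the coordinate vector of $s$ in the basis $(\phi_{j,m})$. The isometry $\norm{\sum z_{j}\phi_{j,m}-\sum z'_{j}\phi_{j,m}}=|z-z'|$ reduces the third factor to a spherical cap in $\R^{D_{m}}$.

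Next, each factor is bounded separately. For $R$, the density is at least $(2/\pi)(1+(\rho+\epsilon)^{2})^{-1}$ on an interval of length at least $\epsilon\wedge\rho+\epsilon\ge\epsilon$ in $(0,\infty)$. For $\epsilon\le 1$ this gives roughly $\epsilon/(\pi(1+\rho)^{2})$. For the uniform law on the unit sphere of $\R^{p}$, the standard cap estimate gives $\sigma\pa{\{u:|u-u_{0}|\le\epsilon\}}\ge c\,p^{-1/2}(\epsilon/C)^{p-1}$ for $\epsilon\le 2$. This follows by comparing the cap area with the volume of a $(p-1)$-ball of radius about $\epsilon$, and using $\Gamma$-function ratios bounded by $\sqrt{p}$. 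That estimate is the source of the $\sqrt{|J|D_{m}}$ and $\pi^{2}/2$ factors. Multiplying the three bounds, $-\log\gpi_{J,m}$ is at most about
\[
(|J|+D_{m})\log(C/\epsilon)+\log\pa{\pi^{2}(1+\rho)^{2}\sqrt{|J|D_{m}}/2}.
\]

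Finally, we choose $\epsilon$ to balance the terms. Taking $\epsilon=\frac{1}{1+\rho}\sqrt{((|J|+D_{m})\vee L_{J,m})/(\gamma n)}$ truncated at $1$, the radius becomes $r\lesssim((|J|+D_{m})\vee L_{J,m})/\gamma$ up to constants. The condition $\gamma r\ge -\log\gpi_{J,m}\pa{\sB}+L_{J,m}$ is then met with $r$ equal to the right-hand side of \eref{eq-prop-piex}, because that right-hand side dominates both the needed radius and $-\log\gpi\pa{\sB}/\gamma$. The $e+$ inside the logarithm handles the truncation $\epsilon\le 1$. Monotonicity of $r\mapsto\gpi\pa{\sB(\theta,r)}$ then finishes the bound.

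The main obstacle is the bookkeeping of constants. This covers the sharp spherical cap lower bound, and also making the radius from \eref{eq-H-E}, which carries a factor $\rho^{2}$, fit the stated form with $(1+\rho)$ inside a single logarithm. The second claim \eref{eq-prop-approx} is easy. Given $v\in V_{m}$, take $s=v/\norm{v}\in S_{m}$ (any point of $S_{m}$ if $v=0$). By \eref{H-st}, $\gH^{2}\le \frac{n}{2}\rho^{2}\norm{s\et-s}^{2}$, and $\norm{s\et-v/\norm{v}}\le 2\norm{s\et-v}$ since $\norm{s\et}=1$. Hence the bound is $2n\rho^{2}\norm{s\et-v}^{2}\cdot\frac12$, and taking the infimum over $v$ and using homogeneity gives $n\inf_{v}\norm{\rho s\et-v}^{2}$.
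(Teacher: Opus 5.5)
Your overall architecture is the paper's: a product neighbourhood obtained from \eqref{eq-H-E}, independence of $R,\bsY_{\!\!J},\bsZ_{m}$, a Cauchy-type bound and two cap bounds, then $\gpi\ge e^{-L_{J,m}}\gpi_{J,m}$. Note that the black-box use of \eqref{eq-r-Hier} announced at the start would give $\sqrt{\gamma n/(|J|+D_{m})}$ instead of $\sqrt{\gamma n/((|J|+D_{m})\vee L_{J,m})}$. Only the route of your last paragraph, where $L_{J,m}/\gamma$ is already inside the radius, can match the statement.

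\textbf{The gap in the first claim: a common window.} You use the \emph{same} window $\epsilon$ for $\varrho$, $b$ and $t$. In \eqref{eq-H-E} the directional terms carry the factor $\rho^{2}$ but the radial term does not. The constraint $\frac32 n\epsilon^{2}(2\rho^{2}+1)\le r$ therefore forces $\epsilon\lesssim\sqrt{r/n}/\rho$. Then $\P(|R-\rho|\le\epsilon)\approx 4\epsilon/(\pi\rho^{2})$ costs about $2\log\rho$. This is the $(1+\rho)^{2}$ you yourself obtain in the second logarithm, whereas \eqref{eq-prop-piex} only allows $(1+\rho)$.

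This is not a matter of bookkeeping. Take $|J|=D_{m}=1$, $n=1$, $L_{J,m}=\gamma(1+\rho)^{2}$ and let $\rho\to\infty$.
\begin{itemize}
\item $\gamma$ times the right-hand side of \eqref{eq-prop-piex} equals $L_{J,m}+\log(1+\rho)+O(1)$.
\item Any admissible $\epsilon$ satisfies $\epsilon^{2}\le\frac13+o(1)$.
\item The $\gpi_{J,m}$-mass of your product set is at most $\P(|R-\rho|\le\epsilon)\le C\rho^{-2}$.
\end{itemize}
So the needed inequality, $L_{J,m}$ minus the log of that mass being at most $\gamma$ times the right-hand side, fails.

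With your specific $\epsilon$ there is a second problem: the radius $\frac32 n\epsilon^{2}(2\rho^{2}+1)$ can approach $3((|J|+D_m)\vee L_{J,m})/\gamma$, which exceeds the right-hand side. So the claim that the right-hand side ``dominates'' was never checked.

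\textbf{The fix: an anisotropic window, as in the paper.} Take $|\varrho-\rho|\le\zeta=\sqrt{2r/(9n)}$, neither divided by $\rho$ nor truncated, and $|b-a|,\|t-s\|\le\zeta/\rho$. Write $d=|J|+D_{m}$.
\begin{itemize}
\item The radial factor satisfies $\P(|R-\rho|\le\zeta)\ge\frac{2}{\pi}\int_{\rho}^{\rho+\zeta}(1+x)^{-2}dx=\frac{2}{\pi}\big[(1+\rho)\big(1+(1+\rho)/\zeta\big)\big]^{-1}$.
\item The two caps follow from \eqref{eq-cap3} with $(\zeta/\rho)\wedge 1$.
\item The inequality $\log\big(\frac{9n\rho^{2}}{2r}\vee1\big)\le2\log\big(1+(1+\rho)\sqrt{9n/(2r)}\big)$ merges all terms into $d\log\big(\sqrt2\,(1+(1+\rho)\sqrt{9n/(2r)})\big)+\log\big(\pi^{2}(1+\rho)\sqrt{|J|D_{m}}/2\big)$.
\item For $r$ not smaller than the right-hand side of \eqref{eq-prop-piex}, one has $\gamma r\ge d\vee L_{J,m}$. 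Hence $\sqrt2\,(1+(1+\rho)\sqrt{9n/(2r)})\le e+3(1+\rho)\sqrt{\gamma n/(d\vee L_{J,m})}$.
\end{itemize}

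\textbf{The second claim.} Your proof of \eqref{eq-prop-approx} has a factor-of-$2$ slip. From $\big\|s^{\star}-v/\|v\|\big\|\le2\|s^{\star}-v\|$ you get $\|s^{\star}-s\|^{2}\le4\|s^{\star}-v\|^{2}$. By \eqref{H-st} this gives only $\gH^{2}\le2n\rho^{2}\|s^{\star}-v\|^{2}$, which is twice the stated bound. Moreover, the squared-norm constant $2$ is false for arbitrary $v\in V_{m}$: take $s^{\star}\in V_{m}$ and $v=-\delta s^{\star}$ with $\delta$ small.

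You must take $v=s'$, the orthogonal projection of $s^{\star}$ onto $V_{m}$ (any unit vector of $V_m$ if $s'=0$). Then
\[
\big\|s^{\star}-s'/\|s'\|\big\|^{2}=2(1-\|s'\|)=\frac{2\|s^{\star}-s'\|^{2}}{1+\|s'\|}\le2\inf_{v\in V_{m}}\|s^{\star}-v\|^{2},
\]
which is Lemma~\ref{UnitSphere}. Combined with \eqref{H-st}, this gives exactly \eqref{eq-prop-approx}.
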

We deduce from Theorem~\ref{thm-Poisson} the following result. 
\begin{thm}\label{thm-Poissonex}
Let $\xi>\ln 3$, $(\lambda,\beta)\in (0,+\infty)\times (0,1)$ that satisfies the constraints given by~\eref{eq-constraints1}. An estimator $\widehat \theta$ drawn with the posterior distribution $\gpi_{\bsX}$ defined by \eref{def-piX} with $\gT=\gT_{2}$ given in \eref{eq-TPP}, possesses the following property. With a probability at least $1-3e^{-\xi}$
\begin{align*}
&Cn^{-1}\gH^{2}\pa{\gnu_{\theta_{\rho\et,a\et,s\et}},\gnu_{\widehat \theta}}\\
&\le \inf_{m\in\cM}\cro{\inf_{v\in V_{m}}\norm{\rho\et s\et-v}^{2}+\frac{|J\et|+D_{m}}{n}\log_{+}\cro{\frac{n(1+\rho\et)^{2}}{(|J\et|+D_{m})\vee L_{J\et,m}}}+\frac{L_{J\et,m}}{n}}+\frac{t_{\xi}+\xi}{n},
\end{align*}
where $C=C(\lambda,\beta)>0$ is a numerical constant that only depends on the chosen values of $\lambda$ and $\beta$. \end{thm}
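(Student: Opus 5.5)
Theorem~\ref{thm-Poissonex} follows from Theorem~\ref{thm-Poisson} (inequality \eref{thmP-00}) combined with Proposition~\ref{prop-piex} and \eref{eq-r-rbar}. Let $\theta\et=\theta_{\rho\et,a\et,s\et}$ be the target. The difficulty is that $s\et$ lies on the unit sphere $\S$ but not necessarily in any $S_{m}$, so $\theta\et$ need not belong to $\Theta$. I will therefore apply \eref{thmP-00} at an approximating parameter. Fix $m\in\cM$ and choose $\bar s\in S_{m}$, so that $\bar\theta=\theta_{\rho\et,a\et,\bar s}\in\Theta$ with $(\rho\et,a\et,\bar s)\in\Gamma_{J\et,m}$. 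Theorem~\ref{thm-Poisson} at $\bar\theta$, with $\gnu\et=\gnu_{\theta\et}$, gives with probability at least $1-3e^{-\xi}$
\[
C\gH^{2}\pa{\gnu_{\bar\theta},\gnu_{\widehat\theta}}\le \gH^{2}\pa{\gnu_{\theta\et},\gnu_{\bar\theta}}+r(\gpi,\bar\theta)+t_{\xi}+\xi .
\]
Since $\gH$ is a Hilbert-type distance on $n$-tuples of measures and therefore satisfies the triangle inequality,
\[
\gH^{2}\pa{\gnu_{\theta\et},\gnu_{\widehat\theta}}\le 2\gH^{2}\pa{\gnu_{\theta\et},\gnu_{\bar\theta}}+2\gH^{2}\pa{\gnu_{\bar\theta},\gnu_{\widehat\theta}} .
\]
Hence $C\gH^{2}(\gnu_{\theta\et},\gnu_{\widehat\theta})$ is bounded by a constant times $\gH^{2}(\gnu_{\theta\et},\gnu_{\bar\theta})+r(\gpi,\bar\theta)+t_{\xi}+\xi$.

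The two terms on the right are handled separately. For the approximation term, \eref{eq-prop-approx} with $\rho=\rho\et$ and $a=a\et$ lets us choose $\bar s\in S_{m}$ so that $\gH^{2}(\gnu_{\theta\et},\gnu_{\bar\theta})\le 2n\inf_{v\in V_{m}}\norm{\rho\et s\et-v}^{2}$. The factor $2$ only covers the case where the infimum is not attained. For the complexity term, \eref{eq-r-rbar} gives $r(\gpi,\bar\theta)\le\overline r(\gpi,\bar\theta)$, which \eref{eq-prop-piex} bounds with $\rho=\rho\et$, $J=J\et$ and $m$.

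The remaining step, and the only delicate one, is to reduce the right-hand side of \eref{eq-prop-piex} to the form stated in the theorem. Write $D=|J\et|+D_{m}$ and $L=L_{J\et,m}$, and treat $\gamma$ as a numerical constant. The first logarithm is $\log(e+3(1+\rho\et)\sqrt{\gamma n/(D\vee L)})$. It is at most a constant times $\log_{+}[n(1+\rho\et)^{2}/(D\vee L)]$, using $\log(e+c\sqrt{x})\le C(1+\log_{+}x)\le 2C\log_{+}x$ since $\log_{+}\ge 1$.

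The second term is $\log(\pi^{2}(1+\rho\et)\sqrt{|J\et|D_{m}}/2)$, and this is where I expect the main obstacle. Because $\sqrt{|J\et|D_{m}}\le D$, the $\log D$ part is absorbed into $D\log_{+}(\cdot)$, using $\log D\le D$. The $\log(1+\rho\et)$ part needs a case split:
\begin{itemize}
\item If $(1+\rho\et)^{2}\le n$, then $\log(1+\rho\et)\le\log n$. This is at most $\log(n/(D\vee L))+\log(D\vee L)$, which is bounded by $D\log_{+}(\cdot)+L$ up to constants.
\item If $(1+\rho\et)^{2}>n$, then $\log(1+\rho\et)\le\log_{+}[n(1+\rho\et)^{2}/(D\vee L)]+\tfrac12\log(D\vee L)$, and the same absorption applies. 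Note that $D\ge 2$.
\end{itemize}
The last term, $L/\gamma$, gives $L_{J\et,m}/n$ after the division described below.

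To finish, divide by $n$, take the infimum over $m\in\cM$ (the bound holds for each fixed $m$, and $\widehat\theta$ does not depend on $m$), and absorb all numerical constants into $C=C(\lambda,\beta)$. One subtlety remains: the infimum over $m$ is taken outside a probability statement. I will handle it by fixing a near-minimising $m$ deterministically before applying Theorem~\ref{thm-Poisson}. This is legitimate because the minimiser depends only on the unknown deterministic quantities, not on the data.
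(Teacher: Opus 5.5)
Your proposal is correct and is essentially the deduction the paper has in mind; the paper only says the result follows from Theorem~\ref{thm-Poisson}. Applying \eref{thmP-00} at a deterministically chosen $\bar\theta=\theta_{\rho\et,a\et,\bar s}$ with $\bar s\in S_{m}$, and then using the triangle inequality, simply re-derives \eref{thmP-01} with the infimum restricted to such points; \eref{eq-prop-approx}, \eref{eq-r-rbar} and \eref{eq-prop-piex}, together with your manipulation of the logarithms, then give the stated bound (the case split on $\log(1+\rho\et)$ is unnecessary, since $2\log(1+\rho\et)\le\log\pa{n(1+\rho\et)^{2}/(D\vee L)}+\log(D\vee L)$ for every $n\ge 1$, but it does no harm).
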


For $m\in \cM$, the quantity $\inf_{v\in V_{m}}\norm{\rho\et s\et-v}^{2}$ corresponds to the approximation  of $\rho\et s\et$ by an element of our linear space $V_{m}$. Up to the logarithmic term, $(|J\et|+D_{m})/n$ is the statistical error we would get for estimating the parameter $(\rho\et,a\et,s\et)$ if it belonged to $\Gamma_{J\et,m}$. The quantity $t_{\xi}$ accounts for the robustness of the procedure with respect to a possible departure from the assumption that our data $\etc{X}$ are truly independent Poisson point processes. Finally, the term $L_{J\et,m}$ depends on our choice of the weights. For example, denoting by $K_{D}$ the quantity $\ab{\{m\in\cM,\; D_{m}=D\}}$ for $D\ge 1$,  one may take for every $(J,m)\in\cJ\times \cM$, $L_{J,m}=L_{1,|J|}+L_{2,D_{m}}$ with, for $j\in\{1,\ldots,k\}$ and $D\ge 1$,  
\[
L_{1,j}=\log\binom{k}{j}+\log k\; \text{ and }\; L_{2,D}=\log\pa{K_{D}\vee 1}+D+\log\pa{\sum_{D'\ge 1}(K_{D'}\wedge 1)e^{-D'}}.
\]
Since $K_{D}/(K_{D}\vee 1)=K_{D}\wedge 1$ for every $D\ge 1$, we can check that
\begin{align*}
\sum_{(J,m)\in\cJ\times \cM}e^{-L_{J,m}}&=\sum_{j=1}^{k}\binom{k}{j}e^{-L_{1,j}}\sum_{D\ge 1}K_{D}e^{-L_{2,D}}\\
&=\sum_{j=1}^{k}\frac{\binom{k}{j}}{\binom{k}{j}k}\sum_{D\ge 1}\frac{K_{D}}{K_{D}\vee 1}\frac{e^{-D}}{\sum_{D'\ge 1}(K_{D'}\wedge 1)e^{-D'}}=1.\\
\end{align*}
If there exists $c>0$ such that  $K_{D}\le \exp(cD)$ for every $D\ge 1$, it follows from the inequality $\binom{k}{j}\le k^{j}$ for $j\in\{1,\ldots,k\}$ that $L_{J,m}\le (|J|+1)\log k+(c+1)D_{m}$ for every $(J,m)\in\cJ\times \cM$. For such a choice, our previous bound $n^{-1}\gH^{2}\pa{\gnu_{\theta_{\rho\et,a\et,s\et}},\gnu_{\widehat \theta}}$ remains then of the same order up to a possible additional term of order $(|J\et|+1)\log k/n$.

\subsection{The case of a Poisson process with no covariates}
Let us now model our observations  $\etc{X}$ by making the assumption that they are simply $n$ i.i.d.\ Poisson processes and that their common intensity $\nu_{\theta\et}$ is of the form $\theta\et\cdot\mu$ where $\theta\et$ belongs to a given class $\Theta$ of nonnegative integrable functions on $(\cX,\cA,\mu)$. If $\etc{X}$ were genuine i.i.d.\ Poisson processes with the same intensity $\nu_{\theta\et}$, their sum $S=\sum_{i=1}^{n}X_{i}$ would also be a Poisson process but with intensity $n\nu_{\theta\et}=(n\theta\et)\cdot\mu$. In place of the $n$-tuple $(\etc{X})$ we may alternatively consider the sole observation $S$ and apply our procedure to $\bsX=S$ and the family of intensities  $\{(n\theta)\cdot \mu,\; \theta\in \Theta\}$. In fact, taking either $S$ or $(\etc{X})$ as our observed data has actually no impact on the resulting estimator since the test statistic $\gT_{2}$ given by \eref{eq-TPP} can equivalently be written as
\[
\gT_{2}(\bsX,\theta,\theta')=\gT_{2}(S,\theta,\theta')=\int_{\cX}\psi\pa{\sqrt{\frac{n\theta'}{n\theta}}}dS+\frac{1}{4}\pa{\int_{\cX}(n\theta) d\mu-\int_{\cX}(n\theta')d\mu}.
\]
Moreover, for two intensity measures $\nu,\nu'$
\[
H^{2}(n\nu,n\nu')=nH^{2}(\nu,\nu')=\gH^{2}(\gnu,\gnu')
\]
 with $\gnu=(\nu,\ldots,\nu)$ and $\gnu'=(\nu',\ldots,\nu')$. Applying our Theorem~\ref{thm-Poisson} either to the single observation $S$ or the $n$-tuple $(\etc{X})$ yields 
\begin{cor}\label{cor-Poisson}
Let $\xi>\ln 3$, $\theta\et\in \Theta$ and $(\lambda,\beta)\in (0,+\infty)\times (0,1)$ that satisfies the constraints given by~\eref{eq-constraints1}. An estimator $\widehat \theta$ drawn with the posterior distribution $\gpi_{\bsX}$ defined by \eref{def-piX} with $\gT=\gT_{2}$, possesses the following property. With a probability at least $1-3e^{-\xi}$
\begin{equation}\label{thmP-00b}
CH^{2}\pa{\nu_{\theta\et},\nu_{\widehat \theta}}\le H^{2}\pa{\nu\et,\nu_{\theta\et}}+\frac{r(\gpi,\theta\et)+t_{\xi}+\xi}{n}
\end{equation}
where $C=C(\lambda,\beta)>0$ is a numerical constant that only depends on the chosen values of $\lambda$ and $\beta$. In particular, it follows from the triangle inequality that with a probability at least $1-3e^{-\xi}$
\begin{equation}\label{thmP-01b}
C'H^{2}\pa{\nu\et,\nu_{\widehat \theta}}\le \inf_{\theta\et\in \Theta}\cro{H^{2}\pa{\nu\et,\nu_{\theta\et}}+\frac{r(\gpi,\theta\et)}{n}}+\frac{t_{\xi}+\xi}{n}
\end{equation}
for some $C'=C'(\lambda,\beta)>0$.
\end{cor}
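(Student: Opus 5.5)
We reduce Corollary~\ref{cor-Poisson} to Theorem~\ref{thm-Poisson} in its covariate-free form. There are no covariates here, so $\sW$ can be taken to be a single point. Every $\theta\in\Theta$ is then a nonnegative integrable function on $\cX$, and for every $i$ we have $\theta_{i}=\theta$ and $\nu_{\theta,i}=\nu_{\theta}=\theta\cdot\mu$. The genuine processes $X_{i}\et$ may have intensities $\nu_{i}\et$, and $N$ and $t_{\xi}$ are defined exactly as in the general framework.

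The key identity is additivity of $H^{2}$ across coordinates:
\[
\gH^{2}(\gnu_{\theta},\gnu_{\theta'})=\sum_{i=1}^{n}H^{2}(\nu_{\theta},\nu_{\theta'})=nH^{2}(\nu_{\theta},\nu_{\theta'}).
\]
For the approximation term we set $\gnu\et=(\nu\et,\ldots,\nu\et)$, which is the i.i.d.\ situation the corollary models. In general one could define $\nu\et=n^{-1}\sum_{i}\nu_{i}\et$ and use the joint convexity of $H^{2}$, which gives $nH^{2}(\nu\et,\nu_{\theta\et})\le\gH^{2}(\gnu\et,\gnu_{\theta\et})$. However, the corollary needs the inequality in the other direction, so we stay in the setting where all the $\nu_{i}\et$ equal $\nu\et$, and then $\gH^{2}(\gnu\et,\gnu_{\theta\et})=nH^{2}(\nu\et,\nu_{\theta\et})$.

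Under this identification the loss $\ell$ on $\Theta$ is $nH^{2}(\nu_{\theta},\nu_{\theta'})$. The prior $\gpi$, the test statistic $\gT_{2}$, and hence the posterior $\gpi_{\bsX}$ and the quantity $r(\gpi,\theta\et)$ are all exactly those of Theorem~\ref{thm-Poisson}, computed with this $\ell$. As noted before the corollary, $\gT_{2}(\bsX,\theta,\theta')=\gT_{2}(S,\theta,\theta')$, so the estimator does not depend on whether we see $S$ or the $n$-tuple. Applying \eref{thmP-00} with the same $\xi>\ln 3$ gives, with probability at least $1-3e^{-\xi}$,
\[
CnH^{2}(\nu_{\theta\et},\nu_{\widehat\theta})\le nH^{2}(\nu\et,\nu_{\theta\et})+r(\gpi,\theta\et)+t_{\xi}+\xi.
\]
Dividing by $n$ yields \eref{thmP-00b}.

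For \eref{thmP-01b} we use the triangle inequality for $H$, which is an $\L_{2}$ distance between square roots of densities, together with $(a+b)^{2}\le 2a^{2}+2b^{2}$:
\[
H^{2}(\nu\et,\nu_{\widehat\theta})\le 2H^{2}(\nu\et,\nu_{\theta\et})+2H^{2}(\nu_{\theta\et},\nu_{\widehat\theta}).
\]
Combined with \eref{thmP-00b}, this gives
\[
\frac{C}{2(1+C)}H^{2}(\nu\et,\nu_{\widehat\theta})\le H^{2}(\nu\et,\nu_{\theta\et})+\frac{r(\gpi,\theta\et)+t_{\xi}+\xi}{n}.
\]

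Passing to the infimum over $\theta\et$ needs some care, because the event of probability $1-3e^{-\xi}$ depends on $\theta\et$. If the infimum is attained, we apply the bound at the minimiser. Otherwise we fix $\epsilon>0$ and pick $\theta\et$ that is within $\epsilon$ of the infimum of the bracket; since the bound for that $\theta\et$ holds with probability at least $1-3e^{-\xi}$, and the statement involves only the deterministic right-hand side, we can absorb $\epsilon$ into the constant. This step, making the $\theta\et$-dependent event compatible with the infimum, is the only delicate point. Everything else is bookkeeping of the factor $n$ between $\gH^{2}$ and $H^{2}$.
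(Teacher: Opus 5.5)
Your argument is correct and is essentially the paper's. The paper observes that $\gT_{2}(\bsX,\theta,\theta')=\gT_{2}(S,\theta,\theta')$ and $H^{2}(n\nu,n\nu')=nH^{2}(\nu,\nu')=\gH^{2}(\gnu,\gnu')$, then applies Theorem~\ref{thm-Poisson} either to the $n$-tuple (your route, with all $\nu_{i}\et$ equal to $\nu\et$) or to the single observation $S$. The $S$ route is worth noting because it removes your restriction to equal intensities: $S\et=\sum_{i}X_{i}\et$ is a Poisson process with intensity $\sum_{i}\nu_{i}\et$ and $|S\ominus S\et|\le N$, so applying the theorem to $S$ gives \eref{thmP-00b} with $\nu\et=n^{-1}\sum_{i}\nu_{i}\et$ directly, without the reverse convexity inequality you were missing.
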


\section{Proof of Theorem~\ref{thmCasDet}}\label{sect-PThm1}
Let $\theta\et\in \Theta$, $B$ be an event with positive probability and $\sA$ a measurable subset of  $\Theta$. We use the notation $\gtheta$ for a triplet $(\theta_{1},\theta_{2},\theta_{3})$ in $\Theta^{3}$. We shall often use the following lemma which is proven in Audibert and Catoni~\citeyearpar{audibert2011linear} [Lemma 4.2, page 28].
\begin{lem}\label{lem-Catoni}
Let $(U,V)$ be a pair of random variables with values in a product space $(E\times F,\cE\otimes \cF)$ and marginal distributions $P_{U}$ and $P_{V}$ respectively. For all measurable function $h$ on $(E\times F,\cE\otimes \cF)$, 
\[
\E_{U}\cro{\frac{1}{\E_{V}\cro{\exp\cro{-h(U,V)}}}}\le \cro{\E_{V}\cro{\frac{1}{\E_{U}\cro{\exp\cro{h(U,V)}}}}}^{-1}.
\]
\end{lem}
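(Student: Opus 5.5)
The plan is a single application of Jensen's inequality under a reweighted measure on $E$, chosen so that integrating in $v$ collapses everything through the identity $g\cdot f=1$. Here $\E_{U}$ and $\E_{V}$ denote integration with respect to the marginals $P_{U}$ and $P_{V}$, so the joint law of $(U,V)$ plays no role and Fubini--Tonelli applies to nonnegative integrands on $P_{U}\otimes P_{V}$. Set
\[
f(u)=\E_{V}\cro{\exp\cro{-h(u,V)}}\in(0,+\infty]\quad\text{and}\quad g(u)=\frac{1}{f(u)}\in[0,+\infty).
\]
The left-hand side is then $\E_{U}[g(U)]$, and the claim is equivalent to
\[
\E_{V}\cro{\frac{1}{\E_{U}\cro{\exp\cro{h(U,V)}}}}\;\E_{U}\cro{g(U)}\le 1 .
\]

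\textbf{Degenerate cases.} If $\E_{U}[g]=0$, the left-hand side of the lemma is $0$ and there is nothing to prove. If $\E_{U}[g]=+\infty$, we must show that the $\E_{V}$-expectation on the right vanishes; this follows by running the argument below with $g$ replaced by truncations $g\wedge M$ and letting $M\to\infty$. So assume $0<\E_{U}[g]<\infty$.

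\textbf{Key step.} Introduce the probability measure $Q$ on $E$ with $dQ/dP_{U}=g/\E_{U}[g]$, and fix $v\in F$. For $u$ with $g(u)>0$,
\[
\E_{U}\cro{e^{h(U,v)}}\ge \E_{U}[g]\;\E_{Q}\cro{\frac{e^{h(U,v)}}{g(U)}} .
\]
Since $x\mapsto 1/x$ is convex on $(0,\infty)$, Jensen's inequality gives $1/\E_{Q}[Y]\le \E_{Q}[1/Y]$ with $Y=e^{h}/g$. Hence
\[
\frac{1}{\E_{U}\cro{e^{h(U,v)}}}\le \frac{\E_{Q}\cro{g(U)e^{-h(U,v)}}}{\E_{U}[g]}=\frac{\E_{U}\cro{g(U)^{2}e^{-h(U,v)}}}{\pa{\E_{U}[g]}^{2}} .
\]

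\textbf{Integration in $v$.} Integrate this bound over $v$ with respect to $P_{V}$ and use Tonelli. The numerator becomes
\[
\E_{U}\cro{g(U)^{2}f(U)}=\E_{U}[g(U)],
\]
because $g f=1$ wherever $g>0$, and points with $g=0$ contribute nothing. Therefore
\[
\E_{V}\cro{\frac{1}{\E_{U}\cro{e^{h(U,V)}}}}\le \frac{1}{\E_{U}[g]},
\]
which is the claim.

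\textbf{Main obstacle.} The only delicate part is the bookkeeping with infinite values and null sets: $f=+\infty$ (equivalently $g=0$), and the case where $\E_{U}[e^{h(U,v)}]$ is infinite. Each is handled by the conventions $1/\infty=0$ and $a/0=+\infty$, together with restricting $Q$ to $\{g>0\}$.
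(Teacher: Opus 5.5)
Your proof is correct. The paper gives no proof of this lemma and only cites Audibert and Catoni. Your argument is the standard change-of-measure proof of this inequality: you reweight $P_U$ by the density proportional to $g=1/\E_V[e^{-h(\cdot,V)}]$, apply Jensen to $x\mapsto 1/x$ (equivalently Cauchy--Schwarz), and integrate in $v$ by Tonelli so that $g^{2}f=g$ collapses the bound to $1/\E_U[g]$. Your treatment of the degenerate cases also holds up; the one unstated detail is that in the truncated case the identity $gf=1$ becomes the inequality $(g\wedge M)^{2}f\le g\wedge M$, which is exactly what is needed to get $\E_V[1/\E_U e^{h}]\le 1/\E_U[g\wedge M]\to 0$.
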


Given a positive number $z$ to be chosen later on, we consider the event 
\[
A=\ac{\int_{\Theta}\exp\cro{-\lambda(1-\beta) \gT(\bsX,\theta)}d\gpi(\theta)>z}\subset \Omega.
\]
By Markov's inequality, the probability of its complement satisfies  
\begin{align}
\P_{B}\pa{\co{A}}&=\P_{B}\cro{\pa{\int_{\Theta}\exp\cro{-\lambda(1-\beta) \gT(\bsX,\theta)}d\gpi(\theta)}^{-1}\ge z^{-1}}\nonumber\\
&\le z\E_{B}\cro{\pa{\int_{\Theta}\exp\cro{-\lambda(1-\beta) \gT(\bsX,\theta)}d\gpi(\theta)}^{-1}}.\label{eq-00}
\end{align}

We derive from the definition~\eref{def-piX} of $\gpi_{\bsX}$ that 
\begin{align*}
\E_{B}\cro{\gpi_{\bsX}\pa{\sA}}&=\E_{B}\cro{\gpi_{\bsX}\pa{\sA}\1_{\co{A}}(\bsX)}+\E_{B}\cro{\gpi_{\bsX}\pa{\sA}\1_{A}(\bsX)}\\
&\le \P_{B}(\co{A})+z^{-1}\E_{B}\pa{\int_{\sA}\exp\cro{-\lambda(1-\beta) \gT(\bsX,\theta)}d\gpi(\theta)}\\
&= \P_{B}(\co{A})+z^{-1}\int_{\sA}\E_{B}\pa{\exp\cro{-\lambda(1-\beta) \gT(\bsX,\theta)}}d\gpi(\theta).
\end{align*}
Using  \eref{eq-00}, we deduce that $\E_{B}\cro{\gpi_{\bsX}\pa{\sA}}\le z \cE_{1}+z^{-1}\cE_{2}(\sA)$
with 
\begin{align}
\cE_{1}&=\E_{B}\cro{\pa{\int_{\Theta}\exp\cro{-\lambda(1-\beta) \gT(\bsX,\theta)}d\gpi(\theta)}^{-1}}\label{def-cE1}\\
\cE_{2}(\sA)&=\E_{B}\cro{\int_{\sA} \exp\pa{-\lambda(1-\beta) \gT(\bsX,\theta)}d\gpi(\theta)}.\label{def-cE2}
\end{align}
If $\cE_{2}(\sA)>0$ we pick $z=\sqrt{\cE_{2}(\sA)/\cE_{1}}$ and we get
\begin{equation}\label{eq-020}
\E_{B}\cro{\gpi_{\bsX}\pa{\sA}}\le 2\sqrt{\cE_{1}\cE_{2}(\sA)}.
\end{equation}
If $\cE_{2}(\sA)=0$, the conclusion follows by letting $z \downarrow 0$.

We bound $\cE_{1}$ and $\cE_{2}(\sA)$ by means of the following proposition.
\begin{prop}\label{prop-f}
For every $\lambda>0$ and $\beta\in (0,1)$,  
\begin{equation}\label{eq-prop-f1}
\cE_{1}\le \cro{\int_{\Theta}\cro{\int_{\Theta}\pa{\int_{\Theta}\frac{d\gpi(\theta_{1})}{\sL_{\overline \beta}(\gtheta,\lambda|B)}}^{-1}d\gpi(\theta_{2})}^{-1}d\gpi(\theta_{3})}^{-1}
\end{equation}
and for every measurable subset $\sA$ of $\Theta$,  
\begin{align}
\cE_{2}(\sA)\le \int_{\sA}\cro{\int_{\Theta}\pa{\int_{\Theta}\frac{d\gpi(\theta_{1})}{\sL_{\beta}(\gtheta,\lambda| B)}}^{-1}d\gpi(\theta_{2})}d\gpi(\theta_{3}).\label{eq-prop-f4}
\end{align}
\end{prop}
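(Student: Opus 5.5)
The plan is to bound $\gT(\bsX,\theta)$ from above and from below using the Gibbs variational principle. The resulting deterministic inequalities express $\exp[-\lambda(1-\beta)\gT(\bsX,\theta_3)]$ through ratios of integrals of $\exp(\pm\lambda\Delta_\kappa)$. Lemma~\ref{lem-Catoni} then lets us move $\E_B$ inside those integrals, where it produces the Laplace transforms $\sL_\kappa$. Throughout, $\gpi$ is a probability measure and Assumption~\ref{Ass-posterior-well-defined} makes all integrals finite.

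\emph{Variational bounds.} Fix $\bsX$ and $\theta_3$, and let $\rho=\widetilde\gpi_{\bsX,\theta_3}$ be the probability with density $e^{\lambda\gT(\bsX,\theta_3,\cdot)}/Z$ with respect to $\gpi$, where $Z=\int e^{\lambda\gT(\bsX,\theta_3,\theta_1)}d\gpi(\theta_1)$. Then $\gT(\bsX,\theta_3)=\int\gT(\bsX,\theta_3,\theta_2)\,d\rho(\theta_2)$. By direct computation,
\[
\lambda\gT(\bsX,\theta_3)=\log Z+K(\rho,\gpi),
\]
where $K$ denotes the Kullback--Leibler divergence. The Donsker--Varadhan inequality gives, for every $s\in\R$,
\[
s\int\gT(\bsX,\theta_3,\theta_2)\,d\rho(\theta_2)\le \log\int e^{s\gT(\bsX,\theta_3,\theta_2)}d\gpi(\theta_2)+K(\rho,\gpi).
\]
For $\cE_2$, write $-\lambda(1-\beta)\gT=-\lambda\gT+\lambda\beta\gT$. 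Apply the identity to the first term and the inequality with $s=\lambda\beta$ to the second; the $K$ terms cancel and we get
\[
e^{-\lambda(1-\beta)\gT(\bsX,\theta_3)}\le \frac{\int e^{\lambda\beta\gT(\bsX,\theta_3,\theta_2)}d\gpi(\theta_2)}{\int e^{\lambda\gT(\bsX,\theta_3,\theta_1)}d\gpi(\theta_1)}=\int\frac{d\gpi(\theta_2)}{\int e^{-\lambda\Delta_\beta(\bsX,\gtheta)}d\gpi(\theta_1)}.
\]
For $\cE_1$, use $1-\beta=\overline\beta-1$ to write $-\lambda(1-\beta)\gT=\lambda\gT-\lambda\overline\beta\gT$. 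Apply the identity to $\lambda\gT$ and the inequality with $s=\lambda\overline\beta$ to $-\lambda\overline\beta\gT$; the $K$ terms cancel again and give the lower bound
\[
e^{-\lambda(1-\beta)\gT(\bsX,\theta_3)}\ge\Big[\int\frac{d\gpi(\theta_2)}{\int e^{-\lambda\Delta_{\overline\beta}(\bsX,\gtheta)}d\gpi(\theta_1)}\Big]^{-1}.
\]

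\emph{Bounding $\cE_2(\sA)$.} Integrate the upper bound over $\theta_3\in\sA$, take $\E_B$ and use Fubini. It then suffices to bound, for fixed $(\theta_2,\theta_3)$, the quantity $\E_B\big[(\int e^{-\lambda\Delta_\beta}d\gpi(\theta_1))^{-1}\big]$. Apply Lemma~\ref{lem-Catoni} with $U=\bsX$ under $\P_B$, $V=\theta_1\sim\gpi$ independent of $U$, and $h=\lambda\Delta_\beta$. The right-hand side of the lemma is $\big(\int d\gpi(\theta_1)/\sL_\beta(\gtheta,\lambda|B)\big)^{-1}$, which is exactly \eref{eq-prop-f4}.

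\emph{Bounding $\cE_1$.} By the lower bound, $\cE_1\le\E_B\big[\big(\int e^{-h(\bsX,\theta_3)}d\gpi(\theta_3)\big)^{-1}\big]$ with
\[
h(\bsX,\theta_3)=\log\int d\gpi(\theta_2)\Big(\int e^{-\lambda\Delta_{\overline\beta}}d\gpi(\theta_1)\Big)^{-1}.
\]
A first application of Lemma~\ref{lem-Catoni} with $U=\bsX$ and $V=\theta_3$ bounds this by $\big[\int d\gpi(\theta_3)\,/\,\E_B e^{h(\bsX,\theta_3)}\big]^{-1}$. By Fubini, $\E_Be^{h}=\int d\gpi(\theta_2)\,\E_B\big[(\int e^{-\lambda\Delta_{\overline\beta}}d\gpi(\theta_1))^{-1}\big]$. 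A second application, with $V=\theta_1$, bounds the inner expectation by $\big(\int d\gpi(\theta_1)/\sL_{\overline\beta}\big)^{-1}$. Since the map $x\mapsto[\int d\gpi/x]^{-1}$ is monotone nondecreasing, these substitutions combine into \eref{eq-prop-f1}.

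The main obstacle is getting the signs right in the variational step. The split $1-\beta=\overline\beta-1$ is what makes the divergence terms cancel in the lower bound, and this is why $\overline\beta=2-\beta$ appears in $\cE_1$. The remaining steps require only measurability and Fubini checks, which Assumption~\ref{Ass-posterior-well-defined} guarantees.
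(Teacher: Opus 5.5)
Your proof is correct and follows essentially the paper's route. Your pathwise bounds come from the Donsker--Varadhan inequality together with the identity $\lambda\gT(\bsX,\theta_3)=\log Z+K(\rho,\gpi)$; they coincide with what the paper gets in one stroke from Jensen's inequality $\exp(\lambda s\gT(\bsX,\theta))\le\int e^{\lambda(s+1)\gT(\bsX,\theta,\theta')}\,d\gpi(\theta')/Z$ with $s=\pm(1-\beta)$. The subsequent applications of Lemma~\ref{lem-Catoni} (over $\theta_3$ and over $\theta_1$, combined by monotonicity) are the same, except that for $\cE_1$ the paper applies the lemma before inserting the Jensen bound rather than after.
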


\begin{proof}
It follows from \eref{def-TX} and the convexity of the exponential that for every $s\in \R$, 
\begin{align*}
\E_{B}\cro{\exp\pa{\lambda s\gT(\bsX,\theta)}}&=\E_{B}\cro{\exp\pa{\int_{\Theta}\lambda s\gT(\bsX,\theta,\theta')\frac{\exp\cro{\lambda\gT(\bsX,\theta,\theta')}d\gpi(\theta')}{\int_{\Theta}\exp\cro{ \lambda \gT(\bsX,\theta,\theta'')}d\gpi(\theta'')}}}\\
&\le \int_{\Theta}\E_{B}\cro{\frac{\exp\cro{\lambda(s+1)\gT(\bsX,\theta,\theta')}}{\int_{\Theta}\exp\cro{ \lambda \gT(\bsX,\theta,\theta'')}d\gpi(\theta'')}}d\gpi(\theta').
\end{align*}
This yields
\begin{align*}
&\E_{B}\cro{\exp\pa{\lambda s\gT(\bsX,\theta)}}\\
&\le \int_{\Theta}\E_{B}\cro{\pa{\int_{\Theta}\exp\cro{\lambda\pa{\gT(\bsX,\theta,\theta'')-(s+1)\gT(\bsX,\theta,\theta')}}d\gpi(\theta'')}^{-1}}d\gpi(\theta')\\
&=\int_{\Theta}\E_{B}\cro{\pa{\int_{\Theta}\exp\cro{-\lambda\Delta_{s+1}(\bsX,\gtheta)}d\gpi(\theta_{1})}^{-1}}d\gpi(\theta_{2}).
\end{align*}
By applying Lemma~\ref{lem-Catoni}, we deduce that
\begin{align}
\E_{B}\cro{\exp\pa{\lambda s\gT(\bsX,\theta)}}&\le \int_{\Theta}\cro{\int_{\Theta}{\frac{d\gpi(\theta_{1})}{\E_{B}\cro{\exp\cro{\lambda\Delta_{s+1}(\bsX,\gtheta)}}}}}^{-1} d\gpi(\theta_{2})\nonumber\\
&=\int_{\Theta}\cro{\int_{\Theta}\frac{d\gpi(\theta_{1})}{\sL_{s+1}(\gtheta,\lambda|B)}}^{-1}d\gpi(\theta_{2}) \label{eq-prop-f1b}.
\end{align}

Let us now bound $\cE_{1}$. By applying  Lemma~\ref{lem-Catoni} again and \eref{eq-prop-f1b} with $s=1-\beta$, we obtain that
\begin{align*}
\cE_{1}&=\E_{B}\cro{\pa{\int_{\Theta}\exp\cro{-\lambda s \gT(\bsX,\theta)}d\gpi(\theta)}^{-1}}\le \cro{\int_{\Theta}\frac{d\gpi(\theta)}{\E_{B}\cro{\exp\pa{\lambda s \gT(\bsX,\theta)}}}}^{-1}\\
&\le \cro{\int_{\Theta}\cro{\int_{\Theta}\pa{\int_{\Theta}\frac{d\gpi(\theta_{1})}{\sL_{2-\beta}(\gtheta,\lambda|B)}}^{-1}d\gpi(\theta_{2})}^{-1}d\gpi(\theta_{3})}^{-1}
\end{align*}
which yields \eref{eq-prop-f1}.

Let us now turn to $\cE_{2}(\sA)$. Let us first observe that 
\begin{align*}
\cE_{2}(\sA)&=\E_{B}\cro{\int_{\sA}\exp\pa{-\lambda(1-\beta) \gT(\bsX,\theta)}d\gpi(\theta)}\\
&=\int_{\sA}\E_{B}\cro{\exp\pa{-\lambda(1-\beta) \gT(\bsX,\theta)}}d\gpi(\theta)
\end{align*}
and the result follows by applying \eref{eq-prop-f1b} with $s=-(1-\beta)$.
\end{proof}

\subsection{End of the proof of Theorem~\ref{thmCasDet}}
Throughout this section, we drop the dependency with respect to $\lambda$ of  the coefficients $A_{0}(\lambda,\kappa,\gP\et,B,\theta\et), \frc_{i}(\lambda,\kappa)$ for $i\in\{1,2,3\}$. We also drop the dependency of $A_{0}$ with respect to $\gP\et$ and $\theta\et$ so that  we write for short $A_{0}(\kappa), \frc_{i}(\kappa)$ for $i\in\{1,2,3\}$. We control the quantities $\cE_{1}$ and $\cE_{2}(\sA)$ defined by~\eref{def-cE1} and~\eref{def-cE2} respectively by means of the following proposition.

\begin{prop}\label{prop-det}
Let $\lambda>0$, $\beta\in (0,1)$, $\overline \beta=2-\beta$ and $B$ an event with positive probability. Assume that Assumption~\ref{Ass-CasDet} is satisfied at $\theta\et$ and set for $\kappa\in \{\beta,\overline \beta\}$, $I_{0,\kappa}=\exp\pa{A_{0}(\kappa)}$ and for a given measurable subset $\sC$ of $\Theta$ 
\begin{align}\label{def-Ii}
I_{i,\kappa}(\sC)&=\int_{\sC}\exp\cro{-|\frc_{i}(\kappa)|\ell\pa{\theta\et,\theta}}d\gpi(\theta)\quad \text{for $i\in\{1,2,3\}$.}
\end{align}
Then, for every measurable subset $\sA$ of $\Theta$,
\begin{align}\label{eq-cE}
\cE_{1}&\le I_{0,\overline \beta}I_{1,\overline \beta}^{-1}(\Theta)I_{2,\overline \beta}(\Theta)I_{3,\overline \beta}^{-1}(\Theta)\quad \text{and}\quad \cE_{2}(\sA)\le I_{0,\beta}I_{1,\beta}^{-1}(\Theta)I_{2,\beta}(\Theta)I_{3,\beta}(\sA).
\end{align}

\end{prop}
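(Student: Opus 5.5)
The plan is to substitute the Laplace bound \eref{BLaplace0} of Assumption~\ref{Ass-CasDet} directly into the two inequalities of Proposition~\ref{prop-f}. The integrand then factorises into a product of functions of $\theta_{1}$, $\theta_{2}$ and $\theta_{3}$ separately, so each nested integral collapses to a product of the quantities $I_{i,\kappa}$. The only points needing care are the signs of the coefficients $\frc_{i}(\kappa)$, which are fixed by \eref{cond-ei}, and the direction of each inequality when we take reciprocals.

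\emph{Bound on $\cE_{2}(\sA)$.} Take $\kappa=\beta$. By \eref{BLaplace0},
\[
\frac{1}{\sL_{\beta}(\gtheta,\lambda|B)}\ge I_{0,\beta}^{-1}\exp\cro{-\frc_{1}(\beta)\ell(\theta\et,\theta_{1})+\frc_{2}(\beta)\ell(\theta\et,\theta_{2})-\frc_{3}(\beta)\ell(\theta\et,\theta_{3})}.
\]
Since $\frc_{1}(\beta)\ge 0$, integrating in $\theta_{1}$ gives
\[
\int_{\Theta}\frac{d\gpi(\theta_{1})}{\sL_{\beta}}\ge I_{0,\beta}^{-1}I_{1,\beta}(\Theta)\,e^{\frc_{2}(\beta)\ell(\theta\et,\theta_{2})-\frc_{3}(\beta)\ell(\theta\et,\theta_{3})}.
\]
We invert this and integrate in $\theta_{2}$. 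Because $\frc_{2}(\beta)>0$, the factor $e^{-\frc_{2}(\beta)\ell(\theta\et,\theta_{2})}$ integrates to $I_{2,\beta}(\Theta)$. What remains is $e^{\frc_{3}(\beta)\ell(\theta\et,\theta_{3})}=e^{-|\frc_{3}(\beta)|\ell(\theta\et,\theta_{3})}$, using $\frc_{3}(\beta)<0$. Integrating over $\theta_{3}\in\sA$ then gives $I_{3,\beta}(\sA)$, which proves the second bound in \eref{eq-cE} via \eref{eq-prop-f4}.

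\emph{Bound on $\cE_{1}$.} Take $\kappa=\overline\beta$ and argue in the same way, starting from \eref{eq-prop-f1}. The innermost integral is bounded below by
\[
I_{0,\overline\beta}^{-1}I_{1,\overline\beta}(\Theta)\,e^{\frc_{2}(\overline\beta)\ell(\theta\et,\theta_{2})-\frc_{3}(\overline\beta)\ell(\theta\et,\theta_{3})},
\]
where $\frc_{1}(\overline\beta)\ge 0$ makes the exponent equal to $-|\frc_{1}|\ell$. Inverting and integrating in $\theta_{2}$ bounds the middle integral above by
\[
I_{0,\overline\beta}I_{1,\overline\beta}^{-1}(\Theta)I_{2,\overline\beta}(\Theta)\,e^{\frc_{3}(\overline\beta)\ell(\theta\et,\theta_{3})},
\]
with $\frc_{2}(\overline\beta)>0$ used for the $\theta_{2}$-integral. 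Inverting again gives a lower bound whose $\theta_{3}$-dependence is $e^{-\frc_{3}(\overline\beta)\ell(\theta\et,\theta_{3})}=e^{-|\frc_{3}(\overline\beta)|\ell(\theta\et,\theta_{3})}$, since $\frc_{3}(\overline\beta)\ge 0$. Its integral over $\Theta$ is therefore at least
\[
I_{0,\overline\beta}^{-1}I_{1,\overline\beta}(\Theta)I_{2,\overline\beta}^{-1}(\Theta)I_{3,\overline\beta}(\Theta).
\]
A final inversion yields the first bound in \eref{eq-cE}.

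The main obstacle is bookkeeping rather than any real difficulty. Every reciprocal reverses the inequality, so one must check that each monotonicity step goes the right way. One must also check that all quantities are positive and finite. Positivity holds because the integrands are positive and $\gpi$ is a nonzero measure, and finiteness holds because the exponents are nonpositive; if $\gpi$ is not a probability measure, the $I_{i,\kappa}(\Theta)$ must still be finite, which follows from $\gpi(\Theta)<\infty$. With the convention $a/0=+\infty$, the degenerate case $I_{3,\beta}(\sA)=0$ (for instance $\gpi(\sA)=0$) is trivial.
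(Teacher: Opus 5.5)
Your proof is correct and follows the paper's own argument. You substitute the bound \eref{BLaplace0} into the nested integrals of Proposition~\ref{prop-f}, factorise in $\theta_{1},\theta_{2},\theta_{3}$, and use the sign conditions \eref{cond-ei} (notably $\frc_{3}(\beta)<0$ and $\frc_{3}(\overline\beta)\ge 0$) to identify each factor with an $I_{i,\kappa}$; you simply spell out the reciprocal-by-reciprocal bookkeeping that the paper compresses into a single appeal to Fubini's theorem.
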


\begin{proof}
Under Assumption~\ref{Ass-CasDet}, 
\begin{align*}
\sL_{\kappa}(\gtheta,\lambda|B)\le  \exp\pa{A_{0}(\kappa)}\prod_{i=1}^{3}\exp\cro{(-1)^{i-1}\frc_{i}(\kappa)\ell\pa{\theta\et,\theta_{i}}}.
\end{align*}
Applying inequality~\eref{eq-prop-f1} of Proposition~\ref{prop-f} with Fubini's theorem, we get 
\begin{align*}
\cE_{1}\le \cro{\int_{\Theta}\cro{\int_{\Theta}\pa{\int_{\Theta}\frac{d\gpi(\theta_{1})}{\sL_{\overline \beta}(\gtheta,\lambda|B)}}^{-1}d\gpi(\theta_{2})}^{-1}d\gpi(\theta_{3})}^{-1}
&\le I_{0,\overline \beta}I_{1,\overline\beta}^{-1}(\Theta)I_{2,\overline\beta}(\Theta)I_{3,\overline\beta}^{-1}(\Theta).
\end{align*}
By arguing similarly, we derive from \eref{eq-prop-f4} that 
\begin{align*}
\cE_{2}(\sA)
\le \int_{\sA}\cro{\int_{\Theta}\pa{\int_{\Theta}\frac{d\gpi(\theta_{1})}{\sL_{\beta}(\gtheta,\lambda|B)}}^{-1}d\gpi(\theta_{2})}d\gpi(\theta_{3})
\le I_{0,\beta}I_{1,\beta}^{-1}(\Theta)I_{2,\beta}(\Theta)I_{3,\beta}(\sA),
\end{align*}
where the last inequality holds since $\frc_{3}(\beta)<0$. 
\end{proof}

The following lemma allows us to control the integrals defined by\eref{def-Ii}.
\begin{lem}\label{lem-som}
Let $J\in\N$, $\eta>0$ and $\theta\in \Theta$. For every $r\ge \max\{\ray(\gpi,\theta),\gamma^{-1}\}$ 
\begin{align}
\int_{\co{\sB}(\theta,2^{J}r)}\exp\pa{-(2+\eta)\gamma\ell(\theta,\theta')}d\gpi(\theta')\le  \exp\cro{\Xi(\eta)}\gpi\pa{\sB(\theta,r)}\exp\pa{-2^{J}\eta\gamma r}\label{eq-lem-som01}
\end{align}
with $\Xi(\eta)=  -\log\cro{1-\exp\pa{-\eta}}$. Besides, 
\begin{align}
\int_{\Theta}\exp\pa{-(2+\eta)\gamma\ell(\theta,\theta')}d\gpi(\theta')\le \exp\cro{\Xi(\eta)}\gpi\pa{\sB(\theta,r)}.\label{eq-lem-som02}
\end{align}
\end{lem}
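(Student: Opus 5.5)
Both inequalities will follow from a shell decomposition of $\Theta$ into dyadic $\ell$-annuli around $\theta$, combined with the doubling property \eref{prop-epsn}. That property holds at every radius $\ge \ray(\gpi,\theta)$, so it applies to all radii $2^{j}r$ with $j\ge 0$.

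\emph{Step 1 (mass of balls).} Iterating \eref{prop-epsn} $j$ times gives, for every $j\ge 0$,
\[
\gpi\pa{\sB(\theta,2^{j}r)}\le \exp\pa{\gamma r(1+2+\dots+2^{j-1})}\gpi\pa{\sB(\theta,r)}\le \exp\pa{2^{j}\gamma r}\gpi\pa{\sB(\theta,r)}.
\]

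\emph{Step 2 (shell decomposition for \eref{eq-lem-som01}).} Write $\co{\sB}(\theta,2^{J}r)=\bigcup_{j\ge J}\{2^{j}r<\ell(\theta,\theta')\le 2^{j+1}r\}$. On the $j$-th shell the integrand is at most $\exp\pa{-(2+\eta)\gamma 2^{j}r}$, and the $\gpi$-mass of the shell is at most $\gpi(\sB(\theta,2^{j+1}r))\le \exp(2^{j+1}\gamma r)\gpi(\sB(\theta,r))$ by Step 1. The factor $2^{j+1}\gamma r=2\cdot 2^{j}\gamma r$ cancels the "$2$" in $(2+\eta)$, which is exactly why that constant appears. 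The $j$-th term is therefore bounded by $\gpi(\sB(\theta,r))\exp(-\eta\gamma 2^{j}r)$. Summing over $j\ge J$ gives
\[
\gpi\pa{\sB(\theta,r)}\sum_{j\ge J}\exp\pa{-\eta\gamma 2^{j}r}.
\]

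\emph{Step 3 (controlling the sum).} This is the only delicate point. Set $x=\gamma r\ge 1$, using $r\ge\gamma^{-1}$. Then $2^{j}x\ge 2^{J}x+(j-J)$, since $2^{j}-2^{J}\ge j-J$ and $x\ge1$. Hence
\[
\sum_{j\ge J}e^{-\eta 2^{j}x}\le e^{-\eta 2^{J}x}\sum_{k\ge0}e^{-\eta k}=e^{\Xi(\eta)}e^{-2^{J}\eta\gamma r},
\]
which is \eref{eq-lem-som01}. The condition $r\ge \gamma^{-1}$ is used only here, to make the geometric comparison $2^{j}x\ge 2^{J}x+(j-J)$ valid.

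\emph{Step 4 (global bound \eref{eq-lem-som02}).} Split $\Theta$ as $\sB(\theta,r)\cup\co{\sB}(\theta,r)$. On $\sB(\theta,r)$ the integrand is at most $1$, contributing at most $\gpi(\sB(\theta,r))$. The complement is handled by Step 3 with $J=0$, giving at most $\gpi(\sB(\theta,r))\sum_{k\ge1}e^{-\eta k}$, since here $2^{j}x\ge j+1$ for $j\ge0$. Adding the two pieces yields
\[
\gpi\pa{\sB(\theta,r)}\sum_{k\ge 0}e^{-\eta k}=e^{\Xi(\eta)}\gpi\pa{\sB(\theta,r)},
\]
as claimed.
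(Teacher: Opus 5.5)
Your proof is correct and follows essentially the same route as the paper: the same dyadic shell decomposition of $\co{\sB}(\theta,2^{J}r)$, the iterated doubling bound at radii $2^{j}r\ge \ray(\gpi,\theta)$, and the same split $\Theta=\sB(\theta,r)\cup\co{\sB}(\theta,r)$ for \eref{eq-lem-som02}. The differences are cosmetic. You bound the tail sum via $2^{j}x\ge 2^{J}x+(j-J)$, which gives ratio $e^{-\eta}$ directly, whereas the paper uses $2^{J+j}\ge 2^{J}(j+1)$ and then invokes $\gamma r\ge 1$. You also drop the factor $e^{-\gamma r}$ earlier than the paper does.
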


\begin{proof}
Let us set $\sB=\sB(\theta,r)$. Since  $r\ge \ray(\gpi,\theta)$, we deduce from~\eref{prop-epsn} that by induction, for every $j\ge 0$ 
\begin{align*}
\gpi\pa{\sB(\theta,2^{j+1}r)}&\le \exp\pa{\gamma\ray\sum_{k=0}^{j}2^{k}}\gpi\pa{\sB}= \exp\pa{(2^{j+1}-1)\gamma\ray}\gpi\pa{\sB}.
\end{align*}
Consequently, 
\begin{align*}
&\int_{\co{\sB}(\theta,2^{J}r)}\exp\cro{-(2+\eta)\gamma\ell(\theta,\theta')}d\gpi(\theta')\\
&=\sum_{j\ge J}\int_{\sB(\theta,2^{j+1}r)\setminus \sbB(\theta,2^{j}r)}\exp\cro{-(2+\eta)\gamma\ell(\theta,\theta')}d\gpi(\theta')\\
&\le \gpi\pa{\sB}\sum_{j\ge J}\frac{\gpi\pa{\sB(\theta,2^{j+1}r)}}{\gpi\pa{\sB}}\exp\cro{-(2+\eta)\gamma 2^{j}r}\\
&\le  \gpi\pa{\sB}\sum_{j\ge J}\exp\cro{ (2^{j+1}-1)\gamma r-(2+\eta)\gamma2^{j}r}\\
&=  \gpi\pa{\sB}\exp\pa{-\gamma r} \sum_{j\ge J}\exp\pa{-\eta\gamma 2^{j}r}.
\end{align*}
This yields 
\[
\int_{\co{\sB}(\theta,2^{J}r)}\exp\cro{-(2+\eta)\gamma\ell(\theta,\theta')}d\gpi(\theta')\le \gpi\pa{\sB}\sum_{j\ge 0}\exp\pa{-\eta\gamma 2^{J+j}r}.
\]
Using the inequality $2^{j}\ge j+1$ for all $j\ge 0$ and the fact that $\gamma r\ge  1$, we obtain that 
\begin{align*}
\sum_{j\ge 0}\exp\pa{-\eta\gamma 2^{J+j}r}&\le \exp\pa{-\eta\gamma 2^{J}r}\sum_{j\ge 0}\exp\pa{-\eta\gamma 2^{J}jr}\\
&\le  \frac{\exp\pa{-\eta\gamma 2^{J}r}}{1-\exp\pa{-\eta\gamma 2^{J}r}}\le \exp(\Xi(\eta))\exp\pa{-\eta\gamma 2^{J}r},
\end{align*}
which proved the first part of the result. 

In order to prove the second part, it suffices to apply this inequality to $J=0$ and to note that 
\begin{align*}
&\int_{\Theta}\exp\cro{-(2+\eta)\gamma \ell(\theta,\theta')}d\gpi(\theta')\\
&=\int_{\sB}\exp\cro{-(2+\eta)\gamma \ell(\theta,\theta')}d\gpi(\theta')+\int_{\co{\sB}}\exp\cro{-(2+\eta)\gamma \ell(\theta,\theta')}d\gpi(\theta')\\
&\le \gpi(\sB)\pa{1+\frac{e^{-\eta\gamma r}}{1-e^{-\eta \gamma r}}}\le \gpi(\sB)\pa{1+\frac{e^{-\eta}}{1-e^{-\eta}}}=\exp\pa{\Xi(\eta)}\gpi(\sB),
\end{align*}
which yields~\eref{eq-lem-som02}.
\end{proof}

Let us now complete the proof of Theorem~\ref{thmCasDet}.

Let $\eta>0, \gamma>0, J\in \N$ and $r\ge \max\{\ray(\gpi,\theta\et);\gamma^{-1}\}$  to be chosen later on, $\sB=\sB(\theta\et,r)$ and $\sA=\co{\sB}(\theta\et,2^{J}r)$. It follows from \eref{cond-ei} and \eref{def-Ii} that for $i\in\{1,3\}$,
\begin{align}
I_{i,\overline \beta}(\Theta)\ge \int_{\sB}\exp\cro{-\frc_{i}(\overline \beta)\ell\pa{\theta\et,\theta}}d\gpi(\theta)\ge \exp\cro{-\frc_{i}(\overline \beta)r}\gpi(\sB)\label{eq-I13}
\end{align}
and for $i=2$, provided that $\frc_{2}(\overline \beta)\ge (2+\eta)\gamma$, we deduce from Lemma~\ref{lem-som} that 
\begin{align}
I_{2,\overline \beta}(\Theta)&=\int_{\Theta}\exp\cro{-\frc_{2}(\overline \beta)\ell\pa{\theta\et,\theta}}d\gpi(\theta)\le \exp\cro{\Xi(\eta)}\gpi\pa{\sB}.\label{eq-I2}
\end{align}
By arguing in a similar way we obtain that 
\[
I_{1,\beta}(\Theta)\ge \exp\cro{-\frc_{1}(\beta)r}\gpi(\sB)\quad \text{and}\quad I_{2,\beta}(\Theta)\le \exp\cro{\Xi(\eta)}\gpi(\sB),
\]
for $\frc_{2}(\beta)\ge (2+\eta)\gamma$ while for $|\frc_{3}(\beta)|\ge (2+\eta)\gamma$,   
\begin{align*}
I_{3,\beta}(\sA)=\int_{\sA}\exp\cro{-|\frc_{3}(\beta)|\ell(\theta\et,\theta_{3})}d\gpi(\theta_{3})\le \exp\cro{\Xi(\eta)}\gpi\pa{\sB}\exp\pa{-2^{J}\eta\gamma r}.
\end{align*}
It follows thus from Proposition~\ref{prop-det} that for $\min\{\frc_{2}(\beta),\frc_{2}(\overline \beta),|\frc_{3}(\beta)|\}\ge (2+\eta)\gamma$ and $r\ge \max\{\ray(\gpi,\theta\et);\gamma^{-1}\}$ 
\begin{align*}
\cE_{1}&\le I_{0,\overline \beta}\exp\cro{\frc_{1}(\overline \beta)r}\pa{\gpi(\sB)}^{-1}\exp\cro{\Xi(\eta)}\gpi\pa{\sB}\exp\cro{\frc_{3}(\overline \beta)r}\pa{\gpi(\sB)}^{-1}\\
&=\pa{\gpi(\sB)}^{-1}\exp\cro{A_{0}(\overline \beta)+\Xi(\eta)+\pa{\frc_{1}(\overline \beta)+\frc_{3}(\overline \beta)}r}
\end{align*}
and 
\begin{align*}
\cE_{2}(\sA)&\le I_{0,\beta}\exp\cro{\frc_{1}(\beta)r}\pa{\gpi(\sB)}^{-1}\exp\cro{\Xi(\eta)}\gpi\pa{\sB}\exp\cro{\Xi(\eta)}\gpi\pa{\sB}\exp\pa{-2^{J}\eta\gamma r}\\
&=\pa{\gpi(\sB)}\exp\cro{A_{0}(\beta)+2\Xi(\eta)+\frc_{1}(\beta)r-2^{J}\eta\gamma r}.
\end{align*}
For $r\ge \max\{\ray(\gpi,\theta\et);\gamma^{-1}\}$ and $\gamma$ satisfying $(2+\eta)\gamma\le \min\{\frc_{2}(\beta),\frc_{2}(\overline \beta),|\frc_{3}(\beta)|\}$, \eref{eq-020} yields 
\begin{align}
\E_{B}\cro{\gpi_{\bsX}\pa{\co{\sB(\theta\et,2^{J}r)}}}\le 2\exp\pa{\overline A}\label{eq-030}
\end{align}
with 
\[
\overline A=\frac{A_{0}(\overline \beta)+A_{0}(\beta)}{2}+\frac{3\Xi(\eta)}{2}+\frac{\frc_{1}(\overline \beta)+\frc_{1}(\beta)+\frc_{3}(\overline \beta)}{2}r -\frac{2^{J}\eta\gamma r}{2}.
\]
Let us choose the parameters $\eta,\gamma, J$ and $r$ as follows  
\begin{align*}
\eta&=1,\; \quad \text{hence}\quad \frac{3\Xi(1)}{2}= \frac{ -3\log\cro{1-\exp\pa{-1}}}{2}<1;\\
\gamma&=\frac{1}{3}\min\{\frc_{2}(\beta),\frc_{2}(\overline \beta),|\frc_{3}(\beta)|\}\quad \text{hence}\quad (2+\eta)\gamma= \min\{\frc_{2}(\beta),\frc_{2}(\overline \beta),|\frc_{3}(\beta)|\};\\
J&= 2+\PES{\log_{2}\pa{1+\frac{\frc_{1}(\overline \beta)+\frc_{1}(\beta)+\frc_{3}(\overline \beta)}{2\gamma}}}\ge 2;\\
r&=\ray(\gpi,\theta\et)+\frac{A_{0}(\overline \beta)+A_{0}(\beta)+2(1+\xi)}{2\gamma}\ge \max\{\ray(\gpi,\theta\et);\gamma^{-1}\}.
\end{align*}
Setting, $a=\pa{\frc_{1}(\overline \beta)+\frc_{1}(\beta)+\frc_{3}(\overline \beta)}/(2\gamma)$ and $A_{0}=(A_{0}(\overline \beta)+A_{0}(\beta))/2$ we obtain that parameter $J$ satisfies $2^{J-2}\ge 1+a$ and consequently, 
\begin{align*}
\overline A&\le  -\xi +\gamma\pa{\frac{A_{0}+1+\xi}{\gamma}+ar -2^{J-2} r}\le -\xi +\gamma\cro{\frac{A_{0}+1+\xi}{\gamma}-r},
\end{align*}
is not larger than $-\xi$, which, together with  \eref{eq-030}, proves \eref{eq-thmCasDet}. 

\section{Other proofs}\label{sect-proof}

\begin{lem}\label{lem-r-well-defined}
Fix $\theta\in \Theta$. The set 
$$I=\big\{r_0\ge 0: \forall r\ge r_0, \, 0<\gpi\pa{\sB(\theta,2r)}\le \exp\pa{\gamma r}\gpi(\sB(\theta,r))\big\}$$ is an interval of the form 
$[\inf I, \infty)$.
\end{lem}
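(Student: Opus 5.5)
The plan is to show three things: $I$ is nonempty, $I$ is upward closed, and $I$ contains its infimum. Together these give $I=[\inf I,\infty)$.

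\emph{Upward closedness.} This is immediate from the definition. If $r_0\in I$ and $r_1\ge r_0$, then every $r\ge r_1$ also satisfies $r\ge r_0$, so the defining condition holds for $r$. Hence $r_1\in I$, and $I$ is an interval unbounded above whenever it is nonempty.

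\emph{Nonemptiness.} Here we use that $\gpi$ is a probability measure and that $\ell$ takes finite values.
\begin{itemize}
\item Since $\sB(\theta,r)\uparrow\Theta$ as $r\to\infty$, we have $\gpi(\sB(\theta,r))\to 1$.
\item Hence there is $r_1$ with $\gpi(\sB(\theta,r))\ge 1/2$ for every $r\ge r_1$, and in particular $\gpi(\sB(\theta,2r))>0$.
\item For such $r$, $\gpi(\sB(\theta,2r))\le 1\le 2\gpi(\sB(\theta,r))\le e^{\gamma r}\gpi(\sB(\theta,r))$ as soon as $\gamma r\ge\log 2$.
\end{itemize}
So $\max\{r_1,\log 2/\gamma\}\in I$.

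\emph{The infimum belongs to $I$.} Let $r_*=\inf I$ and fix $r\ge r_*$.
\begin{itemize}
\item If $r>r_*$, there is $r_0\in I$ with $r_0\le r$, and the condition holds at $r$.
\item The only delicate case is $r=r_*$ itself, when $r_*\notin I$ a priori.
\end{itemize}
For that case, take a sequence $r_k\downarrow r_*$ with $r_k>r_*$. Each $r_k$ satisfies the condition. Since $\ell$-balls are closed-type sets $\{\ell\le r\}$, we have $\bigcap_k \sB(\theta,r_k)=\sB(\theta,r_*)$ and $\bigcap_k\sB(\theta,2r_k)=\sB(\theta,2r_*)$. By continuity from above of the finite measure $\gpi$, the inequality $\gpi(\sB(\theta,2r_k))\le e^{\gamma r_k}\gpi(\sB(\theta,r_k))$ passes to the limit, giving it at $r_*$.

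Positivity also passes to the limit: $\gpi(\sB(\theta,2r_*))\ge e^{-\gamma r_*}$ times the limit of $\gpi(\sB(\theta,2r_k))$ is not immediate, so I need a separate argument. The inequality for $r=r_*$ gives only an upper bound, so I argue through a smaller ball.
\begin{itemize}
\item If $r_*>0$, then $r_*/2$ may fail the condition, but we still have $\gpi(\sB(\theta,2r_*))\ge\gpi(\sB(\theta,r_*))$.
\item Applying the inequality at $r=r_k$ gives $\gpi(\sB(\theta,r_k))\ge e^{-\gamma r_k}\gpi(\sB(\theta,2r_k))\ge e^{-\gamma r_k}\gpi(\sB(\theta,2r_*))$, which is not helpful in this direction.
\end{itemize}
Instead I iterate the doubling upward: for any $j$, $\gpi(\sB(\theta,2^{j}r_k))\le e^{\gamma(2^{j}-1)r_k}\gpi(\sB(\theta,r_k))$. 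Choosing $j$ with $\gpi(\sB(\theta,2^{j}r_k))\ge 1/2$, uniformly in $k$ because $r_k\ge r_*$, yields $\gpi(\sB(\theta,r_k))\ge \tfrac12 e^{-\gamma(2^j-1)r_1}$. This lower bound does not depend on $k$, so in the limit $\gpi(\sB(\theta,r_*))>0$, and hence $\gpi(\sB(\theta,2r_*))>0$. This step requires $r_*>0$. If $r_*=0$, the same bound gives $\gpi(\{\ell(\theta,\cdot)\le 0\})>0$, which is the positivity needed at $r=0$.

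The main obstacle is this positivity at the endpoint. It is handled by the uniform-in-$k$ lower bound from iterated doubling, as above.
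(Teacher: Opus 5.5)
Your overall architecture matches the paper's. You get nonemptiness from $\gpi(\sB(\theta,r))\to 1$, then upward closedness, then closedness at $r_*=\inf I$ via continuity from above of $r\mapsto\gpi(\sB(\theta,r))$ along $r_k\downarrow r_*$. For $r_*>0$ your positivity argument is correct, though much longer than needed. The condition holds at $r=2r_*>r_*$, which gives directly $\gpi(\sB(\theta,2r_*))\ge e^{-2\gamma r_*}\gpi(\sB(\theta,4r_*))>0$; the paper instead picks some $r_n\le 2\inf I$.

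The gap is the case $r_*=0$, and it is the only genuinely delicate one. There $\sB(\theta,0)$ is typically $\gpi$-null, and what must be shown is that the condition holding for every $r>0$ forces $\gpi(\sB(\theta,0))>0$. You note yourself that your uniform choice of $j$ requires $r_*>0$, and then assert that \emph{the same bound} covers $r_*=0$. It does not. When $r_k\downarrow 0$, the index $j$ with $\gpi(\sB(\theta,2^{j}r_k))\ge 1/2$ must grow with $k$. Your lower bound $\frac12 e^{-\gamma(2^{j}-1)r_1}$ uses the largest term $r_1$ of the sequence in the exponent (a symbol you had already used for the mass threshold), so it then tends to $0$.

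The missing observation is that the exponent is a geometric sum controlled by its last term. Keep $r_k$ in the exponent. Fix $R$ with $\gpi(\sB(\theta,R))\ge 1/2$, take $r_k<R$, and let $j_k$ be minimal with $2^{j_k}r_k\ge R$. Then $(2^{j_k}-1)r_k<2^{j_k}r_k<2R$, so $\gpi(\sB(\theta,r_k))\ge\frac12 e^{-2\gamma R}$ uniformly in $k$, and continuity from above yields $\gpi(\sB(\theta,0))>0$.

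The paper does the same thing downward. With $r_k=2^{-k}$, chaining the inequality at $2^{-1},\dots,2^{-k}$ gives $\gpi(\sB(\theta,2^{-k}))\ge e^{-\gamma(1-2^{-k})}\gpi(\sB(\theta,1))$. Moreover $\gpi(\sB(\theta,1))>0$, since the condition at $r=1$ gives $0<\gpi(\sB(\theta,2))\le e^{\gamma}\gpi(\sB(\theta,1))$.
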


\begin{proof}
Since $\bigcup_{n\geq 1} \sB(\theta,n) = \Theta$, we have $\lim_n \gpi\pa{\sB(\theta,n)} = 1$,
hence $$\frac{\gpi\pa{\sB(\theta,2r)}}{\gpi\pa{\sB(\theta,r)}}e^{-\gamma r}\CV{r\to +\infty} 0$$
and $I$ is nonempty. Moreover, $I$ satisfies the property that if $r_0\in I$ then every $r\geq r_0$ is in $I$,
thus $I$ is an interval of the form $(\inf I, \infty)$ or $[\inf I, \infty)$.
Let us show that $\inf I\in I$.
We define the function $F:r\mapsto \gpi\pa{\sB(\theta,r)}$ and we denote by $(r_n)_{n\ge 1}$ a nonincreasing sequence in $I$ that converges to $\inf I$.
Since $\gpi$ is a finite measure, $F$ is right-continuous thus 
$\lim_n F(r_n) = F(\inf I)$
and
$\lim_n F(2r_n) = F(2\inf I)$,
hence $F(2\inf I) \le e^{\gamma \inf I} F(\inf I)$.
It remains to show that $F(2\inf I) > 0$.

In the case where $\inf I >0$, we find some $n$ that satisfies $r_n \le 2\inf I$. 
Since $r_n\in I$, this implies 
$0 < F(r_n) \leq F(2\inf I)$.
Finally, we deal with the case where $\inf I =0$.
Since $(0,\infty)\subset I$, it follows by induction that $F(2^{-n}) \ge e^{-\gamma(1-2^{-n})}F(1)$ and by letting $n\to \infty$
we obtain that
$F(0)\ge e^{-\gamma}F(1)>0$.
\end{proof}

\begin{lem}\label{lem-posterior-well-defined}
Under Assumption~\ref{Ass-posterior-well-defined}, the posterior is well-defined.
\end{lem}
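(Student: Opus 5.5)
The plan is to use the decomposition of Assumption~\ref{Ass-posterior-well-defined} to show that every integrand in \eref{def-TX} and \eref{def-piX} is finite, measurable, and that each normalising constant lies in $(0,+\infty)$. Fix $\gx\in\gE$ and write $\gT(\gx,\theta,\theta')=\gT_{0}(\gx,\theta,\theta')+f(\gx,\theta)-f(\gx,\theta')$ with $|\gT_{0}|\le K_{\gx}$. Since $\gpi$ is a probability measure, I will freely use that integrals of bounded measurable functions are finite.

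\emph{Step 1: the inner Gibbs measure.} For fixed $\theta$, the factor $e^{\lambda f(\gx,\theta)}$ appears in both the numerator and the denominator of the density in \eref{def-TX}, so it cancels. The density therefore equals
\[
\frac{\exp\cro{\lambda\gT_{0}(\gx,\theta,\theta')-\lambda f(\gx,\theta')}}{\int_{\Theta}\exp\cro{\lambda\gT_{0}(\gx,\theta,\theta'')-\lambda f(\gx,\theta'')}d\gpi(\theta'')}.
\]
Because $f\ge0$ and $|\gT_{0}|\le K_{\gx}$, the denominator lies in $[e^{-\lambda K_{\gx}}\int e^{-\lambda f(\gx,\cdot)}d\gpi,\ e^{\lambda K_{\gx}}]$, and $\int e^{-\lambda f}d\gpi>0$ since $f<\infty$. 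The denominator is thus in $(0,\infty)$. It is measurable in $(\gx,\theta)$ by Tonelli's theorem applied to a nonnegative jointly measurable integrand.

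\emph{Step 2: finiteness of $\gT(\gx,\theta)$.} Substituting the decomposition, I write
\[
\gT(\gx,\theta)=\int\gT_{0}\,d\widetilde\gpi+f(\gx,\theta)-\int f(\gx,\theta')\,d\widetilde\gpi(\theta').
\]
The first term is bounded by $K_{\gx}$. The last term is the main point to check. Its integrand is $f(\gx,\theta')e^{-\lambda f(\gx,\theta')}$ times a bounded factor, and since $u e^{-\lambda u}\le 1/(e\lambda)$ for $u\ge0$, the integral is finite. Hence $\gT(\gx,\theta)$ is a real number, measurable in $(\gx,\theta)$ by Fubini--Tonelli, applied separately to the positive and negative parts. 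More precisely,
\[
-K_{\gx}-\frac{e^{2\lambda K_{\gx}}}{e\lambda\int e^{-\lambda f}d\gpi}+f(\gx,\theta)\le\gT(\gx,\theta)\le K_{\gx}+f(\gx,\theta),
\]
so $\gT(\gx,\theta)=f(\gx,\theta)+O_{\gx}(1)$ uniformly in $\theta$.

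\emph{Step 3: the outer normalisation.} From Step 2,
\[
\exp\cro{-\lambda(1-\beta)\gT(\gx,\theta)}\le C_{\gx}\exp\cro{-\lambda(1-\beta)f(\gx,\theta)}\le C_{\gx},
\]
which is integrable against the probability $\gpi$. It is also bounded below by $c_{\gx}e^{-\lambda(1-\beta)f(\gx,\theta)}>0$ pointwise, so its integral is strictly positive. The density \eref{def-piX} is therefore well defined and jointly measurable in $(\gx,\theta)$, as a ratio of measurable functions with a positive finite denominator.

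The only delicate step is Step 2. There, integrability of the unbounded $f$ under the tilted measure has to come from the factor $e^{-\lambda f}$ produced by the tilting, rather than from any integrability assumption on $f$ itself.
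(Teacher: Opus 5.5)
Your proposal is correct and follows essentially the same route as the paper. Both arguments use the decomposition of Assumption~\ref{Ass-posterior-well-defined}, control the unbounded part $f$ under the tilted measure via $ue^{-\lambda u}\le 1/(e\lambda)$, and deduce a lower bound $\gT(\gx,\theta)\ge f(\gx,\theta)-C_{\gx}$ that makes $e^{-\lambda(1-\beta)\gT(\gx,\theta)}$ $\gpi$-integrable.

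The differences are cosmetic. You cancel $e^{\lambda f(\gx,\theta)}$ and split $\gT(\gx,\theta)$ into three terms, whereas the paper bounds $|\gT|e^{\lambda\gT}$ directly and applies $ze^{z}\ge -e^{-1}$ to $\gT_0-f$. You are also more explicit than the paper about measurability and about the strict positivity of the outer normalising constant.
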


\begin{proof}
  Let us fix $\gx\in \gE$ and $\theta \in \Theta$. 
  Since $f\ge 0$, it follows from Assumption~\ref{Ass-posterior-well-defined} that 
  $\lambda \gT(\gx,\theta,\theta')
  \le \lambda K_\gx + \lambda f(\gx,\theta)
  $,
  hence 
  $$0 < \int_{\Theta}\exp\cro{ \lambda \gT(\gx,\theta,\theta')}d\gpi(\theta') 
  \le e^{\lambda K_\gx + \lambda f(\gx,\theta)} \gpi(\Theta)
  < \infty.$$

  Next, we turn to the integral $\int_{\Theta}\gT(\gx,\theta,\theta')\exp\cro{\lambda\gT(\gx,\theta,\theta')}d\gpi(\theta')$.
  Assumption~\ref{Ass-posterior-well-defined} yields the pointwise bound 
  \begin{align*}
    |\gT(\gx,\theta,\theta')|e^{\lambda\gT(\gx,\theta,\theta')}
   &=
   \big|
    \gT_0(\gx,\theta,\theta') + f(\gx,\theta) - f(\gx,\theta')
   \big|
   e^{\lambda (\gT_0(\gx,\theta,\theta') + f(\gx,\theta) - f(\gx,\theta'))}
   \\
   &\le e^{\lambda K_\gx + \lambda f(\gx,\theta)}
   \Big[
    (K_\gx + f(\gx,\theta)) e^{-\lambda f(\gx,\theta')} + f(\gx,\theta')e^{-\lambda f(\gx,\theta')}
   \Big]
   \\
   &\le e^{\lambda K_\gx + \lambda f(\gx,\theta)}
   \Big[
    K_\gx + f(\gx,\theta) + \frac{1}{e\lambda}
   \Big],
  \end{align*}
  where the last bound follows from the inequality $ze^{-z} \leq e^{-1}$ for every $z\geq 0$.

  Finally, we show that $\int_{\Theta}\exp\cro{-\lambda(1-\beta)\gT(\gx,\theta)}d\gpi(\theta) < \infty$.
  We observe that 
  $$\gT(\gx,\theta) = f(\gx,\theta) + \frac{\int_\Theta(\gT_0(\gx,\theta,\theta') - f(\gx,\theta'))e^{\lambda(\gT_0(\gx,\theta,\theta') - f(\gx,\theta'))} d\gpi(\theta')}
    {\int_\Theta e^{\lambda(\gT_0(\gx,\theta,\theta') - f(\gx,\theta'))} d\gpi(\theta')}.$$
  By using the inequality $ze^{z} \geq -e^{-1}$ which holds for every $z\in \R$, we obtain that the numerator is at least $-1/(e\lambda)$. 
  The denominator is at least 
  $e^{-\lambda K_\gx} \int_\Theta e^{-\lambda f(\gx,\theta')} d\gpi(\theta')$
  and consequently
  $$e^{-\lambda(1-\beta)\gT(\gx,\theta)}\le \exp\pa{\frac{(1-\beta)e^{\lambda K_\gx - 1}}{\int_\Theta e^{-\lambda f(\gx,\theta')} d\gpi(\theta')}}
  e^{-\lambda(1-\beta)f(\gx,\theta)}.
  $$
  Since $f\ge 0$, the right-hand side is integrable with respect to $\gpi$.
\end{proof}

\subsection{Proof of Proposition~\ref{prop-lucien}}\label{prf-propL}
Let $\theta\in\Theta$, $r>0$. Since $\frN$ is measurable so is $\cB=\{v\in\R^{D}, \frN(v)\le 1\}$. Under \eref{def-homo} and \eref{eq-Lucien}, we observe that if $\theta'\in\sB(\theta,r)$, $\underline a\frN(\theta'-\theta)\le r$, hence 
$\frN((\underline a/r)^{1/\alpha}(\theta'-\theta))\le 1$. Similarly, if $\frN((\overline  a/r)^{1/\alpha}(\theta'-\theta))\le 1$, then $\overline a\frN(\theta'-\theta)\le r$ and $\theta'\in \sB(\theta,r)$. Setting $r_{1}=(r/\overline a)^{1/\alpha}$ and $r_{2}=(2r/\underline a)^{1/\alpha}$, hence $0<r_{1}\le r_{2}$, we deduce that 
\begin{align*}
B_{1}&=\ac{\theta+r_{1}v,\; v\in\cB}\cap \Theta\subset \sB(\theta,r)\\
B_{2}&=\ac{\theta+r_{2}v,\; v\in\cB}\cap \Theta\supset \sB(\theta,2r).
\end{align*}
Let $A$ be the affine function  defined on $\R^{D}$ by $x\mapsto \theta + (r_{1}/r_{2})(x-\theta)$ and let $\theta' \in B_{2}$. There exists $v\in\cB$ such that $\theta'=\theta+r_{2}v$, hence $A(\theta')=\theta+r_{1}v$ and since $\Theta$ is convex
\[
A(\theta')=\theta + \frac{r_{1}}{r_{2}}(\theta'-\theta)=\pa{1-\frac{r_{1}}{r_{2}}}\theta+\frac{r_{1}}{r_{2}}\theta'\in \Theta.
\]
We deduce that $A(B_{2})\subset B_{1}$ and since the Jacobian of $A$ is $(r_{1}/r_{2})^{D}$, 
\[
\nu(B_{1})\ge \nu(A(B_{2}))=\pa{\frac{r_{1}}{r_{2}}}^{D}\nu(B_{2}). 
\]
Using \eref{eq-Lucien}, we deduce that 
\begin{align*}
\gpi\pa{\sB(\theta,2r)}
\le \overline b\nu\pa{\sB(\theta,2r)}
\le \overline b\nu\pa{B_{2}}
\le \overline b\pa{\frac{r_{2}}{r_{1}}}^{D} \nu\pa{B_{1}}
&\le \frac{\overline b}{\underline b}\pa{\frac{r_{2}}{r_{1}}}^{D} \gpi\pa{\sB(\theta,r)}
\\&=\frac{\overline b}{\underline b}\pa{\frac{2\overline a}{\underline a}}^{D/\alpha} \gpi\pa{\sB(\theta,r)}.
\end{align*}
If $\gpi\pa{\sB(\theta,2r)}=0$, then by iterating we get $\gpi\pa{\sB(\theta,2^j r)}=0$ for each $j\ge 0$,
which contradicts the fact that $\gpi$ is a probability measure.
The claim then follows from the definition~\eref{prop-epsn} of the $\gpi$-complexity.

\subsection{Proof of Proposition~\ref{prop-densite}}\label{prf-prop-densite}
In this proof we assume that $B$ is of the form $\{|I|\le t\}$ for some $t$ for which $\P(B)>0$. We prove that Assumption~\ref{Ass-CasDet} is then satisfied at $\theta\et$ with 
\begin{align*}
\frc_{3}(\lambda,\beta)
&=-\frac{\lambda}{2}\cro{
\frac{1}{2}-\frac{5\beta}{2}
-2\lambda(\beta^{2}+1)\phi\pa{\lambda(1+\beta)}
},\\
\frc_{3}(\lambda,\overline \beta)
&=2\lambda\cro{
\frac{5\overline \beta}{2}-\frac{1}{2}
+2\lambda(\overline \beta^{2}+1)\phi\pa{\lambda(1+\overline \beta)}
}.
\end{align*}
and for $\kappa\in\{\beta,\overline \beta\}$, 
\begin{align*}
\frc_{1}(\lambda,\kappa)
&=2\lambda\cro{
\frac{5}{2}+2\lambda\phi\pa{\lambda(1+\kappa)}
},\\
\frc_{2}(\lambda,\kappa)
&=\frac{\lambda\kappa}{2}\cro{
\frac{1}{2}-2\lambda\kappa\phi\pa{\lambda(1+\kappa)}
}.
\end{align*}
and $A_{0}(\lambda,\kappa,\gP\et,B,\theta\et)=2\lambda (1+\kappa)t-\log\P(B)+\frc_{0}(\lambda,\kappa)nh^{2}(P\et,P_{\theta\et})$ with 
\begin{align*}
\frc_{0}(\lambda,\overline \beta)
&=\lambda\cro{
4+\frac{11}{2}\overline \beta
+2\lambda\pa{4+\overline \beta^{2}}\phi\pa{\lambda(1+\overline \beta)}
},\\
\frc_{0}(\lambda,\beta)
&=\lambda\cro{
\frac{11}{2}-2\beta
+2\lambda\pa{1-2\beta^{2}}\phi\pa{\lambda(1+\beta)}
}.
\end{align*}
We start with the following technical result, which both generalizes and improves over Proposition~3 in Baraud and Birg\'e~\citeyearpar{BarBir2018}.
Note that we defined $H^2$ in \eqref{eq-def-H}.

\begin{prop}\label{prop00}
Let $\mu$ be a $\sigma$-finite measure on $(\cX,\cA)$ and $Q,Q'$ be two finite measures on $(\cX,\cA)$ dominated by $\mu$ with densities $q$ and $q'$ respectively. Then, for every finite measure $R$ on $(\cX,\cA)$,
\begin{align}
\int_{\cX}\psi\pa{\sqrt{\frac{q'}{q}}}dR+\frac{1}{4}\cro{\int_{\cX}q d\mu-\int_{\cX}q'd\mu}
&\le \frac 52 H^{2}\pa{R,Q}-\frac 12 H^{2}\pa{R,Q'}. \label{eq-prop00}
\end{align}
Besides, 
\begin{equation}\label{eq-prop01}
\int_{\cX}\psi^{2}\pa{\sqrt{\frac{q'}{q}}}dR \le  2\cro{H^{2}(R,Q)+H^{2}(R,Q')}.
\end{equation}
\end{prop}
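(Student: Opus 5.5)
The plan is to reduce both inequalities to elementary pointwise inequalities between nonnegative reals, by working with square roots of densities with respect to a common dominating measure. Let $\overline\mu=\mu+R$. This measure is $\sigma$-finite and dominates $Q$, $Q'$ and $R$. Write $q,q'$ for the densities of $Q,Q'$ with respect to $\overline\mu$; these are $\overline\mu$-a.e.\ equal to the original ones multiplied by $d\mu/d\overline\mu$, so the ratio $q'/q$ is unchanged where it matters. Write $\varrho=dR/d\overline\mu$, and set $a=\sqrt q$, $b=\sqrt{q'}$, $c=\sqrt{\varrho}$.

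With this notation,
\[
\psi\pa{\sqrt{q'/q}}=\frac{b-a}{a+b},\qquad 2H^{2}(R,Q)=\int_{\cX}(c-a)^{2}\,d\overline\mu,\qquad 2H^{2}(R,Q')=\int_{\cX}(c-b)^{2}\,d\overline\mu .
\]
The formula for $\psi$ holds with the conventions $\psi(+\infty)=1$ (case $a=0<b$) and $0/0=1$ (case $a=b=0$, where $\psi(1)=0$). Both sides of \eref{eq-prop00} are then integrals against $\overline\mu$; all terms are finite because the measures are finite and $|\psi|\le1$. So \eref{eq-prop00} follows once I establish, for all $a,b,c\ge0$, the pointwise inequality
\[
c^{2}\frac{b-a}{a+b}+\frac14(a^{2}-b^{2})\le\frac54(c-a)^{2}-\frac14(c-b)^{2}.
\]

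To prove it, I expand the difference RHS$-$LHS as a quadratic polynomial in $c$. The $b^{2}$ terms cancel, and the difference becomes
\[
\frac{2a}{a+b}\,c^{2}+\frac{b-5a}{2}\,c+a^{2}.
\]
I then split into cases.
\begin{itemize}
\item If $a=0$, the expression reduces to $bc/2\ge0$.
\item If $b\ge5a$, all coefficients are nonnegative, so the expression is nonnegative for $c\ge0$.
\item Otherwise it suffices that the discriminant is nonpositive, that is, $(5a-b)^{2}(a+b)\le32a^{3}$. By homogeneity I set $a=1$, $b=t\in[0,5)$. The function $f(t)=(5-t)^{2}(1+t)$ has derivative $f'(t)=3(5-t)(1-t)$, so its maximum on $[0,5]$ is $f(1)=32$.
\end{itemize}
This last step is the delicate point: equality is attained, so the constants $5/2$ and $1/2$ are exactly calibrated and no slack is available.

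For \eref{eq-prop01}, the same reduction shows it suffices to check, for all $a,b,c\ge0$ with $\psi=(b-a)/(a+b)$ (and $\psi=0$ when $a=b=0$),
\[
c^{2}\psi^{2}\le(c-a)^{2}+(c-b)^{2}.
\]
As a quadratic in $c$, the difference is
\[
(2-\psi^{2})c^{2}-2(a+b)c+a^{2}+b^{2},
\]
whose leading coefficient is positive. Using $(2-\psi^{2})(a+b)^{2}=a^{2}+b^{2}+6ab$, nonpositivity of the discriminant amounts to $(a+b)^{4}\le(a^{2}+b^{2})(a^{2}+b^{2}+6ab)$. Writing $s=a^{2}+b^{2}$ and $p=ab$, this becomes $4p^{2}\le2sp$, which holds because $2p\le s$. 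The degenerate case $a+b=0$ is trivial. Integrating against $\overline\mu$ then gives \eref{eq-prop01}.
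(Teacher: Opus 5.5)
Your route is the paper's: you dominate by $\mu+R$, reduce to a pointwise inequality, and study the same quadratic $\frac{2a}{a+b}c^{2}+\frac{b-5a}{2}c+a^{2}$ with the same case split ($a=0$, $b\ge 5a$, nonpositive discriminant). The algebra is correct. Maximising $(5-t)^{2}(1+t)$ on $[0,5]$ does the same job as the paper's factorisation of the discriminant as $(b-a)^{2}(b-7a)/(4(a+b))$. Your discriminant argument for \eref{eq-prop01} replaces the paper's one-line Cauchy--Schwarz bound.

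There is, however, one false step: the claim that after passing to $\overline\mu$-densities ``the ratio $q'/q$ is unchanged where it matters''.
\begin{itemize}
\item \emph{Where it fails.} The ratio is unchanged only on $\{d\mu/d\overline\mu>0\}$. The set $S=\{d\mu/d\overline\mu=0\}$ is $\mu$-null, but $R(S)>0$ as soon as $R$ is not dominated by $\mu$ (e.g.\ $\mu$ Lebesgue, $R=\delta_{0}$). The statement allows this, and the paper needs it, since it applies the proposition with $R=P\et$ or $R=\nu_{i}\et$, which need not be dominated by $\mu$.
\item \emph{Why it matters.} On $S$ the given $q,q'$ are arbitrary, because they are only determined $\mu$-a.e. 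So the integrand $\psi(\sqrt{q'/q})$ of the statement can take any value in $[-1,1]$ there; for instance it equals $\psi(+\infty)=1$ if $q(x)=0<q'(x)$. Your substituted densities vanish on $S$ and produce $\psi(1)=0$ instead.
\item \emph{Consequence.} As written, you prove the inequality for one particular version of $(q,q')$, while the left-hand side of \eref{eq-prop00} genuinely depends on the version.
\end{itemize}
The repair is short and is what the paper does. Keep the original $q,q'$ inside $\psi$, and note that on $S$ the $\overline\mu$-densities of $Q,Q'$ vanish, i.e.\ $a=b=0$. The required pointwise inequalities then become $\psi c^{2}\le c^{2}$ and $\psi^{2}c^{2}\le 2c^{2}$, which hold for every $\psi\in[-1,1]$. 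This is why the paper proves its $u=v=0$ case for all $t\le 1$. The same remark covers the case $a=b=0$, which your quadratic misses: its leading coefficient $2a/(a+b)$ is undefined there, and ``reduces to $bc/2$'' tacitly assumes $b>0$.
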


\begin{proof}
We define the $\sigma$-finite measure $\nu = \mu + R$ and we let $f,g$ denote densities such that $\mu = f \cdot \nu$ and $R = g \cdot \nu$.
We also define the functions $q_\nu:x\mapsto q(x)f(x)$ and $q_\nu':x\mapsto q'(x)f(x)$, which satisfy
$Q = q_\nu \cdot \nu$ and $Q' = q_\nu' \cdot \nu$.

We introduce the function $\zeta:\R^3 \to \R$, $(u,v,w)\mapsto \frac 54 (w-u)^2 - \frac 14 (w-v)^2 + \frac 14 (v^2-u^2)$ and
we prove the following inequality:
for every $u,v,w \ge 0$, 
$$\begin{cases}
  &\text{if } u+v >0,\quad \frac{v-u}{v+u}w^2 \le \zeta(u,v,w),
  \\
  &\text{if } u=v=0,\quad \forall t\in (-\infty, 1],\, tw^2 \le \zeta(u,v,w).
\end{cases}$$
First, we fix arbitrary $u,v\ge 0$ such that $u+v>0$ and we define the quadratic function 
$\xi:\R \to \R$, $w\mapsto \zeta(u,v,w) - \frac{v-u}{v+u}w^2 = \frac{2u}{u+v}w^2 + \frac{v-5u}{2}w + u^2$.
In the case where $u=0$, $\xi$ is clearly nonnegative over $[0,\infty)$.
We assume next that $u\neq 0$. The global minimiser of $\xi$ over $\R$ is $w_0=\frac{(5u-v)(u+v)}{8u}$.
If $w_0 \le 0$, then $\forall w\in [0,\infty)$, $\xi(w) \ge \xi(0) = u^2$.
If $w_0 \ge 0$ (i.e., $v\le 5u$), the discriminant of $\xi$ is $\frac{(v-u)^2(v-7u)}{4(u+v)} \le 0$.
Consequently, in every case,  $\xi$ is nonnegative over $[0,\infty)$.
When $u=v=0$, $\zeta(u,v,w) = w^2$ and the claimed inequality holds.

If $x\in \{q_{\nu} + q_{\nu}' > 0\}$, then $f(x) > 0$ and 
$$\psi\pa{\sqrt{\frac{q'(x)}{q(x)}}} 
= \psi\pa{\sqrt{\frac{q_\nu'(x)}{q_\nu(x)}}} 
= \frac{\sqrt{q_\nu'(x)} - \sqrt{q_\nu(x)}}{\sqrt{q_\nu'(x)} + \sqrt{q_\nu(x)}}.$$
On the other hand, if $x\in \{q_{\nu} + q_{\nu}' = 0\} \cap \{f > 0\}$, then $q(x) = q'(x) = 0$ and 
$\psi(\sqrt{q'(x) / q(x)}) = \psi(1) = 0$.
Finally, if $x\in \{q_{\nu} + q_{\nu}' = 0\} \cap \{f = 0\}$, we note that $\psi(\sqrt{q'(x) / q(x)}) \le 1$.
The following inequality is therefore valid for every $x\in \mathcal X$:
$\psi(\sqrt{q'(x) / q(x)})  g(x) \le \zeta\big(\sqrt{q_\nu(x)}, \sqrt{q_\nu'(x)}, \sqrt{g(x)}\big)$, i.e., 
$$\psi\pa{\sqrt{\frac{q'(x)}{q(x)}}} g(x)\le 
\frac 54 \Big(\sqrt{g(x)} - \sqrt{q_\nu(x)}\Big)^2 
- \frac 14 \Big(\sqrt{g(x)} - \sqrt{q_\nu'(x)}\Big)^2 
+ \frac 14 (q_\nu'(x) - q_\nu(x)).$$
By integrating this inequality with respect to the measure $\nu$, we obtain \eqref{eq-prop00}.

We turn to the proof of \eqref{eq-prop01}, which follows a similar blueprint.
With $\zeta:\R^3 \to \R$, $(u,v,w)\mapsto (w-u)^2 + (w-v)^2$,
we show the following inequality:
for every $u,v,w \ge 0$, 
$$\begin{cases}
  &\text{if } u+v >0,\quad \left(\frac{v-u}{v+u}\right)^2 w^2 \le \zeta(u,v,w),
  \\
  &\text{if } u=v=0,\quad \forall t\in [-1,1],\, t^2 w^2 \le \zeta(u,v,w).
\end{cases}$$
Let us fix arbitrary $u,v\ge 0$ such that $u+v>0$.
By the Cauchy--Schwarz inequality for the Euclidean inner product on $\R^2$,
\begin{align*}
  \left(\frac{(v-u)w}{v+u}\right)^2 &= \Big\langle (\frac{v}{v+u}, -\frac{u}{v+u}), (w-u, w-v) \Big\rangle^2
\\&\le \left[\frac{v^2}{(v+u)^2} + \frac{u^2}{(v+u)^2}\right][(w-u)^2 + (w-v)^2]
\le (w-u)^2 + (w-v)^2.
\end{align*}
The case where $u=v=0$ is trivial.

Similar steps as above then yield the inequality
$$\psi^2\pa{\sqrt{\frac{q'(x)}{q(x)}}} g(x)\le 
\Big(\sqrt{g(x)} - \sqrt{q_\nu(x)}\Big)^2 
+ \Big(\sqrt{g(x)} - \sqrt{q_\nu'(x)}\Big)^2 
,$$
which holds for every $x\in \mathcal X$ and \eqref{eq-prop01} follows.

\end{proof}

Next, we establish the following lemma. 
\begin{lem}\label{lem-E-V}
Let $\lambda,\kappa>0$, $\etc{Y}$ be $n$ independent random variables with distributions $\etc{P\et}$ respectively on a measure space $(\cY,\sbY,\nu)$ 
and $P=p\cdot\nu,Q=q\cdot\nu$ and $R=r\cdot \nu$  three probabilities on $(\cY,\sbY)$. We set $P\et=n^{-1}\sum_{i=1}^{n}P_{i}\et$, $a_{0}=5/2$, $a_{1}=1/2$, $a_{2}^{2}=2$ and 
\begin{align}
c_{1}(\lambda,\kappa)&=\lambda\pa{a_{0}+\lambda\phi(\lambda (1+\kappa))a_{2}^{2}},\label{defc1}\\
c_{2}(\lambda,\kappa)&=\lambda\kappa\pa{ a_{1}-\lambda\phi(\lambda (1+\kappa))\kappa a_{2}^{2}}\label{defc2},\\
c_{3}(\lambda,\kappa)&=\lambda\pa{\kappa a_{0}-a_{1}+\lambda\phi( \lambda (1+\kappa))a_{2}^{2}(\kappa^{2}+1)}.\label{defc3}
\end{align}
Then, the random variables 
\[
Z_{i}=\kappa\psi\pa{\sqrt{\frac{q(Y_{i})}{r(Y_{i})}}}-\psi\pa{\sqrt{\frac{p(Y_{i})}{r(Y_{i})}}}\quad \text{for $i\in\{1,\ldots,n\}$}.
\]
satisfy 
\begin{align}
\frac{1}{n}\sum_{i=1}^{n}\E\cro{Z_{i}}&\le (\kappa a_{0}-a_{1})h^{2}(P\et,R)-\kappa a_{1}h^{2}(P\et,Q)+a_{0}h^{2}(P\et,P)\label{eq-E}\\
\frac{1}{n}\sum_{i=1}^{n}\E\cro{Z_{i}^{2}}&\le 2a_{2}^{2}\pa{\kappa^{2}+1}h^{2}(P\et,R)+2\kappa^{2}a_{2}^{2}h^{2}(P\et,Q)+2a_{2}^{2}h^{2}(P\et,P)\label{eq-V}
\end{align}
and 
\begin{align}
\log \E\cro{\exp\pa{\lambda \sum_{i=1}^{n}Z_{i}}}&\le c_{1}(\lambda,\kappa) n  h^{2}(P\et,P)-c_{2}(\lambda,\kappa)n h^{2}(P\et,Q)\label{eq-L}\\
&\hspace{1cm}+ c_{3}(\lambda,\kappa)n h^{2}(P\et,R).\nonumber
\end{align}
When $c_{2}(\lambda,\kappa)\ge 0$, we derive that for every probability measure $\overline P$ on $(\cY,\sY)$, 
\begin{align}
&\log\E\cro{\exp\pa{\lambda\sum_{i=1}^{n}Z_{i}}}\label{eq-L1+}\\
&\le \cro{2c_{1}(\lambda,\kappa)+c_{2}(\lambda,\kappa)+2c_{3}(\lambda,\kappa)}nh^{2}(P\et,\overline P)\nonumber\\
&\hspace{1cm}+2c_{1}(\lambda,\kappa) nh^{2}(\overline P,P)-\frac{c_{2}(\lambda,\kappa)}{2}n h^{2}(\overline P,Q)+2c_{3}(\lambda,\kappa) n h^{2}(\overline P,R)\nonumber
\end{align}
when $c_{3}(\lambda,\kappa)\ge 0$, while for $c_{3}(\lambda,\kappa)<0$ 
\begin{align}
&\log\E\cro{\exp\pa{\lambda\sum_{i=1}^{n}Z_{i}}}\label{eq-L1-}\\
&\le \cro{2c_{1}(\lambda,\kappa)+c_{2}(\lambda,\kappa)+|c_{3}(\lambda,\kappa)|}nh^{2}(P\et,\overline P)\nonumber\\
&\hspace{1cm}+2c_{1}(\lambda,\kappa) nh^{2}(\overline P,P)-\frac{c_{2}(\lambda,\kappa)}{2}n h^{2}(\overline P,Q)-\frac{|c_{3}(\lambda,\kappa)|}{2} n h^{2}(\overline P,R).\nonumber
\end{align}
\end{lem}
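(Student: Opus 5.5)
We begin with the first-moment bound \eref{eq-E}. For each $i$, $\E[Z_{i}]=\kappa\int\psi(\sqrt{q/r})\,dP_{i}\et-\int\psi(\sqrt{p/r})\,dP_{i}\et$. Averaging over $i$ turns this into integrals against $P\et$. We then apply \eref{eq-prop00} of Proposition~\ref{prop00} twice with $R$ replaced by $P\et$; since all measures are probabilities, the mass terms $\frac14(\int q-\int q')$ vanish. Taking $(Q,Q')=(R,Q)$ gives
\[
\int\psi(\sqrt{q/r})\,dP\et\le a_{0}h^{2}(P\et,R)-a_{1}h^{2}(P\et,Q).
\]
For the negative term we use the antisymmetry $\psi(\sqrt{p/r})=-\psi(\sqrt{r/p})$ and apply \eref{eq-prop00} with $(Q,Q')=(P,R)$:
\[
-\int\psi(\sqrt{p/r})\,dP\et=\int\psi(\sqrt{r/p})\,dP\et\le a_{0}h^{2}(P\et,P)-a_{1}h^{2}(P\et,R).
\]
Here $H^{2}$ coincides with $h^{2}$ for probabilities. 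Summing these with weights $\kappa$ and $1$ yields \eref{eq-E}.

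For \eref{eq-V} we use $(x-y)^{2}\le2x^{2}+2y^{2}$ together with \eref{eq-prop01}. This gives $\int\psi^{2}(\sqrt{q/r})\,dP\et\le a_{2}^{2}[h^{2}(P\et,R)+h^{2}(P\et,Q)]$, and the analogous bound holds for $p$. Collecting terms produces exactly $2a_{2}^{2}(\kappa^{2}+1)h^{2}(P\et,R)+2\kappa^{2}a_{2}^{2}h^{2}(P\et,Q)+2a_{2}^{2}h^{2}(P\et,P)$.

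For the Laplace bound \eref{eq-L} we use independence to factor the expectation, $\E[\exp(\lambda\sum Z_{i})]=\prod_{i}\E[e^{\lambda Z_{i}}]$. Since $|\psi|\le1$, we have $\lambda Z_{i}\le\lambda(1+\kappa)$. Applying \eref{eq-debase} to each factor and taking logarithms gives
\[
\log\E\Bigl[e^{\lambda\sum Z_{i}}\Bigr]\le\lambda\sum_{i}\E Z_{i}+\frac{\lambda^{2}\phi(\lambda(1+\kappa))}{2}\sum_{i}\E Z_{i}^{2}.
\]
Inserting \eref{eq-E} and \eref{eq-V}, each multiplied by $n$, gives the coefficients $c_{1}$, $c_{2}$ and $c_{3}$ of \eref{defc1}--\eref{defc3} directly. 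For example, the coefficient of $h^{2}(P\et,R)$ is $\lambda(\kappa a_{0}-a_{1})+\lambda^{2}\phi\,a_{2}^{2}(\kappa^{2}+1)=c_{3}$.

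The final step passes from $P\et$ to an arbitrary $\overline P$. We use the triangle inequality for $h$ together with the elementary bounds $h^{2}(P\et,S)\le2h^{2}(P\et,\overline P)+2h^{2}(\overline P,S)$ and $h^{2}(P\et,S)\ge\frac12h^{2}(\overline P,S)-h^{2}(P\et,\overline P)$. These are applied term by term according to the sign of each coefficient:
\begin{itemize}
\item the $c_{1}$ term always carries a positive coefficient, so we use the upper bound, contributing $2c_{1}$;
\item the $-c_{2}$ term, with $c_{2}\ge0$, uses the lower bound, contributing $c_{2}$ to $h^{2}(P\et,\overline P)$ and $-c_{2}/2$ to $h^{2}(\overline P,Q)$;
\item the $c_{3}$ term uses the upper bound when $c_{3}\ge0$, giving \eref{eq-L1+}, and the lower bound when $c_{3}<0$, giving \eref{eq-L1-}.
\end{itemize}

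The only delicate point is the orientation in Proposition~\ref{prop00} for the negative $\psi$ term, which must be handled through the antisymmetry of $\psi$. The second-moment constants also need careful bookkeeping; everything else is routine.
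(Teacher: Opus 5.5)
Your proposal is correct and follows essentially the same route as the paper. Both average over $i$ to reduce to integrals against $P\et$, apply both parts of Proposition~\ref{prop00} (the mass terms vanish for probabilities), use independence with \eref{eq-debase} and the bound $Z_i\le\kappa+1$, and then pass from $P\et$ to $\overline P$ with the two triangle-type inequalities according to the sign of each coefficient. The only difference is that you state explicitly the antisymmetry $\psi(\sqrt{p/r})=-\psi(\sqrt{r/p})$, which the paper uses implicitly when it bounds $-\E[\psi(\sqrt{p(Y)/r(Y)})]$.
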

\begin{proof}
Let us note that for every integrable function $f$ on $\cY$, 
\begin{align*}
\frac{1}{n}\sum_{i=1}^{n}\E\cro{f(Y_{i})}=\E\cro{f(Y)}
\end{align*}
where $Y$ is a random variable with distribution $P\et$. 
By Proposition~\ref{prop00} and applying successively the previous equality to the functions $\overline \psi:y\mapsto\kappa\psi(\sqrt{q(y)/r(y)})-\psi(\sqrt{p(y)/r(y)})$ and $\overline \psi^{2}$, we derive that 
\begin{align}
\frac{1}{n}\sum_{i=1}^{n}\E\cro{Z_{i}}&=\frac{1}{n}\sum_{i=1}^{n}\E\cro{\overline \psi(Y_{i})}=\E\cro{\overline \psi(Y)}=\kappa\E\cro{\psi\pa{\sqrt{\frac{q(Y)}{r(Y)}}}}-\E\cro{\psi\pa{\sqrt{\frac{p(Y)}{r(Y)}}}}\nonumber\\
&\le \kappa\cro{a_{0}h^{2}(P\et,R)-a_{1}h^{2}(P\et,Q)}-\cro{a_{1}h^{2}(P\et,R)-a_{0}h^{2}(P\et,P)}\nonumber\\
&= a_{0}h^{2}(P\et,P)-\kappa a_{1}h^{2}(P\et,Q)+(\kappa a_{0}-a_{1})h^{2}(P\et,R)\label{eq-lem-EV00}
\end{align}
and 
\begin{align}
\frac{1}{n}\sum_{i=1}^{n}\E\cro{Z_{i}^{2}}&=\E\cro{\overline \psi^{2}(Y)}\le 2\kappa^{2}\E\cro{\psi^{2}\pa{\sqrt{\frac{q(Y)}{r(Y)}}}}+2\E\cro{\psi^{2}\pa{\sqrt{\frac{p(Y)}{r(Y)}}}}\nonumber\\
&\le 2\kappa^{2}a_{2}^{2}\cro{h^{2}(P\et,Q)+h^{2}(P\et,R)}+2a_{2}^{2}\cro{h^{2}(P\et,P)+h^{2}(P\et,R)}\nonumber\\
&=2a_{2}^{2}h^{2}(P\et,P)+2\kappa^{2}a_{2}^{2}h^{2}(P\et,Q)+2a_{2}^{2}\pa{\kappa^{2}+1}h^{2}(P\et,R)\label{eq-lem-EV01}.
\end{align}
This proves \eref{eq-E} and \eref{eq-V}.

Since $\overline \psi$ is not larger than $b=\kappa+1$, it follows from \eref{eq-debase} that for every $i\in\{1,\ldots,n\}$, 
\begin{align*}
\E\cro{\exp\pa{\lambda Z_{i}}}&\le 1+\lambda\E\cro{Z_{i}}+\frac{\lambda^{2}\phi\pa{\lambda b }}{2}\E\cro{Z_{i}^{2}}\le \exp\cro{\lambda\E\cro{Z_{i}}+\frac{\lambda^{2}\phi\pa{\lambda b }}{2}\E\cro{Z_{i}^{2}}}
\end{align*}
and since the $Z_{i}$ are independent, 
\begin{align*}
\log\E\cro{\exp\pa{\lambda\sum_{i=1}^{n}Z_{i}}}&=\sum_{i=1}^{n}\log\E\cro{\exp\pa{\lambda Z_{i}}}\\
&\le \lambda n\pa{\frac{1}{n}\sum_{i=1}^{n}\E\cro{Z_{i}}}+\frac{\lambda^{2}\phi\pa{\lambda b }n}{2}\pa{\frac{1}{n}\sum_{i=1}^{n}\E\cro{Z_{i}^{2}}}.
\end{align*}
By using \eref{eq-E} and \eref{eq-V} we get that 
\begin{align*}
&\log\E\cro{\exp\pa{\lambda\sum_{i=1}^{n}Z_{i}}}\\
&\le \lambda n \cro{ (\kappa a_{0}-a_{1})h^{2}(P\et,R)-\kappa a_{1}h^{2}(P\et,Q)+a_{0}h^{2}(P\et,P)}\\
&\hspace{1cm} +\lambda^{2}\phi\pa{\lambda b }n\cro{a_{2}^{2}\pa{\kappa^{2}+1}h^{2}(P\et,R)+\kappa^{2}a_{2}^{2}h^{2}(P\et,Q)+a_{2}^{2}h^{2}(P\et,P)}\\
&= \lambda\pa{a_{0}+\lambda\phi(\lambda b)a_{2}^{2}}nh^{2}(P\et,P)- \lambda\pa{\kappa a_{1}-\lambda\phi(\lambda b)\kappa^{2}a_{2}^{2}}nh^{2}(P\et,Q)\\
&\hspace{1cm}+ \lambda\pa{\kappa a_{0}-a_{1}+\lambda\phi( \lambda b)a_{2}^{2}(\kappa^{2}+1)}nh^{2}(P\et,R)
\end{align*}
which is \eref{eq-L}. Using the inequalities 
\[
h^{2}(P\et,S)\le 2\cro{h^{2}(P\et,\overline P)+h^{2}(\overline P,S)}\quad \text{and}\quad h^{2}(P\et,S)\ge \frac{1}{2}h^{2}(\overline P,S)-h^{2}(P\et,\overline P)
\]
which holds for every probability $S$, we obtain that  when $c_{2}(\lambda,\kappa)$ and $c_{3}(\lambda,\kappa)$ are nonnegative  
\begin{align*}
\log\E\cro{\exp\pa{\lambda\sum_{i=1}^{n}Z_{i}}}&\le 2c_{1}(\lambda,\kappa) n \cro{h^{2}(P\et,\overline P)+h^{2}(\overline P,P)}\\
&\hspace{0.5cm} -c_{2}(\lambda,\kappa)n \cro{\frac{1}{2}h^{2}(\overline P,Q)-h^{2}(P\et,\overline P)}\\
&\hspace{1.5cm}  + 2c_{3}(\lambda,\kappa)n\cro{h^{2}(P\et,\overline P)+h^{2}(\overline P,R)}
\end{align*}
which gives \eref{eq-L1+}. We obtain \eref{eq-L1-} under the assumption $c_{2}(\lambda,\kappa)\ge 0$, $c_{3}(\lambda,\kappa)<0$ from the inequality 
\begin{align*}
\log\E\cro{\exp\pa{\lambda\sum_{i=1}^{n}Z_{i}}}&\le 2c_{1}(\lambda,\kappa) n \cro{h^{2}(P\et,\overline P)+h^{2}(\overline P,P)}\\
&\hspace{0.5cm} -c_{2}(\lambda,\kappa)n \cro{\frac{1}{2}h^{2}(\overline P,Q)-h^{2}(P\et,\overline P)}\\
&\hspace{1.5cm}  + c_{3}(\lambda,\kappa)n\cro{\frac{1}{2}h^{2}(\overline P,R)-h^{2}(P\et,\overline P)}.
\end{align*}
\end{proof}

We write $\Delta_{1,\kappa}$ in lieu of $\Delta_{\kappa}$ when the test statistic is $\gT_1$. Observe that 
\begin{align*}
\Delta_{1,\kappa}(\bsX,\gtheta)&=\sum_{i=1}^{n}\cro{\kappa\psi\pa{\sqrt{\frac{p_{\theta_{2}}(X_{i})}{p_{\theta_{3}}(X_{i})}}}-\psi\pa{\sqrt{\frac{p_{\theta_{1}}(X_{i})}{p_{\theta_{3}}(X_{i})}}}}\\
&=\sum_{i=1}^{n}\kappa\cro{\psi\pa{\sqrt{\frac{p_{\theta_{2}}(X_{i})}{p_{\theta_{3}}(X_{i})}}}-\psi\pa{\sqrt{\frac{p_{\theta_{2}}(X_{i}\et)}{p_{\theta_{3}}(X_{i}\et)}}}}\\
&\hspace{1cm}-\sum_{i=1}^{n}\cro{\psi\pa{\sqrt{\frac{p_{\theta_{1}}(X_{i})}{p_{\theta_{3}}(X_{i})}}}-\psi\pa{\sqrt{\frac{p_{\theta_{1}}(X_{i}\et)}{p_{\theta_{3}}(X_{i}\et)}}}}\\
&\hspace{2cm}+\sum_{i=1}^{n}\cro{\kappa\psi\pa{\sqrt{\frac{p_{\theta_{2}}(X_{i}\et)}{p_{\theta_{3}}(X_{i}\et)}}}-\psi\pa{\sqrt{\frac{p_{\theta_{1}}(X_{i}\et)}{p_{\theta_{3}}(X_{i}\et)}}}}.
\end{align*}
Since the function $\psi$ is bounded by 1 and the random variables $X_{i}=X_{i}\et$ coincide for $i\not\in I$, we obtain that for every $\gtheta=(\theta_{1},\theta_{2},\theta_{3})\in \Theta^{3}$
\begin{align}
\Delta_{1,\kappa}(\bsX,\gtheta)\le 2(\kappa+1)|I|+\Delta_{1,\kappa}(\bsX\et,\gtheta).\label{eq-b1}
\end{align}
Since on $B$, $|I|\le t$, we deduce from \eref{eq-b1} that for every $\lambda>0$ and $\gtheta=(\theta_{1},\theta_{2},\theta_{3})\in \Theta^{3}$, 
\begin{align}
\log\E_{B}\cro{\exp\pa{\lambda \Delta_{1,\kappa}(\bsX,\gtheta)}}&=-\log\P(B)+\log\E\cro{\exp\pa{\lambda \Delta_{1,\kappa}(\bsX,\gtheta)}\1_{B}}\nonumber\\
&\le -\log\P(B)+\log \E\cro{\exp\cro{\lambda\pa{2(\kappa+1)t+\Delta_{1,\kappa}(\bsX\et,\gtheta)}}\1_{B}}\nonumber\\
&\le -\log\P(B)+2\lambda(\kappa+1)t+\log \E\cro{\exp\pa{\lambda\Delta_{1,\kappa}(\bsX\et,\gtheta)}}.\label{eq-b2}
\end{align}
Since $X_{1}\et,\ldots,X_{n}\et$ are independent, $\Delta_{1,\kappa}(\bsX\et,\gtheta)$ is the sum of the independent bounded random variables 
\[
Z_{i}=\kappa\psi\pa{\sqrt{\frac{p_{\theta_{2}}(X_{i}\et)}{p_{\theta_{3}}(X_{i}\et)}}}-\psi\pa{\sqrt{\frac{p_{\theta_{1}}(X_{i}\et)}{p_{\theta_{3}}(X_{i}\et)}}}\le \kappa+1=b
\]
for $i\in\{1,\ldots,n\}$. Under our assumption that 
\begin{equation}\label{cond-c1c2c3}
\lambda\phi(\lambda (1+\overline \beta))\overline \beta a_{2}^{2}<a_{1}\iff \lambda\overline \beta\phi(\lambda (1+\overline \beta)) < \frac{1}{4}
\end{equation}
we obtain that $a_{1}>\lambda\phi(\lambda (1+\overline \beta))\overline \beta a_{2}^{2}\ge \lambda\phi(\lambda (1+\beta))\beta a_{2}^{2}$, which implies that the constants $c_{2}(\lambda,\kappa)$ defined by \eref{defc2} are  positive for every $\kappa\in \{\beta,\overline \beta\}$. As for $c_{3}(\lambda,\kappa)$ defined by \eref{defc3} we obtain that $c_{3}(\lambda,\overline \beta)\ge \lambda(a_{0}-a_{1})>0$ whilst under the condition 
\[
\frac{5\beta}{4}+\lambda(\beta^{2}+1)\phi( \lambda (1+\beta ))<\frac{1}{4}
\]
we obtain that $\beta a_{0}<a_{1}-\lambda\phi( \lambda (1+\beta ))a_{2}^{2}(\beta^{2}+1)$, hence $c_{3}(\lambda,\beta)$ is negative. 

Applying Lemma~\ref{lem-E-V} with $Y_{i}=X_{i}\et$ for $i\in\{1,\ldots,n\}$, $\overline P=P_{\theta\et}$, $P=P_{\theta_{1}}$, $Q=P_{\theta_{2}}$ and $R=P_{\theta_{3}}$ we deduce from \eref{eq-b2} that
\begin{align*}
&\log\E_{B}\cro{\exp\pa{\lambda \Delta_{1,\overline \beta}(\bsX,\gtheta)}} \\
&\le -\log\P(B)+ 2\lambda(\overline \beta+1)t+\cro{2c_{1}(\lambda,\overline \beta)+c_{2}(\lambda,\overline \beta)+2c_{3}(\lambda,\overline \beta)}nh^{2}(P\et,P_{\theta\et})\nonumber\\
&\hspace{1cm}+2c_{1}(\lambda,\overline \beta) nh^{2}(P_{\theta\et},P_{\theta_{1}})-\frac{c_{2}(\lambda,\overline \beta)}{2}n h^{2}(P_{\theta\et},P_{\theta_{2}})+2c_{3}(\lambda,\overline \beta) n h^{2}(P_{\theta\et},P_{\theta_{3}})
\end{align*}
and 
\begin{align*}
&\log\E_{B}\cro{\exp\pa{\lambda \Delta_{1,\beta}(\bsX,\gtheta)}} \\
&\le -\log\P(B)+2\lambda(\beta+1)t+\cro{2c_{1}(\lambda,\beta)+c_{2}(\lambda, \beta)+|c_{3}(\lambda, \beta)|}nh^{2}(P\et,P_{\theta\et})\nonumber\\
&\hspace{1cm}+2c_{1}(\lambda,\beta) nh^{2}(P_{\theta\et},P_{\theta_{1}})-\frac{c_{2}(\lambda,\beta)}{2}n h^{2}(P_{\theta\et},P_{\theta_{2}})+\frac{c_{3}(\lambda,\beta)}{2} n h^{2}(P_{\theta\et},P_{\theta_{3}}).
\end{align*}
This concludes the proof.

\subsection{Proof of Proposition~\ref{prop-Poisson}}\label{pf-prp-pois}
We prove that Assumption~\ref{Ass-CasDet} is satisfied at $\theta\et$ with $A_{0}(\lambda,\kappa,\gP\et,B,\theta\et)=\lambda (\kappa+1)t-\log \P(B)+\frc_{0}(\lambda,\kappa)\gH^{2}(\gnu\et,\gnu_{\theta\et})$ and the numerical constants $\frc_{i}(\lambda,\kappa)$ for $i\in\{0,1,2,3\}$ and $\kappa\in\{\beta,\overline \beta\}$ given in Section~\ref{prf-prop-densite}.

Let us now turn to the proof of Proposition~\ref{prop-Poisson}. 
Throughout this section, we write $\gT$ for the test statistic $\gT_{2}$ defined by \eref{eq-TPP}. Since $\etc{X\et}$ are $n$ Poisson processes with  intensity measures $\etc{\nu\et}$ respectively,  for $\bsX\et=(\etc{X\et})$ and $\theta,\theta'\in \Theta$
\begin{align*}
\gT(\bsX\et,\theta,\theta')-\E\cro{\gT(\bsX\et,\theta,\theta')}=\sum_{i=1}^{n}\cro{\int_{\cX}\psi\pa{\sqrt{\frac{\theta_{i}'}{\theta_{i}}}}dX_{i}\et-\int_{\cX}\psi\pa{\sqrt{\frac{\theta_{i}'}{\theta_{i}}}}d\nu_{i}\et}
\end{align*}
and for $\gtheta=(\theta_{1},\theta_{2},\theta_{3})\in \Theta^{3}$, we deduce that
\begin{align*}
&\Delta_{\kappa}(\bsX\et,\gtheta)-\E\cro{\Delta_{\kappa}(\bsX\et,\gtheta)}\\
&=\kappa\cro{\gT(\bsX\et,\theta_{3},\theta_{2})-\E\cro{\gT(\bsX\et,\theta_{3},\theta_{2})}}-\cro{\gT(\bsX\et,\theta_{3},\theta_{1})-\E\cro{\gT(\bsX\et,\theta_{3},\theta_{1})}}\\
&=\kappa\sum_{i=1}^{n}\cro{\int_{\cX}\psi\pa{\sqrt{\frac{\theta_{2,i}}{\theta_{3,i}}}}dX_{i}\et-\int_{\cX}\psi\pa{\sqrt{\frac{\theta_{2,i}}{\theta_{3,i}}}}d\nu_{i}\et}\\
&\quad -\sum_{i=1}^{n}\cro{\int_{\cX}\psi\pa{\sqrt{\frac{\theta_{1,i}}{\theta_{3,i}}}}dX_{i}\et-\int_{\cX}\psi\pa{\sqrt{\frac{\theta_{1,i}}{\theta_{3,i}}}}d\nu_{i}\et}.
\end{align*}
Setting, $\overline \psi_{\gtheta,i}=\kappa\psi\pa{\sqrt{\theta_{2,i}/\theta_{3,i}}}-\psi\pa{\sqrt{\theta_{1,i}/\theta_{3,i}}}$ for $i\in\{1,\ldots,n\}$, we derive that 
\begin{align}
\dot{\Delta}_{\kappa}(\bsX\et,\gtheta)&=\Delta_{\kappa}(\bsX\et,\gtheta)-\E\cro{\Delta_{\kappa}(\bsX\et,\gtheta)}=\sum_{i=1}^{n}\int_{\cX}\overline \psi_{\gtheta,i}\pa{dX_{i}\et-d\nu_{i}\et}.\label{def-Deltap}
\end{align}

It follows from Proposition~\ref{prop00}, more precisely \eref{eq-prop00}, with $a_0=5/2$, $a_1=1/2$, $a_2^2=2$,
\begin{align*}
\E\cro{\gT(\bsX\et,\theta_{3},\theta_{2})}&=\sum_{i=1}^{n}\E\cro{\int_{\cX}\psi\pa{\sqrt{\frac{\theta_{2,i}}{\theta_{3,i}}}}dX_{i}\et+\frac{1}{4}\pa{\int_{\cX}\theta_{3,i}d\mu-\int_{\cX}\theta_{2,i}d\mu}}\\
&=\sum_{i=1}^{n}\cro{\int_{\cX}\psi\pa{\sqrt{\frac{\theta_{2,i}}{\theta_{3,i}}}}d\nu_{i}\et+\frac{1}{4}\pa{\int_{\cX}\theta_{3,i}d\mu-\int_{\cX}\theta_{2,i}d\mu}}\\
&\le\ a_0\gH^{2}(\gnu\et,\gnu_{\theta_{3}})-a_1\gH^{2}(\gnu\et,\gnu_{\theta_{2}}).
\end{align*}
Using the fact that $\gT(\bsX\et,\theta,\theta')=-\gT(\bsX\et,\theta',\theta)$ for every $\theta,\theta'\in \Theta$, we also get  
\begin{align*}
-\E\cro{\gT(\bsX\et,\theta_{3},\theta_{1})}&=\E\cro{\gT(\bsX\et,\theta_{1},\theta_{3})}\le a_0 \gH^{2}(\gnu\et,\gnu_{\theta_{1}})- a_1\gH^{2}(\gnu\et,\gnu_{\theta_{3}}).
\end{align*}
Hence 
\begin{align}
\E\cro{\Delta_{\kappa}(\bsX\et,\gtheta)}&=\E\cro{\kappa\gT(\bsX\et,\theta_{3},\theta_{2})-\gT(\bsX\et,\theta_{3},\theta_{1})}\nonumber\\
&\le a_0\gH^{2}\pa{\gnu\et,\gnu_{\theta_{1}}}-a_1\kappa \gH^{2}\pa{\gnu\et,\gnu_{\theta_{2}}}+(a_0\kappa -a_1)\gH^{2}\pa{\gnu\et,\gnu_{\theta_{3}}}.\label{pfPP03}
\end{align}

We note that for every $i\in\{1,\ldots,n\}$
\begin{align*}
\overline \psi_{\gtheta,i}^{2}&=\cro{\kappa\psi\pa{\sqrt{\frac{\theta_{2,i}}{\theta_{3,i}}}}-\psi\pa{\sqrt{\frac{\theta_{1,i}}{\theta_{3,i}}}}}^{2}\le 2\cro{\kappa^{2}\psi^{2}\pa{\sqrt{\frac{\theta_{2,i}}{\theta_{3,i}}}}+\psi^{2}\pa{\sqrt{\frac{\theta_{1,i}}{\theta_{3,i}}}}}
\end{align*}
and it follows thus from inequality \eref{eq-prop01} in Proposition~\ref{prop00} that
\begin{align}
\int_{\cX}\overline \psi_{\gtheta,i}^{2}d\nu_{i}\et&\le 2\kappa^{2}\int_{\cX}\psi^{2}\pa{\sqrt{\frac{\theta_{2,i}}{\theta_{3,i}}}}d\nu_{i}\et+2\int_{\cX}\psi^{2}\pa{\sqrt{\frac{\theta_{1,i}}{\theta_{3,i}}}}d\nu_{i}\et\nonumber\\
&\le 2\kappa^{2}\cro{a_2^2\pa{H^{2}(\nu_{i}\et,\theta_{3,i}\cdot\mu)+H^{2}(\nu_{i}\et,\theta_{2,i}\cdot\mu)}}\nonumber\\
&\quad +2\cro{a_2^2\pa{H^{2}(\nu_{i}\et,\theta_{3,i}\cdot\mu)+H^{2}(\nu_{i}\et,\theta_{1,i}\cdot\mu)}}\nonumber\\
&=2a_2^2\cro{H^{2}(\nu_{i}\et,\nu_{\theta_{1},i})+\kappa^{2}H^{2}(\nu_{i}\et,\nu_{\theta_{2},i})+(\kappa^{2}+1)H^{2}(\nu_{i}\et,\nu_{\theta_{3},i})}.\label{pfPP04}
\end{align}

For every $i\in\{1,\ldots,n\}$, we may apply  \eref{eq-caraPoisson} to the Poisson process $X_{i}\et$ with $f=
\lambda\overline \psi_{\gtheta,i}$ and get 
\begin{align*}
\E\cro{\exp\pa{\int_{\cX}\lambda\overline \psi_{\gtheta,i}\pa{dX_{i}\et-d\nu_{i}\et}}}&=\exp\cro{\int_{\cX}\pa{e^{\lambda\overline \psi_{\gtheta,i}}-\lambda\overline \psi_{\gtheta,i}-1}d\nu_{i}\et}\\
&=\exp\cro{\int_{\cX}\frac{\lambda^{2}\overline \psi_{\gtheta,i}^{2}}{2}\phi\pa{\lambda\overline \psi_{\gtheta,i}}d\nu_{i}\et}\\
&\le \exp\cro{\int_{\cX}\frac{\lambda^{2}\overline \psi_{\gtheta,i}^{2}}{2}\phi\pa{\lambda(\kappa+1)}d\nu_{i}\et}
\end{align*}
since $\phi$ is increasing and $\lambda \overline \psi_{\gtheta}$ is not larger than $\lambda(\kappa+1)$. Using now that the $X_{i}\et$ are independent, we derive from \eref{def-Deltap} that
\begin{align}
\E\cro{\exp\pa{\lambda\dot{\Delta}_{\kappa}(\bsX\et,\gtheta)}}&\le \exp\cro{\frac{\lambda^{2}\phi\pa{\lambda(\kappa+1)}}{2}\sum_{i=1}^{n}\int_{\cX}\overline \psi_{\gtheta,i}^{2}d\nu_{i}\et}\label{pfPP01}
\end{align}
hence, 
\begin{align}
\log\E\cro{\exp\pa{\lambda\Delta_{\kappa}(\bsX\et,\gtheta)}}&=\log\E\cro{\exp\pa{\lambda\E\cro{\Delta_{\kappa}(\bsX\et,\gtheta)}+\lambda\dot{\Delta}_{\kappa}(\bsX\et,\gtheta)}}\nonumber\\
&\le \lambda \E\cro{\Delta_{\kappa}(\bsX\et,\gtheta)}+\frac{\lambda^{2}\phi\pa{\lambda(\kappa+1)}}{2}\sum_{i=1}^{n}\int_{\cX}\overline \psi_{\gtheta,i}^{2}d\nu_{i}\et,\label{pfPP02}
\end{align}
which, with \eref{pfPP03} and \eref{pfPP04}, gives 
\begin{align}
&\log\E\cro{\exp\pa{\lambda\Delta_{\kappa}(\bsX\et,\gtheta)}}\nonumber\\
&\le  \lambda\cro{a_0 \gH^{2}\pa{\gnu\et,\gnu_{\theta_{1}}}-a_1 \kappa \gH^{2}\pa{\gnu\et,\gnu_{\theta_{2}}} + 
(a_0 \kappa - a_1)\gH^{2}\pa{\gnu\et,\gnu_{\theta_{3}}}}\nonumber\\
&\quad + a_2^2 \lambda^{2}\phi\pa{\lambda(\kappa+1)}\cro{\gH^{2}(\gnu\et,\gnu_{\theta_{1}})+\kappa^{2}\gH^{2}(\gnu\et,\gnu_{\theta_{2}})+(\kappa^{2}+1)\gH^{2}(\gnu\et,\gnu_{\theta_{3}})}\nonumber\\
&\le \lambda\pa{a_0 + a_2^2\lambda\phi\pa{\lambda(\kappa+1)}}\gH^{2}(\gnu\et,\gnu_{\theta_{1}})-
\lambda\kappa\pa{a_1-a_2^2\lambda \kappa\phi\pa{\lambda(\kappa+1)}}\gH^{2}\pa{\gnu\et,\gnu_{\theta_{2}}}\label{pfPP05}\\
&\quad +\lambda\pa{a_0 \kappa - a_1+ a_2^2\lambda(\kappa^{2}+1)\phi\pa{\lambda(\kappa+1)}}\gH^{2}\pa{\gnu\et,\gnu_{\theta_{3}}}.\nonumber
\end{align}

Since $|\overline \psi_{\gtheta}|$ is bounded by $\kappa+1$, on the set $B$  we get
\begin{align}
\Delta_{\kappa}(\bsX,\gtheta)-\Delta_{\kappa}(\bsX\et,\gtheta)&=\sum_{i=1}^{n}\int_{\cX}\overline \psi_{\gtheta,i}\pa{dX_{i}-dX_{i}\et}\le (\kappa+1)N\le (\kappa+1)t.\label{pfPP06}
\end{align}
Noting that 
\begin{align*}
\log\sL_{\kappa}(\gtheta,\lambda|B)&=\log\E_{B}\cro{\exp\pa{\lambda \Delta_{\kappa}(\bsX,\gtheta)}}\\
&=\log\E_{B}\cro{\exp\pa{\lambda \Delta_{\kappa}(\bsX,\gtheta)-\lambda \Delta_{\kappa}(\bsX\et,\gtheta)+\lambda \Delta_{\kappa}(\bsX\et,\gtheta)}}\\
&\le \lambda(\kappa+1) t+\log\E_{B}\cro{\exp\pa{\lambda \Delta_{\kappa}(\bsX\et,\gtheta)}}\\
&\le \lambda (\kappa+1)t-\log \P(B)+\log\E\cro{\exp\pa{\lambda \Delta_{\kappa}(\bsX\et,\gtheta)}}
\end{align*}
we derive from \eref{pfPP05}  that
\begin{align*}
\log\sL_{\kappa}(\gtheta,\lambda|B)
&\le \lambda (\kappa+1)t-\log \P(B)\\
&\quad +\lambda\pa{a_0+a_2^2\lambda\phi\pa{\lambda(\kappa+1)}}\gH^{2}(\gnu\et,\gnu_{\theta_{1}})\\
&\hspace{1cm} -\lambda\kappa\pa{a_1-a_2^2\lambda \kappa\phi\pa{\lambda(\kappa+1)}}\gH^{2}\pa{\gnu\et,\gnu_{\theta_{2}}}\\
&\hspace{2cm} +\lambda\pa{a_0 \kappa - a_1+ a_2^2\lambda(\kappa^{2}+1)\phi\pa{\lambda(\kappa+1)}}\gH^{2}\pa{\gnu\et,\gnu_{\theta_{3}}}.
\end{align*}
Under our constraints \eref{eq-constraints1} on $\lambda,\beta$, the following inequalities hold:
\begin{equation}\label{eq-condP}
\begin{cases}
&\dps{
\frac{1}{2}-2\lambda \beta\phi\pa{\lambda(\beta+1)}
>
\frac{1}{2}-2\lambda \overline \beta\phi\pa{\lambda(\overline \beta+1)}
>0
}\\[1.2ex]
&\dps{
\frac{5\overline \beta}{2}-\frac{1}{2}
+2\lambda(\overline \beta^{2}+1)\phi\pa{\lambda(\overline \beta+1)}
>0
}\\[1.2ex]
&\dps{
\frac{5\beta}{2}-\frac{1}{2}
+2\lambda(\beta^{2}+1)\phi\pa{\lambda(\beta+1)}
<0 .
}
\end{cases}
\end{equation}
and we conclude the proof by using the two inequalities below that are consequences of the triangular inequality. For every $\theta\in \Theta$, 
\begin{align*}
\frac{1}{2}\gH^{2}(\gnu_{\theta\et},\gnu_{\theta})-\gH^{2}(\gnu\et,\gnu_{\theta\et})\le \gH^{2}(\gnu\et,\gnu_{\theta})\le 2\gH^{2}(\gnu\et,\gnu_{\theta\et})+2\gH^{2}(\gnu_{\theta\et},\gnu_{\theta}).
\end{align*}

\subsection{Proof of Proposition~\ref{prop-piex}}
The proof relies on the following lemmas that are proven in Sections~\ref{pf-lem4} and \ref{pf-lem5} respectively. 
\begin{lem}\label{UnitSphere}
Let $s\et$ be in the unit sphere of a Hilbert space $\H$ equipped with its norm $\norm{\cdot}$, let $V\ne \{0\}$ be a closed linear subspace of $\H$ and let $S$ denote the unit sphere of $V$. Then 
\[
\inf_{s\in S}\norm{s\et-s}^{2}\le 2\inf_{v\in V}\norm{s\et-v}^{2}.
\]
\end{lem}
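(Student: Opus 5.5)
Let $v_0$ be the orthogonal projection of $s\et$ onto $V$; it exists because $V$ is closed. Then $\inf_{v\in V}\norm{s\et-v}^{2}=\norm{s\et-v_0}^{2}$, and by Pythagoras $\norm{s\et-v_0}^{2}=1-\norm{v_0}^{2}$, since $\norm{s\et}=1$. The plan is to exhibit a single unit vector $s\in S$ with $\norm{s\et-s}^{2}\le 2(1-\norm{v_0}^{2})$.

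There are two cases. If $v_0=0$, take any $s\in S$, which is possible because $V\ne\{0\}$. Then $\norm{s\et-s}^{2}=2-2\scal{s\et}{s}$. Since $s\in V$ and $s\et-v_0=s\et\perp V$, we have $\scal{s\et}{s}=0$. Hence $\norm{s\et-s}^{2}=2=2\norm{s\et-v_0}^{2}$, and the bound holds with equality.

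If $v_0\ne 0$, take $s=v_0/\norm{v_0}\in S$. We have $\scal{s\et}{v_0}=\norm{v_0}^{2}$ because $s\et-v_0\perp v_0$. Therefore
\[
\norm{s\et-s}^{2}=2-2\scal{s\et}{s}=2-2\norm{v_0}.
\]
Write $t=\norm{v_0}\in(0,1]$. It remains to check $2-2t\le 2(1-t^{2})$, which is equivalent to $t^{2}\le t$. This holds for $t\in[0,1]$, and $t\le 1$ because $\norm{v_0}\le\norm{s\et}=1$ for a projection.

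No step is a real obstacle. The only care needed is the degenerate case $v_0=0$, which is exactly where the hypothesis $V\ne\{0\}$ is used, and the observation that closedness of $V$ guarantees the projection exists.
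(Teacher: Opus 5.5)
Your proof is correct and follows the paper's argument essentially verbatim. Both project $s\et$ onto $V$, normalise the projection $s'$ (or take any unit vector of $V$ when $s'=0$), and compare $2(1-\norm{s'})$ with $2(1-\norm{s'}^{2})=2\norm{s\et-s'}^{2}$. The paper writes the last step as $2(1-\norm{s'})=2\norm{s\et-s'}^{2}/(1+\norm{s'})$ with both cases handled at once, while you split off the case $v_0=0$ and reduce to $t^{2}\le t$, which is the same inequality.
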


\begin{lem}\label{lem-sphere}
Let $\nu_{D}$ be the uniform distribution on the sphere $\S_{D}$ of $\R^{D}$ where $D\ge 1$.  
For each $D\ge 2$ and every $u\in\S_{D}$ and $t\in [0,2]$, 
\begin{equation}\label{eq-cap0}
 \nu_{D}\pa{\ac{v\in\S_{D},\; \ab{u-v}\le t}}=\frac{\Gamma(2\alpha)}{\cro{\Gamma\pa{\alpha}}^{2}}\int_{0}^{t^{2}/4}z^{\alpha-1}(1-z)^{\alpha-1}dz
\end{equation}
with  $\alpha=(D-1)/2$. In particular, for $D=2$ and $t\in [0,2]$
\begin{equation}\label{eq-cap1}
\frac{t}{\pi}\le  \nu_{2}\pa{\ac{v\in\S_{2},\; \ab{u-v}\le t}}=\frac{2}{\pi}\arcsin\pa{\frac{t}{2}}\le \frac{t}{2}
\end{equation}
and for every $D\ge 3$ and $t\in [0,\sqrt{2}]$, 
\begin{equation}\label{eq-cap2}
\sqrt{\frac{2}{\pi D}}\pa{\frac{t}{\sqrt{2}}}^{D-1}\le  \nu_{D}\pa{\ac{v\in\S_{D},\; \ab{u-v}\le t}}\le \frac{t^{D-1}}{\sqrt{2\pi(D-1)}}.
\end{equation}

For every $D\ge 1$, every $u\in \S_D$, and every
$t>0$,
\begin{equation}\label{eq-cap3}
  \frac{2^{-D/2}}{\sqrt{\pi D}}\,(t\wedge 1)^{D-1}
\le 
\nu_D(\{v\in \S_D:\ |u-v|\le t\}).
\end{equation}
\end{lem}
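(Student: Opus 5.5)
The proof reduces everything to the law of the inner product $\scal{u}{v}$ when $v\sim\nu_{D}$, and then to elementary bounds on a Beta integral and on ratios of Gamma functions.

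\textbf{Proof of \eref{eq-cap0}.} Since $\ab{u}=\ab{v}=1$, we have $\ab{u-v}^{2}=2-2\scal{u}{v}$. Hence $\ab{u-v}\le t$ if and only if $Z=(1-\scal{u}{v})/2\le t^{2}/4$. By rotation invariance of $\nu_{D}$, the law of $X=\scal{u}{v}$ does not depend on $u$. For $D\ge 2$ it has the classical density proportional to $(1-x^{2})^{(D-3)/2}$ on $[-1,1]$; I would derive this by representing $v=G/\ab{G}$ with $G$ a standard Gaussian vector, or by the slice/coarea formula. The change of variables $z=(1-x)/2$ turns this into the density proportional to $z^{\alpha-1}(1-z)^{\alpha-1}$ on $[0,1]$ with $\alpha=(D-1)/2$, that is, the Beta$(\alpha,\alpha)$ law. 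Its normalising constant is $\Gamma(2\alpha)/\Gamma(\alpha)^{2}$, which gives \eref{eq-cap0}.

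\textbf{Case $D=2$.} Here $\alpha=1/2$, and the substitution $z=\sin^{2}\varphi$ integrates \eref{eq-cap0} exactly to $(2/\pi)\arcsin(t/2)$. The two bounds in \eref{eq-cap1} then follow from $x\le\arcsin x\le \pi x/2$ on $[0,1]$. The upper inequality holds because $\arcsin$ is convex on $[0,1]$ and agrees with the chord $\pi x/2$ at the endpoints.

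\textbf{Case $D\ge 3$.} Now $\alpha\ge 1$. For $t\le\sqrt2$ we have $z\le t^{2}/4\le 1/2$, so $2^{1-\alpha}\le (1-z)^{\alpha-1}\le 1$ on the range of integration. The integral is therefore sandwiched between $2^{1-\alpha}(t^{2}/4)^{\alpha}/\alpha$ and $(t^{2}/4)^{\alpha}/\alpha$. The constant in front is handled with the duplication formula $\Gamma(2\alpha)=2^{2\alpha-1}\Gamma(\alpha)\Gamma(\alpha+1/2)/\sqrt\pi$. With it, the cap measure lies between $2^{1-\alpha}$ times and $1$ times
\[
\frac{t^{D-1}}{2\sqrt\pi}\,\frac{\Gamma(\alpha+1/2)}{\Gamma(\alpha+1)}.
\]
Gautschi's inequality gives $\alpha^{1/2}\le \Gamma(\alpha+1)/\Gamma(\alpha+1/2)\le(\alpha+1/2)^{1/2}$, with $\alpha+1/2=D/2$. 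The upper bound yields $t^{D-1}/\sqrt{2\pi(D-1)}$. The lower bound yields $2^{-\alpha}t^{D-1}\sqrt{2/(\pi D)}=\sqrt{2/(\pi D)}\,(t/\sqrt2)^{D-1}$, which is \eref{eq-cap2}. This Gamma-ratio step is where the constants must be matched carefully, and I expect it to be the main obstacle, though it is routine.

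\textbf{Proof of \eref{eq-cap3}.} By monotonicity in $t$, it suffices to bound the measure at $t\wedge 1$.
\begin{itemize}
\item For $D=1$, the sphere is $\{\pm1\}$ and the cap contains $u$. Its mass is at least $1/2\ge 2^{-1/2}/\sqrt\pi$.
\item For $D=2$, \eref{eq-cap1} gives at least $(t\wedge1)/\pi$, which exceeds $(t\wedge 1)/(2\sqrt{2\pi})$.
\item For $D\ge3$, the lower bound in \eref{eq-cap2} at $t\wedge1\le\sqrt2$ gives $\sqrt{2/(\pi D)}\,2^{-(D-1)/2}(t\wedge1)^{D-1}=2\cdot 2^{-D/2}(t\wedge1)^{D-1}/\sqrt{\pi D}$. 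This is twice the required bound.
\end{itemize}
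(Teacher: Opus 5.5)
Your proposal is correct and follows essentially the same route as the paper. The paper likewise uses the Beta$(\alpha,\alpha)$ law of $(1-\scal{u}{v})/2$ and the sandwich $2^{1-\alpha}\le (1-z)^{\alpha-1}\le 1$ for $t\le\sqrt2$. It bounds $\Gamma(\alpha+1)/\Gamma(\alpha+1/2)$ from both sides and deduces \eref{eq-cap3} case by case exactly as you do; you additionally justify the bounds in \eref{eq-cap1}, which the paper leaves implicit.

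One small point of attribution. The upper bound $(\alpha+1/2)^{1/2}$ you need is Wendel's refinement, which the paper cites (it also follows from log-convexity of $\Gamma$). The usual statement of Gautschi's inequality has $(\alpha+1)^{1/2}$ instead, and that would not give the constant $\sqrt{2/(\pi D)}$ in \eref{eq-cap2}.
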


Note that Inequality \eref{eq-prop-approx} is a direct consequence of  \eref{H-st} and Lemma~\ref{UnitSphere}.

Now let $(\rho,a,s)\in\Gamma_{J,m}$. Then $s$ can be written as $\sum_{j=1}^{D_{m}}s_{j}\phi_{j,m}\in S_{m}$. If $(\varrho,b,t)\in\Gamma_{J,m}$ satisfy 
\[
\ab{\rho-\varrho}^{2}\le \frac{2r}{9n},\; \ab{a-b}^{2}\le\frac{2r}{9n\rho^{2}}\quad \text{and}\quad \norm{s-t}^{2}\le \frac{2r}{9n\rho^{2}},
\]
it follows from \eref{eq-H-E} that 
\begin{align*}
\ell(\theta_{\rho,a,s},\theta_{\varrho,b,t})=\gH^{2}\pa{\gnu_{\rho,a,s},\gnu_{\varrho,b,t}}&\le \frac 32 n\cro{\rho^{2}\frac{2r}{9n\rho^{2}}+\rho^{2}\frac{2r}{9n\rho^{2}}+\frac{2r}{9n}}= r.
\end{align*}

Let $R,\bsY_{\!\!J},\bsZ_{m}$ be the random variables defined in Section~\ref{sect-ex}. 
It follows from the definition of $\gpi_{J,m}$ and the identity 
\[
\norm{s-\sum_{j=1}^{D_{m}}Z_{j,m}\phi_{j,m}}^{2}=\ab{\gs-\bsZ_{m}}^{2}\quad \text{with}\quad \gs=(s_{1},\ldots,s_{D_{m}})
\]
that
\begin{align*}
\gpi_{J,m}\pa{\sB(\theta_{\rho,a,s},r)}&\ge \P\cro{\ab{R-\rho}^{2}\le \frac{2r}{9n},\; \ab{a-\bsY_{\! \!J}}^{2}\le\frac{2r}{9n\rho^{2}},\ab{\gs-\bsZ_{m}}^{2}\le \frac{2r}{9n\rho^{2}}}\\
&= \P\cro{\ab{R-\rho}\le \sqrt{\frac{2r}{9n}}}\P\cro{\ab{\bsY_{\!\! J}-a}\le\sqrt{\frac{2r}{9n\rho^{2}}}}\P\cro{\ab{\bsZ_{m}-\gs}\le \sqrt{\frac{2r}{9n\rho^{2}}}}.
\end{align*}

Let us first note that for every $\rho,\zeta>0$, 
\begin{align*}
\P\cro{\ab{R-\rho}\le \zeta}\ge \frac{2}{\pi}\int_{\rho}^{\rho+\zeta}\frac{dx}{1+x^{2}}\ge \frac{2}{\pi}\int_{\rho}^{\rho+\zeta}\frac{dx}{(1+x)^{2}}=2\cro{\pi(1+\rho)(1+(1+\rho)\zeta^{-1})}^{-1}.
\end{align*}
This yields the inequality 
\begin{equation}\label{eq-uni1}
\P\cro{\ab{R-\rho}\le \sqrt{\frac{2r}{9n}}}\ge \frac{2}{\pi}\cro{(1+\rho)\pa{1+(1+\rho)\sqrt{\frac{9n}{2r}}}}^{-1}.
\end{equation}

Using \eqref{eq-cap3}, we obtain, with
$\varepsilon=\sqrt{\frac{2r}{9n\rho^2}}$
that
\begin{align}
\P\cro{\ab{\bsY_{\!\! J}-a}\le\varepsilon}
&\ge
\frac{2^{-|J|/2}}{\sqrt{\pi |J|}}
(\varepsilon\wedge1)^{|J|-1},\label{eq-uniS-new}\\
\P\cro{\ab{\bsZ_m-\gs}\le\varepsilon}
&\ge
\frac{2^{-D_m/2}}{\sqrt{\pi D_m}}
(\varepsilon\wedge1)^{D_m-1}.\label{eq-uniD-new}
\end{align}
Putting together inequalities \eref{eq-uni1}, \eref{eq-uniS-new} and
\eref{eq-uniD-new}, and setting \(d=|J|+D_m\), we get
\begin{align*}
-\log\cro{\gpi_{J,m}\pa{\sB(\theta_{\rho,a,s},r)}}
&\le
\log\pa{\frac{\pi^{2}(1+\rho)\sqrt{|J|D_m}}{2}}
+\frac d2\log 2
+\log\pa{1+(1+\rho)\sqrt{\frac{9n}{2r}}}\\
&\quad
+\pa{\frac d2-1}
\log\pa{\frac{9n\rho^2}{2r}\vee1}.
\end{align*}
Since
$
\frac{9n\rho^2}{2r}
\le
(1+\rho)^2\frac{9n}{2r}
$
we have
$$
\log\pa{\frac{9n\rho^2}{2r}\vee1}
\le
2\log\pa{1+(1+\rho)\sqrt{\frac{9n}{2r}}},
$$
and therefore
\begin{align*}
-\log\cro{\gpi_{J,m}\pa{\sB(\theta_{\rho,a,s},r)}}
&\le
d\log\pa{1+(1+\rho)\sqrt{\frac{9n}{2r}}}
+\frac d2\log 2
+\log\pa{\frac{\pi^{2}(1+\rho)\sqrt{|J|D_m}}{2}}\\
&=
d\log\pa{\sqrt2\pa{1+(1+\rho)\sqrt{\frac{9n}{2r}}}}
+\log\pa{\frac{\pi^{2}(1+\rho)\sqrt{|J|D_m}}{2}}.
\end{align*}

Let us define
\begin{align*}
\overline r
&=
\frac{|J|+D_m}{\gamma}
\log\pa{
e + 3(1+\rho)\sqrt{\frac{\gamma n}{(|J|+D_m)\vee L_{J,m}}}
}
+\frac{1}{\gamma}
\log\pa{\frac{\pi^{2}(1+\rho)\sqrt{|J|D_m}}{2}}
+\frac{L_{J,m}}{\gamma}.
\end{align*}
If $r\ge \overline r$, then $r\ge \frac{(|J|+D_m)\vee L_{J,m}}{\gamma}$
and
$$
(1+\rho)\sqrt{\frac{9n}{2r}}
\le
\frac{3}{\sqrt 2}(1+\rho)\sqrt{\frac{\gamma n}{(|J|+D_m)\vee L_{J,m}}}.
$$
Writing $x=(1+\rho)\sqrt{\frac{\gamma n}{(|J|+D_m)\vee L_{J,m}}}$, we get
$$
\sqrt 2 \pa{1+(1+\rho)\sqrt{\frac{9n}{2r}}}
\le \sqrt 2 \pa{1+\frac{3}{\sqrt 2}x}
=\sqrt 2 + 3x
\le e + 3x.
$$
Therefore
$$
-\log\cro{\gpi_{J,m}\pa{\sB(\theta_{\rho,a,s},r)}}
\le
\gamma\overline r-L_{J,m}.
$$
Consequently,
\begin{align*}
-\log\cro{\gpi\pa{\sB(\theta_{\rho,a,s},r)}}
&=-\log\cro{\sum_{(J',m')\in\cJ\times \cM}e^{-L_{J',m'}}\gpi_{J',m'}\pa{\sB(\theta_{\rho,a,s},r)}}
\\
&\le
-\log\cro{\gpi_{J,m}\pa{\sB(\theta_{\rho,a,s},r)}}+L_{J,m}
\le
\gamma\overline r\le \gamma r.
\end{align*}
This proves the claimed bound on \(\overline r(\gpi,\theta_{\rho,a,s})\).

\subsection{Proof of Lemma~\ref{UnitSphere}}\label{pf-lem4}
Let $s'$ be the orthogonal projection of $s\et$ onto $V$. We define $s$ as $s=s'/\norm{s'}$ when $\norm{s'}>0$ and $s$ is an arbitrary unit vector in $V$ otherwise. Then, 
\begin{align*}
\norm{s\et-s}^{2}=2\pa{1-\norm{s'}}=\frac{2\pa{\norm{s\et}^{2}-\norm{s'}^{2}}}{1+\norm{s'}}=\frac{2\norm{s\et-s'}^{2}}{1+\norm{s'}}\le 2\norm{s\et-s'}^{2}.
\end{align*}

\subsection{Proof of Lemma~\ref{lem-sphere}}\label{pf-lem5}
Fix some $D\geq 2$ and
let $\widehat v$ be a random variable with distribution $ \nu_{D}$. The random variable $\scal{u}{\widehat v}$ admits a density with respect to the Lebesgue measure given by 
\[
q_{D}(x)=c_{D}\pa{1-x^{2}}^{(D-3)/2}\1_{[-1,1]}\quad \text{with}\quad c_{D}=2^{2-D}\frac{\Gamma(D-1)}{\cro{\Gamma\pa{(D-1)/2}}^{2}}=2^{1-2\alpha}\frac{\Gamma(2\alpha)}{\cro{\Gamma\pa{\alpha}}^{2}}.
\]
Since $\ab{u}=\ab{\widehat v}=1$, 
\[
\ab{u-\widehat v}\le t\iff 2\pa{1-\scal{u}{\widehat v}}\le t^{2}\iff \scal{u}{\widehat v}\ge 1-\frac{t^{2}}{2}
\]
and we obtain that for $t\in [0,2]$
\begin{align}
 \nu_{D}\pa{\ac{v\in\S_{D},\; \ab{u-v}\le t}}&=\P\cro{\scal{u}{\widehat v}\ge 1-\frac{t^{2}}{2}}=c_{D}\int_{1-t^{2}/2}^{1}\pa{1-x^{2}}^{\alpha-1}dx.\label{lem1-eq0}
\end{align}
For $D=2$, we get that for every $t\in [0,2]$
\begin{align*}
 \nu_{D}\pa{\ac{v\in\S_{D},\; \ab{u-v}\le t}}&=\frac{1}{\pi}\int_{1-t^{2}/2}^{1}\frac{dx}{\sqrt{1-x^{2}}}=\frac{1}{\pi}\cro{\arcsin(x)}_{1-t^{2}/2}^{1}\\
&=\frac{1}{\pi}\cro{\frac{\pi}{2}-\arcsin\pa{1-\frac{t^{2}}{2}}}=\frac{1}{\pi}\arccos\pa{1-\frac{t^{2}}{2}}\\
&=\frac{2}{\pi}\arcsin\pa{\frac{t}{2}}.
\end{align*}
In the general case, by doing the change of variables $x=1-2z$ we get 
\begin{align}
\int_{1-t^{2}/2}^{1}\pa{1-x^{2}}^{\alpha-1}dx&=\int_{0}^{t^{2}/4}\pa{1-\pa{1-2z}^{2}}^{\alpha-1}2dz=2\int_{0}^{t^{2}/4}\cro{4z(1-z)}^{\alpha-1}dz\nonumber\\
&=2^{2\alpha-1}\int_{0}^{t^{2}/4}z^{\alpha-1}(1-z)^{\alpha-1}dz\label{Lem1-eq1}
\end{align}
which with \eref{lem1-eq0} gives \eref{eq-cap0}. 

Furthermore, the following inequalities (see Wendel~\citeyearpar{MR29448})
\[
z^{1-s}\le \frac{\Gamma(z+1)}{\Gamma(z+s)}\le \pa{z+s}^{1-s}
\]
hold for every $z>0$ and $s\in (0,1)$. Applying them with $s=1/2$ we obtain that 
\begin{align}
\frac{\alpha}{\sqrt{\pi (\alpha+1/2)}}\le c_{D}=\frac{\alpha\Gamma(\alpha+1/2)}{\Gamma\pa{\alpha+1}\sqrt{\pi}}\le \sqrt{\frac{\alpha}{\pi}}\label{lem1-eq2}.
\end{align}

Let us now assume that $t\in [0,\sqrt{2}]$ and $D\ge 3$. Then $\alpha\ge 1$ and every $z\in [0,t^{2}/4]$ satisfies $1\ge 1-z\ge 1-t^{2}/4\ge 1/2$. We deduce that 
\begin{align}
\frac{1}{\alpha}\pa{\frac{t}{\sqrt{2}}}^{2\alpha}\le 2^{2\alpha-1}\int_{0}^{t^{2}/4}z^{\alpha-1}(1-z)^{\alpha-1}dz\le \frac{t^{2\alpha}}{2\alpha}.\label{lem1-eq4}
\end{align}
Inequalities \eref{lem1-eq0}, \eref{Lem1-eq1}, \eref{lem1-eq2} and \eref{lem1-eq4} yield for $D\ge 3$ and $t\in [0,\sqrt{2}]$
\begin{align*}
\frac{1}{\sqrt{\pi(\alpha+1/2)}}\pa{\frac{t}{\sqrt{2}}}^{2\alpha}\le  \nu_{D}\pa{\ac{v\in\S_{D},\; \ab{u-v}\le t}}\le \frac{t^{2\alpha}}{2\sqrt{\pi \alpha}},
\end{align*}
which is \eref{eq-cap2}.

We turn to the proof of \eref{eq-cap3}. For $D=1$, the sphere is $\{-1,1\}$ and for every $t>0$ we have
$
\nu_1(\{v\in \S_1: |u-v|\le t\})\ge \frac12
\ge \frac{1}{\sqrt{2\pi}}
$.
For $D=2$ and $0<t\le 1$, \eref{eq-cap1} yields
$$
\nu_2(\{v\in \S_2: |u-v|\le t\})
\ge \frac{t}{\pi}
\ge \frac{2^{-1}}{\sqrt{2\pi}}t .
$$
For $t>1$, monotonicity in $t$ yields the same bound with $t$ replaced by $1$.
For $D\ge 3$ and $0 < t \le 1$, \eref{eq-cap2} implies that
\[
\nu_D(\{v\in \S_D: |u-v| \le t\})
\ge
\sqrt{\frac{2}{\pi D}}\left(\frac{t}{\sqrt2}\right)^{D-1}
=
\frac{2^{1-D/2}}{\sqrt{\pi D}} t^{D-1}
\ge
\frac{2^{-D/2}}{\sqrt{\pi D}} t^{D-1}.
\]
For $t>1$, monotonicity again gives the bound with $t \wedge 1$.

\bibliographystyle{apalike}

\end{document}